
\documentclass[pdflatex]{sn-jnl}






\usepackage[english]{babel}
\usepackage{caption,subcaption,float,blindtext,tabu,mathtools,nicefrac,colonequals,csquotes,xcolor,multicol,longtable}

\usepackage{stmaryrd}
\usepackage[normalem]{ulem}

\renewcommand{\R}{\mathbb{R}}
\renewcommand{\Q}{\mathbb{Q}}
\renewcommand{\Z}{\mathbb{Z}}

\let\int\relax
\DeclareMathOperator{\int}{int}
\DeclareMathOperator{\conv}{conv}

\newcommand{\I}{\mathcal{I}}

\renewcommand{\A}{\mathcal{A}}
\newcommand{\proj}{\textnormal{proj}}
\newcommand{\LB}{\textnormal{LB}}
\newcommand{\UB}{\textnormal{UB}}
\newcommand{\lb}{\textnormal{lb}}

\newcommand{\ALG}{\mathtt{ALG}}

\newcommand{\st}{ :}

\newcommand{\norm}[1]{\left\lVert#1\right\rVert}
\hyphenation{FPTAS}

\newcommand{\Ball}[3]{B_{#2,#3}(#1)}
\newcommand{\preGrid}{\tilde{\Lambda}^{\textup{Grid}}}
\newcommand{\Grid}{\Lambda^{\textup{Grid}}}
\newcommand{\Compact}{\Lambda^{\textup{compact}}}
\newcommand{\flag}{bool}

\newcommand{\cInd}{\textup{Ind}^{\textup{c}}}

\newcommand{\DYAlg}{\texttt{D\&Y}}
\newcommand{\GridAlg}{\texttt{GRID}}
\newcommand{\FPTOAAlg}{\texttt{OAA}}

\newcommand{\Chris}{\texttt{Christofides}}

\newcommand{\ExtGreedy}{\texttt{Extended Greedy}}

\newcommand{\git}{\url{https://gitlab.rlp.net/shelfric/molib}}


\jyear{2022}%

\theoremstyle{thmstyleone}%
\newtheorem{theorem}{Theorem}[section]
\newtheorem{proposition}[theorem]{Proposition}%
\newtheorem{corollary}[theorem]{Corollary}
\newtheorem{lemma}[theorem]{Lemma}
\newtheorem{observation}[theorem]{Observation}
\newtheorem{property}{Property}

\theoremstyle{thmstyletwo}%

\theoremstyle{thmstylethree}%
\newtheorem{definition}[theorem]{Definition}%
\newtheorem{assumption}[theorem]{Assumption}
\raggedbottom

\usepackage{natbib}
\bibliographystyle{abbrvnat}
\setcitestyle{authoryear} 

\begin{document}

\title[Constructing Convex Approximation Sets]{Efficiently Constructing Convex Approximation Sets in Multiobjective Optimization Problems}

\author*[1]{\fnm{Stephan} \sur{Helfrich}}\email{helfrich@mathematik.uni-kl.de}

\author[1]{\fnm{Stefan} \sur{Ruzika}}\email{ruzika@mathematik.uni-kl.de}

\author[2,3]{\fnm{Clemens} \sur{Thielen}}\email{clemens.thielen@tum.de}

\affil*[1]{\orgdiv{Department of Mathematics}, \orgname{RPTU Kaiserslautern-Landau}, \orgaddress{\street{Paul-Ehrlich-Str.~14}, \city{Kaiserslautern}, \postcode{67663}, \country{Germany}}}

\affil[2]{\orgdiv{TUM Campus Straubing for Biotechnology and Sustainability}, \orgname{Weihenstephan-Triesdorf University of Applied Sciences}, \orgaddress{\street{Am~Essigberg~3}, \city{Straubing}, \postcode{94315}, \country{Germany}}}
\affil[3]{\orgdiv{Department of Mathematics, School of Computation, Information and Technology}, \orgname{Technical University of Munich}, \orgaddress{\street{Boltzmannstr.~3}, \city{Garching bei München}, \postcode{85748}, \country{Germany}}}


\abstract{
     Convex approximation sets for multiobjective optimization problems are a well-studied relaxation of the common notion of approximation sets. Instead of approximating each image of a feasible solution by the image of some solution in the approximation set up to a multiplicative factor in each component, a convex approximation set only requires this multiplicative approximation to be achieved by some convex combination of finitely many images of solutions in the set. This makes convex approximation sets efficiently computable for a wide range of multiobjective problems - even for many problems for which (classic) approximations sets are hard to compute.
 
 
 \smallskip
 
 In this article, we propose a polynomial-time algorithm to compute convex approximation sets that builds upon an exact or approximate algorithm for the weighted sum scalarization and is, therefore, applicable to a large variety of multiobjective optimization problems. The provided convex approximation quality is arbitrarily close to the approximation quality of the underlying algorithm for the weighted sum scalarization. In essence, our algorithm can be interpreted as an approximate variant of the dual variant of Benson's Outer Approximation Algorithm. Thus, in contrast to existing convex approximation algorithms from the literature, information on solutions obtained during the approximation process is utilized to significantly reduce both the practical running time and the cardinality of the returned solution sets while still guaranteeing the same worst-case approximation quality. We underpin these advantages by the first comparison of all existing convex approximation algorithms on several instances of the triobjective knapsack problem and the triobjective symmetric metric traveling salesman problem.
 }

\keywords{Multi-Objective Optimization; Approximation Algorithm; Convex Approximation Sets; Benson's Method}


\maketitle

\section{Introduction}
Almost any decision process or challenge constitutes a multiobjective optimization problem, i.e., there are multiple conflicting goals to consider: social benefit may conflict with cost, safety may conflict with personal freedom, and environmental benefit may conflict with profit, to name just a few examples. These problems generally have no unique best solution, and, if no prior information about preferences is available, every \emph{efficient solution} is of interest -- solutions for which each any solution that is better in some goal is necessarily worse in at least one other goal. Hereby, a major challenge is the typically enormous number of images of efficient solutions.

\emph{Approximation} allows to substantially reduce the number of required images while still obtaining a provable  solution quality. Here, it is sufficient to find a set of (not necessarily efficient) solutions, called an \emph{approximation set}, that, for each possible image, contains a solution whose image is component-wise at least as good up to a multiplicative factor. In their seminal work on approximation,~\cite{Papadimitriou+Yannakakis:multicrit-approx} show that, for any~$\varepsilon>0$,  polynomial-sized approximation sets achieving a factor of $1 + \varepsilon$ in every component always exist under weak assumption and can efficiently constructed if an only if an approximate variant of the decision problem associated with the multiobjective optimization problem can efficiently be solved. There exist, however, problem classes for which the construction of such $(1+\varepsilon)$-approximation sets is surmised to be difficult: For example,~\cite{Papadimitriou+Yannakakis:multicrit-approx} show that, unless~$\textsf{P} = \textsf{NP}$, there is no FPTAS for constructing a $(1+\varepsilon)$-approximation set for the biobjective minimum $s$-$t$-cut problem. On the contrary, the single-objective minimum $s$-$t$-cut problem can be solved exactly in polynomial-time.

This strongly motivates to study the~\emph{weighted sum scalarization} in this context, where scalarized single-objective problems are constructed by means of a weighted sum of the objective functions. It is known that, in instances of multiobjective \emph{minimization} problems with $d$~objectives, optimal solution sets for the weighted sum scalarization (i.e., sets of solutions that contain, for each possible weighting of the objectives, an optimal solution for the associated scalarized optimization problem) always constitute approximation sets with approximation factor~$d$~\citep{Bazgan+etal:parametric,Glasser+etal:multi-hardness,Helfrich+etal:approximation-via-scalarization}. However, optimal solution sets for the weighted sum scalarization can be exponentially large as well~\citep{Carstensen:parametric,Gassner+Klinz:parametric-assignment,Nikolova+etal:ESA06,Ruhe:parametric-network-flows}. Moreover, in instances of multiobjective \emph{maximization} problems, 
no constant approximation quality can be obtained in general by means of the weighted sum scalarization~\citep{Bazgan+etal:parametric,Glasser+etal:multi-hardness,Helfrich+etal:approximation-via-scalarization}.

This surprisingly indicates, at first, that the weighted sum scalarization is rather useless for the approximation of multiobjective maximization problems. However, this is not the case anymore when considering the slightly relaxed concept of approximation introduced by~\cite{Diakonikolas+Yannakakis:epsilon-convex}: so-called~\emph{convex approximation sets} contain, for each possible image, finitely many solutions such that a \emph{convex combination of their images} is component-wise as good up to a multiplicative factor. In particular, \cite{Diakonikolas:thesis,Helfrich:mp-approximation} present generic algorithms that, given an instance of a multiobjective minimization or maximization problem, efficiently compute convex approximation sets based on a polynomial-time exact algorithm, an approximation scheme, or an approximation algorithm for the weighted sum scalarization.
As a consequence, $(1 + \varepsilon)$-convex approximation sets (where, for each possible image, some convex combination of finitely many images is as good up to the factor~$1 + \varepsilon$) can be constructed in polynomial time if and only if a polynomial-time exact algorithm or an approximation scheme for the weighted sum scalarization is available.

In this article, we continue this line of research on convex approximation sets. Based on a characterization of convex approximation sets via solution sets that contain an approximate solution for the weighted sum scalarization for each possible weight vector, we present an algorithm for the efficient construction of convex approximation sets that is applicable to a large variety of multiobjective optimization problems and builds upon an exact or approximate algorithm for the weighted sum scalarization. In contrast to the existing algorithms, the algorithm for the weighted sum scalarization is called adaptively, i.e., based on information obtained during the approximation process, to improve the practical running time and reduce the cardinality of the returned solution sets. We underpin these advantages of our algorithms with the first performance study of all existing convex approximation algorithms designed so far. 

\subsection{Related Literature}
A recent survey on exact solution methods for multiobjective optimization problems is provided by~\cite{Halffmann+etal:survey}.
The probably most-applied algorithm to compute optimal solution sets for the weighted sum scalarization for instances of \emph{biobjective} optimization problems is the dichotomic search method~\citep{Aneja1979bicritransport,Cohon2004multiobjectiveprogramming}. In case of more than two objectives, algorithms that explicitly or implicitly determine the so-called weighted sum weight set decomposition~\citep{Alves2016esn,Boekler:output-sensitive,Halffmann2020,Przybylski2010ESN,Oezpeynirci2010ESN} are capable of computing such solution sets. For multiobjective linear programs, Benson-type algorithms~\citep{Benson1998OuterApproximation,Hamel2014BensonType} and their dual variant~\citep{Ehrgott2012DualBenson} can additionally be applied. 

\medskip

For an extensive survey on general approximation methods, which seek to work under very weak assumptions, and approximation methods tailored to multiobjective problems with a particular structure, we refer to~\cite{Herzel+etal:survey}. Almost all general approximation methods for multiobjective optimization problems build upon the seminal work of \cite{Papadimitriou+Yannakakis:multicrit-approx}, who show that polynomial-sized approximation sets exists under weak assumptions. Subsequent work focuses on approximation methods that, given~$\alpha \geq 1$, compute approximation sets whose cardinality is bounded in terms of the cardinality of the smallest possible $\alpha$-approximation set while maintaining or only slightly worsening the approximation quality~$\alpha$~\citep{Bazgan+etal:min-pareto,Diakonikolas+Yannakakis:approx-pareto-sets,Diakonikolas+Yannakakis:epsilon-convex,Koltun+Papadimitriou:approx-dom-repr,Vassilvitskii+Yannakakis:trade-off-curves}. Additionally, the existence result of \cite{Papadimitriou+Yannakakis:multicrit-approx} has recently been improved in \cite{Herzel+etal:dualrestrict} who show that, for any $\varepsilon>0$, a polynomial-sized approximation set that is exact in one objective while ensuring an approximation quality of $1 + \varepsilon$ in all other objectives always exists under the same assumptions.

However, as already outlined, there exists problems for which the results of~\cite{Papadimitriou+Yannakakis:multicrit-approx} and succeeding articles cannot be used, but efficient methods exists for the problems induced by the weighted sum scalarization. Hence, how the weighted sum scalarization can be employed for approximation has naturally been in focus of research as well. The results of~\cite{Glasser+etal:multi-hardness-proceedings,Glasser+etal:multi-hardness} imply that, in each instance of each $p$-objective \emph{minimization} problem and for any $\varepsilon>0$, a $((1 + \varepsilon) \cdot \delta \cdot p)$-approximation set can be computed in fully polynomial time provided that the objective functions are positive-valued and polynomially computable and a $\delta$-approximation algorithm for the optimization problems induced by the weighted sum scalarization exists. 
\cite{Halfmann+etal:general-approx} present a method to obtain, for each instance of each \emph{biobjective minimization} problem and for any $0 < \varepsilon \leq 1$, an approximation set that guarantees an approximation quality of $(\delta \cdot (1 + 2\varepsilon))$ in one objective function while still obtaining an approximation quality of at least $(\delta \cdot (1 + \frac{1}{\varepsilon}))$ in the other objective function, provided a polynomial-time $\delta$-approximation algorithm for the problems induced by the weighted sum scalarization is available. This \enquote{trade-off} between the approximation qualities in the individual objectives is studied in more detailed by~\cite{Bazgan+etal:power-weighted-sum}, who introduce a multi-factor notion of approximation and present a method that, for each $\varepsilon>0$ and in each instance of each $p$-objective \emph{minimization} problem for which a polynomial-time $\delta$-approximation algorithm for the problems induced by the weighted sum scalarization exists, computes a set of solutions such that every feasible solution is component-wise approximated within some vector $(\alpha_1, \ldots, \alpha_p)$ of approximation factors~$\alpha_i \geq 1$ such that $\sum_{i: \alpha_i > 1} \alpha_i = \delta \cdot p + \varepsilon$.

In each of these articles on the weighted sum scalarization, it is shown that the methods and approximation results can \emph{not} be translated to maximization problems in general.
Nevertheless, \cite{Daskalakis+etal:Chord-Algorithm,Diakonikolas+Yannakakis:epsilon-convex} show that, in each instance, convex approximation sets can be computed for multiobjective \emph{minimization and maximization} problems in (fully) polynomial time if and only if there is a (fully) polynomial-time approximation scheme for the weighted sum scalarization. \cite{Helfrich+etal:approximation-via-scalarization} derive sufficient and necessary conditions on general scalarizations such that, in each instance, optimal solution sets for this scalarization are approximation sets. 


\medskip

As already outlined in~\cite{Helfrich:mp-approximation}, computing optimal and approximate solution sets for the weighted sum scalarization\footnote{Convex approximation sets can be characterized via approximate solution sets for the weighted sum scalarization, see Proposition~\ref{prop:convex-approx=mult-para-approx}.} is closely related to the computation of optimal and approximate solutions sets, respectively, for so-called linear (multi-)parametric optimization problems. In such problems, the objective function depends affine-linearly on a single parameter (in the case of a parametric optimization problem) or on multiple parameters (in the case of multi-parametric optimization problems) and the goal is to provide, for any possible (combination of) parameter value(s), an optimal or approximate solution for the non-parametric optimization problem induced by fixing the parameter(s). A general solution approach for obtaining optimal solution sets for linear parametric optimization problems is presented by \citet{Eisner+Severance:method}. 
Exact solution methods for specific optimization problems exist for the linear parametric shortest path problem~\citep{Karp+Orlin:parametric-shortest-path}, the linear parametric assignment problem~\citep{Gassner+Klinz:parametric-assignment}, and the linear parametric knapsack problem~\citep{Eben-Chaime:par-knapsack}. Exact solution methods for general linear  multi-parametric optimization problems are studied in~\citep{Gass1955,Saaty1954,Gal1972,Borrelli2003}. For a recent survey on more general multi-parametric optimization problems and corresponding solution methods, we refer to~\cite{Oberdieck2016}.

Again, the minimum number of solutions in an optimal solution set for a (multi-) parametric optimization problem can be super-polynomially large in the instance size~\citep{Carstensen:PHD}. Hence, many linear (multi-) parametric optimization problems do not admit polynomial-time algorithms in general, even if $\textsf{P}=\textsf{NP}$ and only one parameter is considered. This fact strongly motivates the design of approximation algorithms.  A generic algorithm for linear parametric optimization problems, which can be interpreted as an approximate version of the method of Eisner and Severance, is presented in~\cite{Bazgan+etal:parametric}. The approximation of the linear parametric 0-1-knapsack problem is considered in~\cite{Giudici+etal:param-knapsack,Halman+etal:param-weigth-knapsack,Holzhauser+Krumke:param-knapsack}. In case of linear multi-parametric optimization problems, a general approximation method is presented in~\cite{Helfrich:mp-approximation}.

\subsection{Our Contribution}
We present an algorithm to compute convex approximation sets that is applicable to all multiobjective \emph{minimization and maximization problems} for which efficient exact or approximate solution methods for the weighted sum scalarization are available. Given~$\varepsilon > 0$, our algorithm iteratively calls the exact or approximate algorithm for the weighted sum scalarization and
outputs sets of solutions with cardinality polynomially bounded in the instance size and~$\frac{1}{\varepsilon}$. These solution sets constitute convex approximation sets with approximation quality $1 + \varepsilon$ times the approximation quality achieved by the algorithm for the weighted sum scalarization. Consequently, our algorithm yields a multiobjective (fully) polynomial-time approximation scheme for computing convex approximation sets (an \emph{M(F)PTcAS}) if a polynomial-time exact algorithm or an (F)PTAS for the problems induced by the weighted sum scalarization is available. 

From a high-level perspective, our algorithm follows the principle of operation of the dual variant of Benson's Outer Approximation Algorithm~\citep{Ehrgott2012DualBenson,Boekler:output-sensitive}. However, we introduce rounding schemes for weight vectors (of the weighted sum scalarization) that  guarantee that the algorithm for the weighted sum scalarization is only called for weight vectors contained in a polynomial-sized multiplicative grid as introduced in~\cite{Helfrich:mp-approximation}. 
Hence, in contrast to the convex approximation algorithms presented in~\cite{Diakonikolas:thesis,Helfrich:mp-approximation}, the algorithm for the weighted sum scalarization is called adaptively, i.e., based on information obtained during the approximation process, which yields significant improvements in terms of practical running time and reduced cardinality of the returned solution sets while still guaranteeing the same worst-case convex approximation quality. 
We substantiate these advantages by the first performance study of all existing convex approximation algorithms using instances of the triobjective knapsack problem and the triobjective symmetric metric traveling salesman problem.

In Section~\ref{sec:prelim}, we introduce basic notation and definitions concerning multiobjective optimization, the weighted sum scalarization, and approximation. In particular, we contrast the (classic) concept of approximation sets~\citep{Papadimitriou+Yannakakis:multicrit-approx} and the concept of convex approximation sets~\citep{Diakonikolas:thesis,Diakonikolas+Yannakakis:epsilon-convex}, which can be characterized by means of approximate solutions for the weighted sum scalarization.
In Section~\ref{sec:Foundations}, we lay the theoretical foundation for our algorithm: we recall the dual variant of Benson's Outer Approximation Algorithm~\citep{Ehrgott2012DualBenson,Boekler:output-sensitive} and the grid approach~\citep{Helfrich:mp-approximation} in Section~\ref{sec:dual-benson} and Section~\ref{sec:grid}, respectively. Further, in Section~\ref{sec:rounding-schemes}, we introduce the two rounding schemes that constitute the most essential building blocks of our algorithm. 
Subsequently, in Section~\ref{sec:Approx+Convex:oaa-algorithm}, we state our convex approximation algorithm. We further prove correctness and analyze its worst-case running time. The performance study of all known convex approximation algorithms is then presented in Section~\ref{sec:CompStudy}. 

Note that, in order to make this paper as self-contained as possible, we present the fundamental theoretic results in full, though parts are already published in~\cite{Diakonikolas:thesis,Helfrich:mp-approximation}.

\section{Preliminaries}\label{sec:prelim}
In this section, we introduce basic notation and definitions concerning multiobjective optimization, the weighted sum scalarization, approximation sets and algorithms, and convex approximation sets and algorithms. We further characterize convex approximation sets by means of approximate solutions for the weighted sum scalarization.

\medskip

In the following, we use the notation~$\R^d_{\geqq} \coloneqq \{y \in \R^d \st 0 \leqq y\}$, where $0 \in \R^d$ is the $d$-dimensional zero vector and $\leqq$ is the weak component-wise order defined by
\begin{align*}
    y \leqq y' \text{ if and only if } y_i \leq y'_i \text{ for } i =1,\ldots,d.
\end{align*}
We consider general multiobjective optimization problems, which are defined as follows:
\begin{definition}\label{def:MOP}
    For $d \geq 1$, a \emph{$d$-objective optimization problem~$\Pi$} is given by a set of instances. Each instance~$\I = (X,f)$ consists of a (finite or infinite) non-empty set~$X$ of \emph{feasible solutions} and a vector~$f = (f_1,\ldots, f_d)$ of $d$ \emph{objective functions}~$f_i : X \to \R$, $i = 1,\ldots,d$. If all objective functions are to be minimized, $\Pi$ is called a \emph{$d$-objective minimization problem} and if all objective functions are to be maximized, $\Pi$ is called a \emph{$d$-objective maximization problem}.
\end{definition}
Note that the set of feasible solutions might not be given explicitly. An image of a feasible solution is called a \emph{feasible image}, and $Y\coloneqq f(X) \coloneqq \{ f(x) : x \in X\} \subseteq \R^d$ is called the \emph{image set}. 
We consider optimality as defined as follows:
\begin{definition}
    In an instance~$\I = (X,f)$ of a minimization (maximization) problem, a solution~$x \in X$ \emph{dominates} another solution~$x' \in X$ if $f(x) \neq f(x')$ and $f(x) \leqq f(x')$ ($f(x) \geqq f(x')$). 
    A solution~$x \in X$ is called \emph{efficient} if there exists no other solution~$x' \in X$ that dominates~$x$. In this case, we call the corresponding image~$y = f(x) \in Y$ a \emph{nondominated image}. The set~$X_E \subseteq X$ of all efficient solutions is called the \emph{efficient set} and the set~$Y_N \coloneqq f(X_E)$ of nondominated images is called the \emph{nondominated set}.
\end{definition}
Given an instance~$\I = (X,f)$, the typical goal in multiobjective optimization is to return a set~$P^*\subseteq X$ that contains, for each nondominated image~$y \in Y_N$, a corresponding efficient solution~$x \in X_E$ with $f(x) = y$.

\medskip

In this work, we follow rather standard assumption of rational, nonegative-valued, polynomially computable objective functions made in the context of approximation for multiobjective optimization problems~\citep{Vassilvitskii+Yannakakis:trade-off-curves,Diakonikolas:thesis,Diakonikolas+Yannakakis:approx-pareto-sets}: 
\begin{assumption}\label{ass:mo}
	The number of objective functions~$d$ is assumed to be constant. Moreover, for any multiobjective optimization problem~$\Pi$, there exists a polynomial~$\textup{pol}$ such that, for any instance~$\I = (X,f)$ of $\Pi$, there exists a constant~$m \leq \textup{pol}(\textup{enc}(\I ))$ such that $\textup{enc}(f_i(x)) \leq m$ for each  solution~$x \in X$ and for each $i \in \{1,\ldots,d\}$, where $\textup{enc}(\I )$ and $\textup{enc}(f_i(x))$ denote the encoding lengths of the instance~$\I$ and the value~$f_i(x)$, respectively. The values~$f_1(x),\ldots,f_d(x)$ are further assumed to be nonnegative. All these assumptions, in particular, imply that $f_i(x) = 0$ or $2^{-m} \leq f_i(x) \leq 2^m$ for all~$x \in X$ and all $i \in \{ 1,\ldots,d \}$, and any two values~$f_i(x)$ and $f_i(x')$ differ by at least~$2^{-2m}$ if they are not equal. The bounds $2^{-m}$ and $2^m$ are often rather pessimistic. Hence, we additionally assume that we can compute positive rational (instance-dependent) bounds~$\LB \geq 2^{-m}$ and~$\UB \leq 2^m$ in polynomial time such that $f_i(x) \in \{0\} \cup [\LB, \UB]$ for all~$x \in X$ and all $i \in \{ 1,\ldots,d \}$. 
\end{assumption}
Note that these technical assumption are rather mild and satisfied for a large variety of multiobjective optimization problems such as combinatorial optimization problems and (mixed-) integer linear programs with a bounded feasible set, see, for example,~\cite{Bazgan+etal:parametric,Helfrich:mp-approximation} and Section~\ref{sec:CompStudy}.

\medskip

\emph{Scalarizations}, in particular the weighted sum scalarization that transforms a multiobjective optimization problem by means of a weighted sum of the objectives, are often a key building block for obtaining (efficient) solutions. 
\begin{definition}\label{def:weighted-sum}
    For an instance~$\I = (X,f)$ of a $d$-objective minimization (maximization) problem and a weight vector~$\lambda = (\lambda_1,\ldots, \lambda_p) \in \R^d_\geqq \setminus \{0\}$, the \emph{weighted sum scalarization with weight vector $\lambda$} is the single-objective instance
    \begin{align*}
        \min_{x \in X} \left(\max_{x \in X} \right) ~ \lambda_1 \cdot f_1(x) + \ldots  + \lambda_d \cdot f_d(x).
    \end{align*} 
    Then, a solution~$x \in X$ is called \emph{optimal} for $\lambda$ if $\sum_{i=1}^d \lambda_i \cdot f_i(x) \leq \sum_{i=1}^d \lambda_i \cdot f_i(x')$ ($\sum_{i=1}^d \lambda_i \cdot f_i(x) \geq \sum_{i=1}^d \lambda_i \cdot f_i(x')$) for all $x' \in X$. A set of solutions~$S^* \subseteq X$ is an \emph{optimal solution set} for the weighted sum scalarization if, for each $\lambda \in \R^d_\geqq \setminus \{0\}$, there exists a solution~$x^\lambda \in S^*$ that is optimal for $\lambda$. Further, solutions that are optimal for some~$\lambda \in \R^d_\geqq \setminus \{0\}$ are called \emph{supported}.
\end{definition}
It is well-known that a solution is efficient if it is optimal for a positive weight vector. 
If the weight vector contains zero-valued components, there exists at least one optimal solution that is efficient (under our assumptions). However, not every efficient solution must be supported~\citep{Ehrgott:book}.

\medskip

Algorithms for the construction of optimal solution sets for the weighted sum scalarization are widely studied in the literature~\citep{Alves2016esn,Aneja1979bicritransport,Benson1998OuterApproximation,Boekler:output-sensitive,Cohon2004multiobjectiveprogramming,Ehrgott2012DualBenson,Halffmann2020,Przybylski2010ESN,Oezpeynirci2010ESN}.
However, many multiobjective optimization problems are \emph{intractable} in the sense that the cardinality of the nondominated image set can be exponentially large in the instance size. This includes, for example, the multiobjective shortest path problem, the multiobjective assignment problem, the multiobjective knapsack problem, and the multiobjective traveling salesman problem, see~\cite{Figueira2017EasyMoco} for a more detailed study on tractability. This intractability often holds even true for the minimum cardinality of optimal solution sets for the weighted sum scalarization~\citep{Carstensen:PHD,Gassner+Klinz:parametric-assignment,Nikolova+etal:ESA06,Ruhe:parametric-network-flows}. So, for many important problems, the existence of polynomial-time exact algorithms as well as efficient algorithms for determining optimal solution sets for the weighted sum scalarization is ruled out, even if $\textsf{P} = \textsf{NP}$. This strongly motivates the concept of approximation:
\begin{definition}
    Let~$\alpha\geq 1$. In an instance~$\I = (X,f)$ of a $d$-objective minimization (maximization) problem, a feasible solution~$x \in X$ \emph{$\alpha$-approximates} another feasible solution~$x' \in X$ if $f(x) \leqq \alpha \cdot f(x')$ ($f(x) \geqq  \frac{1 }{\alpha} \cdot f(x')$). 
    A set~$P_{\alpha} \subseteq X$ of feasible solutions is called an \emph{$\alpha$-approximation set} if, for each feasible solution~$x'\in X$, there exists a solution~$x \in P_{\alpha}$ that $\alpha$-approximates $x'$. For $\varepsilon > 0$, a $(1 + \varepsilon)$-approximation set is also referred to as an \emph{$\varepsilon$-Pareto set}.
\end{definition}
\begin{definition}
	For~$\alpha \geq 1$, an \emph{$\alpha$-approximation algorithm}~$\A_\alpha$ for a $d$-objective minimization or maximization problem is an algorithm that, given an instance~$\I = (X,f)$, returns an $\alpha$-approximation set for~$\I$ in polynomial time. A \emph{multiobjective polynomial-time approximation scheme (MPTAS)} is a family~$\{\A_{1+ \varepsilon} \st \varepsilon>0\}$ of algorithms such that, for every~$\varepsilon>0$, the algorithm~$\A_{1+\varepsilon}$ is a $(1+\varepsilon)$-approximation algorithm. If the running time of each~$\A_{1+\varepsilon}$ is, additionally, polynomial in $\frac{1}{\varepsilon}$, the family~$\{\A_{1+ \varepsilon} \st \varepsilon>0\}$ is called a \emph{multiobjective fully polynomial-time approximation scheme (MFPTAS)}.
\end{definition}
\cite{Papadimitriou+Yannakakis:multicrit-approx} show that, for every~$\varepsilon > 0$ and in instances meeting our assumptions, there always exist polynomial-sized $\varepsilon$-Pareto sets. Further, they show that the efficient computability of $\varepsilon$-Pareto sets is polynomially equivalent to the efficient solvability of the so-called \emph{gap problem}, which is an approximate variant of the canonical decision problem associated with the multiobjective optimization problem. However, there also exist problems such as the biobjective minimum $s$-$t$-cut problem for which no MFPTAS can exist unless $\textsf{P} = \textsf{NP}$~\citep{Papadimitriou+Yannakakis:multicrit-approx}. This changes when a relaxed concept of approximation as introduced in~\cite{Diakonikolas+Yannakakis:epsilon-convex,Diakonikolas:thesis} is considered:
\begin{definition}[\cite{Diakonikolas+Yannakakis:epsilon-convex}]\label{def:convex-approximation-multi-objective}
	Let~$\alpha\geq 1$. In an instance~$\I = (X,f)$ of a $d$-objective minimization (maximization) problem, a set $S_\alpha \subseteq X$ of feasible solutions is an \emph{$\alpha$-convex approximation set} if, for each feasible solution~$x' \in X$, there exist an integer~$L \geq 1$, solutions $x^1,\ldots, x^L \in S_\beta$ 
    and scalars~$\theta_1,\ldots,\theta_L \geq 0$ with $\sum_{\ell = 1}^L \theta_\ell = 1$ such that $\sum_{\ell = 1}^L \theta_\ell \cdot f(x^\ell) \leqq \alpha \cdot f(x')$ ( $\sum_{\ell = 1}^L \theta_\ell \cdot f(x^\ell) \geqq \frac{1}{\alpha} \cdot f(x') $).
	For~$\varepsilon>0$, a $(1+\varepsilon)$-convex approximation set is also referred to as an \emph{$\varepsilon$-convex Pareto set}.
\end{definition}
Figure~\ref{fig:approximation} contrasts these two concepts of approximation for an instance of a biobjective minimization problem.
\begin{figure}[tb]
	\centering
    \includegraphics[width=0.8\textwidth]{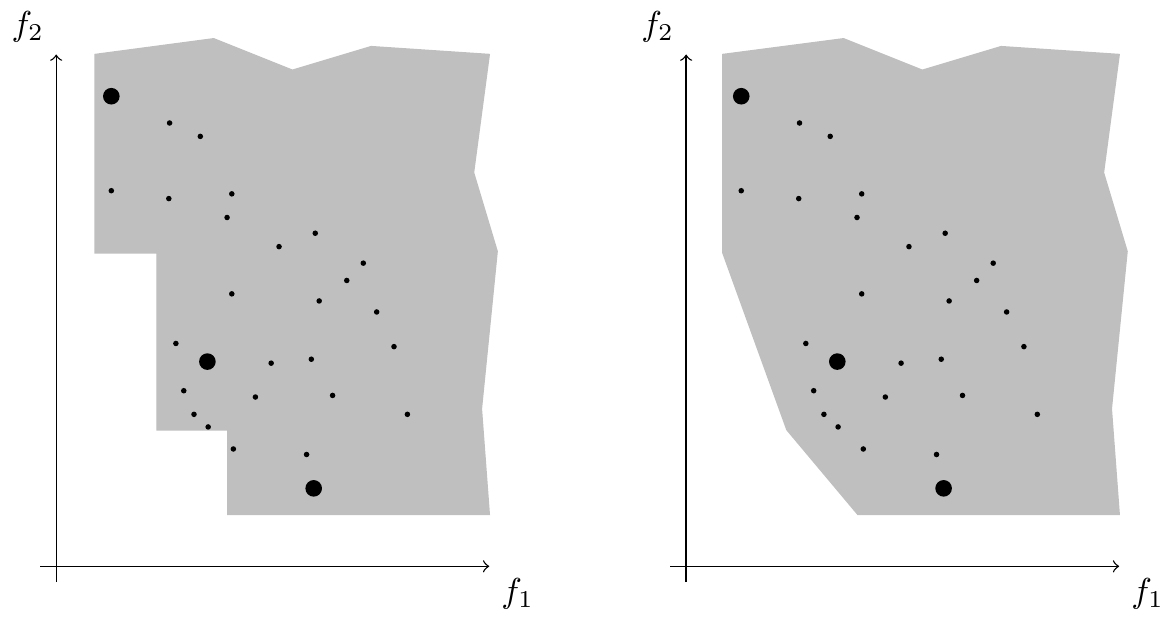}
	\caption{An $\alpha$-approximation set (left) and an $\alpha$-convex approximation set (right) for an instance of a biobjective minimization problem. The gray region indicates the location of all images approximated by (convex combinations of) the bold images.}
	\label{fig:approximation}
\end{figure}
We also define convex approximation algorithms:
\begin{definition}\label{def:convex-approximation-algorithm}
	For~$\alpha \geq 1$, an \emph{$\alpha$-convex approximation algorithm}~$\A_\alpha$ for a $d$-objective minimization or maximization problem is an algorithm that, given an instance~$\I$, returns an $\alpha$-convex approximation set for~$\I$ in polynomial time. A \emph{multiobjective polynomial-time convex approximation scheme (MPTcAS)} is a family~$\{\A_{1+ \varepsilon} \st \varepsilon>0\}$ of algorithms such that, for every~$\varepsilon>0$, the algorithm~$\A_{1+\varepsilon}$ is a $(1+\varepsilon)$-convex approximation algorithm. If the running time of each~$\A_{1+\varepsilon}$ is, additionally, polynomial in $\frac{1}{\varepsilon}$, the family~$\{\A_{1+ \varepsilon} \st \varepsilon>0\}$ is called a \emph{multiobjective fully polynomial-time convex approximation scheme (MFPTcAS)}.
\end{definition}
Since every $\alpha$-approximation set is also an $\alpha$-convex approximation set, any M(F)PTAS is also an M(F)PcAS. Clearly, the converse does not hold. The following proposition shows that convex approximation sets can be characterized by means of the weighted sum scalarization. The proposition extends~\cite{Diakonikolas:thesis}, where the result is shown under the additional assumption that all feasible images are approximately balanced, i.e., all components of an image are within a ratio of~$2$ of each other.
\begin{proposition}\label{prop:convex-approx=mult-para-approx}
	Let~$\alpha \geq 1$. In an instance~$\I = (X,f)$ of a $d$-objective minimization/maximization problem, a set of feasible solutions~$S_\alpha \subseteq X$ is an $\alpha$-convex approximation set if and only if, for each weight vector~$\lambda \in \R^d_\geqq \setminus \{0\}$, there exists a solution~$x^\lambda \in S_\alpha$ such that
	\begin{align}
	\lambda^\top f(x^\lambda) &\leq \alpha \cdot \lambda^\top f(x) \text{ for all } x \in X \text{ (minimization)}, \label{eq:char-convex-approx} \\
	\lambda^\top f(x^\lambda) &\geq \frac{1}{\alpha} \cdot \lambda^\top f(x) \text{ for all } x \in X \text{ (maximization)}. \notag
	\end{align}
\end{proposition}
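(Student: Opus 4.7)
The plan is to prove the two directions of the equivalence separately, treating only the minimization case (the maximization case follows by flipping each $\leq$ to $\geq$, replacing $\alpha$ by $\frac{1}{\alpha}$, and reflecting the cone $\R^d_\geqq$ through the origin).

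\emph{Forward direction (convex approximation implies weighted-sum approximation).} Fix any $\lambda \in \R^d_\geqq \setminus \{0\}$ and let $x^\lambda \in S_\alpha$ be an element minimizing the map $s \mapsto \lambda^\top f(s)$ over $S_\alpha$ (existing since $S_\alpha$ is finite in the settings where Proposition~\ref{prop:convex-approx=mult-para-approx} is applied). For any $x \in X$, Definition~\ref{def:convex-approximation-multi-objective} produces $x^1,\ldots,x^L \in S_\alpha$ and coefficients $\theta_\ell \geq 0$ with $\sum_\ell \theta_\ell = 1$ and $\sum_\ell \theta_\ell f(x^\ell) \leqq \alpha f(x)$. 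Because $\lambda \geqq 0$, taking the inner product with $\lambda$ preserves the inequality, so $\sum_\ell \theta_\ell \lambda^\top f(x^\ell) \leq \alpha \lambda^\top f(x)$. Since $\lambda^\top f(x^\lambda)$ is a lower bound on every term of the convex combination on the left, this yields exactly \eqref{eq:char-convex-approx}.

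\emph{Reverse direction (weighted-sum approximation implies convex approximation).} Fix any $x' \in X$; I would show that $\alpha f(x') \in C \coloneqq \conv\{f(s) : s \in S_\alpha\} + \R^d_\geqq$. Once this is established, decomposing $\alpha f(x') = c + r$ with $c \in \conv\{f(s) : s \in S_\alpha\}$ and $r \in \R^d_\geqq$ and invoking Carathéodory's theorem writes $c = \sum_\ell \theta_\ell f(x^\ell)$ as a \emph{finite} convex combination, giving $\sum_\ell \theta_\ell f(x^\ell) = c \leqq c + r = \alpha f(x')$, which is the required convex approximation. To prove $\alpha f(x') \in C$, I would argue by contradiction: $C$ is a closed convex set (polyhedral when $S_\alpha$ is finite), so if $\alpha f(x') \notin C$ the separating hyperplane theorem delivers $\lambda \in \R^d \setminus \{0\}$ with $\lambda^\top y > \lambda^\top(\alpha f(x'))$ for every $y \in C$. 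Since $C + \R^d_\geqq \subseteq C$, any negative coordinate of $\lambda$ could be exploited to push $\lambda^\top y$ arbitrarily low, forcing $\lambda \geqq 0$. Hence $\lambda \in \R^d_\geqq \setminus \{0\}$ is a valid weight vector, and the hypothesis supplies $x^\lambda \in S_\alpha$ with $\lambda^\top f(x^\lambda) \leq \alpha \lambda^\top f(x')$, contradicting the strict separation inequality at $y = f(x^\lambda) \in C$.

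The main obstacle is executing the separating-hyperplane step cleanly, namely extracting simultaneously a \emph{nonnegative} separator (via the recession-cone argument on $\R^d_\geqq$) and \emph{strict} separation (requiring closedness of $C$). Both are automatic when $S_\alpha$ is finite, which is the setting relevant for all algorithmic applications of the proposition; the finiteness assumption present in \cite{Diakonikolas:thesis} on approximately balanced images is not needed because the separation argument never uses component-wise ratios of images. The maximization case is handled by the analogous construction with the set $\conv\{f(s) : s \in S_\alpha\} - \R^d_\geqq$ and the symmetric separation argument.
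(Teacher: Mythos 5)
Your proposal is correct and follows essentially the same route as the paper's proof: the forward direction takes the weighted-sum minimizer over $S_\alpha$ and passes $\lambda$ through the convex combination, and the reverse direction separates a hypothetical unapproximated image from the set $\conv\{f(s) : s \in S_\alpha\} + \R^d_\geqq$ (the paper scales this set by $\frac{1}{\alpha}$ where you scale the target point by $\alpha$, which is equivalent), using the recession-cone argument to force the separator into $\R^d_\geqq \setminus \{0\}$. The only cosmetic difference is that the paper justifies attainment of the minimum over $S_\alpha$ via finiteness of the image set under Assumption~\ref{ass:mo} rather than finiteness of $S_\alpha$ itself.
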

\begin{proof} We prove the claim for the minimization case. The maximization case can be handled similarly.
	Let $S_\alpha$ be an $\alpha$-convex approximation set, let $\lambda \in \R^d_\geqq \setminus \{0\}$, and let~$x^\lambda \in S_\alpha$ such that $ \lambda^\top f(x^\lambda) \leq \lambda^\top f(x)$ for all~$x \in S_\alpha$. Hereby, note that Assumption~\ref{ass:mo} implies that the image set of the given instance is finite and, thus, such a solution~$x^\lambda$ exists. Since, for each solution~$x \in X$, there exist solutions~$x^1,\ldots, x^L \in S_\alpha$ and scalars~$\theta_1,\ldots, \theta_L \geq 0$, $\sum_{\ell = 1}^L \theta_\ell = 1$, such that $\sum_{\ell = 1}^L \theta_\ell \cdot f(x^\ell) \leqq \alpha \cdot f(x)$, it follows that
	\begin{align*}
	\lambda^\top f(x^\lambda) = \sum_{\ell = 1}^L \theta_\ell \cdot \lambda^\top f(x^\lambda) \leq \sum_{\ell = 1}^L \theta_\ell \cdot \lambda^\top f(x^\ell) \leq \alpha \cdot \lambda^\top f(x).
	\end{align*}
	Now, let~$S_\alpha \subseteq X$ be a set of feasible solutions such that, for each~$\lambda \in \R^d \setminus \{0\}$, there exists a solution~$x^\lambda \in S_\alpha$ with
	$\lambda^\top f(x^\lambda) \leq \alpha \cdot \lambda^\top f(x)$ for all $x \in X$. Then, each solution~$x \in X$ such that
	\begin{align*}
	f(x) \in Q &\coloneqq \conv\left( \left\{ \frac{1}{\alpha} f(\hat{x}) \st \hat{x} \in S_\alpha \right\} \right) + \R^d_\geqq \\
	&= \frac{1}{\alpha} \cdot \conv\left( \left\{ f(\hat{x}) \st \hat{x} \in S_\alpha \right\} \right) + \R^d_\geqq \subseteq \R^d_\geqq
	\end{align*}
	is $\alpha$-approximated by a convex combination of finitely many images of solutions in~$S_\alpha$.
	
	Assume for the sake of a contradiction that there exists a solution~$x^* \in X$ such that $f(x^*) \notin Q$. Then, by the strict hyperplane separation theorem~\citep{Boyd+Vandenberghe:convex-opt}, there exists a vector~$\lambda \in \R^d$ such that $\lambda^\top f(x^*) < \lambda^\top q$ for all points~$q \in Q$. Since $q + t \cdot e^i \in Q$ for all $q \in Q$, $i = 1,\ldots,d$, and all $t \geq 0$, where~$e^i$ denotes the $i$th unit vector in $\R^d$, it holds that $\lambda \in \R^d_\geqq \setminus \{0\}$. Hence, there exists a solution~$x^\lambda \in S_\alpha$ such that $\lambda^\top f(x^\lambda) \leq \alpha \cdot \lambda^\top f(x^*)$. Since~$\frac{1}{\alpha} \cdot f(x^\lambda) \in  Q$, it follows that
	\begin{align*}
	\lambda^\top f(x^*) < \lambda^\top \left(\frac{1}{\alpha} \cdot f(x^\lambda)\right) \leq \frac{\alpha \cdot \lambda^\top f(x^*)}{\alpha} = \lambda^\top f(x^*),
	\end{align*}
	which is a contradiction.
\end{proof}
Consequently, as indicated in Inequality~\eqref{eq:char-convex-approx}, convex approximation sets relate to approximate solutions of the weighted sum scalarization. Hence, we define:
\begin{definition}
    Let $\alpha \geq 1$ and $\lambda \in \R^d_\geqq \setminus \{0\}$. In an instance~$\I = (X,f)$ of a $d$-objective minimization (maximization) problem, a solution~$x \in X$ is called an \emph{$\alpha$-approximation for $\lambda$} if $\sum_{i=1}^d \lambda_i \cdot f_i(x) \leq \alpha \cdot \sum_{i=1}^d \lambda_i \cdot f_i(x')$ ($\sum_{i=1}^d \lambda_i \cdot f_i(x) \geq \frac{1}{\alpha} \cdot \sum_{i=1}^d \lambda_i \cdot f_i(x')$) for all $x' \in X$. 
\end{definition}
Hence, a set~$S_\alpha \subseteq X$ of solutions is an $\alpha$-convex approximation set if and only if it contains, for each $\lambda \in \R^d_\geqq \setminus \{0\}$, a solution~$x^\lambda \in S_\alpha$ that is an  $\alpha$-approximation for~$\lambda$. 
Clearly, for each solution~$x \in X$, there is a (possibly empty) subset~$\Lambda' \subseteq \R^d_\geqq \setminus \{0\}$ of weight vectors such that~$x$ is an $\alpha$-approximation for all weight vectors~$\lambda' \in \Lambda'$. Hence, this notion of is relaxed as follows: A solution~$x$ is an $\alpha$-approximation for $\Lambda' \subseteq \R^d_\geqq \setminus \{0\}$ if it is an $\alpha$-approximation for every~$\lambda' \in \Lambda'$. Finally, we define approximation algorithms for the weighted sum scalarization:
\begin{definition}
Let $\alpha \geq 1$. An \emph{$\alpha$-approximation algorithm~$\ALG_\alpha$ for the weighted sum scalarization} is an algorithm that, given an instance~$\I$ and a weight vector~$\lambda \in \R^d \setminus \{0\}$, returns an $\alpha$-approximation for $\lambda$ in polynomial time. A \emph{polynomial-time approximation scheme (PTAS) for the weighted sum scalarization} is a family~$\{\ALG_{1 + \varepsilon} : \varepsilon>0\}$ of algorithms such that, for every~$\varepsilon>0$, the algorithm~$\ALG_{1 + \varepsilon}$ is a $(1+\varepsilon)$-approximation algorithm for the weighted sum scalarization. If the running time of each~$\ALG_{1 + \varepsilon}$ is, additionally, polynomial in~$\frac{1}{\varepsilon}$, the family~$\{\ALG_{1 + \varepsilon} : \varepsilon>0\}$ is called a \emph{fully polynomial-time approximation scheme (FPTAS) for the weighted sum scalarization}.
\end{definition}
With Inequality~\eqref{eq:char-convex-approx}, it is easy to see that the existence of an M(F)PTcAS implies the existence of an (F)PTAS for the weighted sum scalarization. 
Moreover,~\cite{Diakonikolas:thesis} presents, under the additional assumption that all objective function values are strictly positive, an M(F)PTcAS that is based on scaling images to be \emph{approximately balanced} and based on an (F)PTAS for the weighted sum scalarization.
The algorithm presented in~\cite{Helfrich:mp-approximation} lifts approximation algorithms for the weighted sum scalarization to construct convex approximation sets with approximation qualities that are arbitrarily close to the approximation quality of the weighted sum approximation algorithm. Since this algorithm avoids scaling the images to be approximately balanced and is, thus, applicable to instances with nonnegative image sets as well, we can conclude that 
there exists an M(F)PTcAS for a multiobjective optimization problem if and only if there is an (F)PTAS for the weighted sum scalarization. So, recalling  the example from above, there exists an MFPTcAS for the multiobjective min $s$-$t$-cut problem since its single-objective version can be solved exactly in polynomial time~\citep{Goldberg1988:max-flow}.

\medskip

Both algorithms in \cite{Diakonikolas:thesis,Helfrich:mp-approximation} call the algorithm for the weighted sum scalarization in a non-adaptive way. That is, no information induced by solutions obtained during the process is used to cleverly choose which scalarized optimization problem should be solved next.
In this article, we present an algorithm that adaptively chooses the scalarized optimization problems to be solved in order to the decrease practical running time as well as the cardinality of the returned convex approximation set. Our algorithm combines the idea of the dual variant of Benson's Outer Approximation Algorithm~\citep{Boekler:output-sensitive,Ehrgott2012DualBenson} with the grid approach presented in~\cite{Helfrich:mp-approximation}. As a consequence of this, our algorithm can be interpreted as an approximate version of the dual variant of Benson's Outer Approximation Algorithm~\citep{Boekler:output-sensitive,Ehrgott2012DualBenson} and/or as a multiobjective generalization of the approximate variant of the dichotomic search algorithm for biobjective optimization problems introduced in~\cite{Daskalakis+etal:Chord-Algorithm,Bazgan+etal:parametric}. As in \cite{Helfrich:mp-approximation}, our algorithm lifts an algorithm~$\ALG$ for the weighted sum scalarization to obtain a convex approximation set with approximation quality arbitrarily close to the approximation quality of~$\ALG$ while relying on polynomially many calls to~$\ALG$ only. In particular, if $\ALG$ is a polynomial-time exact algorithm or an (F)PTAS for the weighted sum scalarization, our algorithm constitutes an M(F)PTcAS. To this end, our last assumption is naturally as follows:
\begin{assumption}\label{ass:approx-alg-weighted-sum}
	For some~$\alpha \geq 1$, there exists an $\alpha$-approximation algorithm~$\ALG_{\alpha}$ for the weighted sum scalarization, i.e., $\ALG_{\alpha}$ returns, for each instance~$\I = (X,f)$ and for each weight vector~$\lambda \in \R^d_\geqq \setminus \{0\}$, a solution~$x'$ such that $\lambda^\top f(x') \leq \alpha \cdot \lambda^\top f(x)$ for all~$x \in X$ in the case of a minimization problem or $\lambda^\top f(x') \geq \frac{1}{\alpha} \cdot \lambda^\top f(x)$ for all~$x \in X$ in the case of a maximization problem.\footnote{The approximation guarantee~$\alpha$ is assumed to be independent of~$\lambda$. However, it is allowed that~$\alpha$ depends on the instance (such that the encoding length of~$\alpha$ is polynomially bounded in the encoding length of the instance).} The running time of $\ALG_{\alpha}$ is denoted by~$T_{\ALG_{\alpha}}$.
\end{assumption}

\section{Foundations}\label{sec:Foundations}
In this section, we present the theoretical foundations for our algorithm. In Section~\ref{sec:dual-benson}, we first recall the dual variant of Benson's Outer Approximation Algorithm, which uses an exact algorithm for the weighted sum scalarization to compute exact solution sets for the weighted sum scalarization. Our algorithm modifies this algorithm to compute convex approximation sets. This is done by introducing rounding schemes for weight vectors that  guarantee that the algorithm for the weighted sum scalarization is only called for weight vectors that are contained in a polynomial-sized multiplicative grid on the set of all eligible weights.  So, in a nutshell, our algorithm combines the idea of the dual variant of Benson's Outer Approximation Algorithm with the grid approach presented in~\cite{Helfrich:mp-approximation}.
The polynomial-sized multiplicative grid is introduced in Section~\ref{sec:grid}. The rounding schemes are then introduced in Section~\ref{sec:rounding-schemes}.

\medskip

To this end, observe that a solution~$x$ is optimal for $\lambda$ if and only if $x$ is optimal for~${t \cdot \lambda}$ for every positive scalar~$t > 0$. This holds also in the approximate sense: For~$\beta \geq 1$, a solution~$x$ is a $\beta$-approximation for $\lambda$ if and only if $x$ is a $\beta$-approximation for $t \cdot \lambda$ for every positive scalar~$t > 0$. This implies that the set of all eligible weight vectors on which the multiplicative grid is imposed can be restricted to the \emph{weight set}~\citep{Przybylski2010ESN}$$\Lambda \coloneqq \{ \lambda \in \R^d_\geqq \st \norm{\lambda}_1 \coloneqq \sum_{i=1}^d \lambda_i = 1 \}.$$
Consequently, in order to obtain a $\beta$-convex approximation set for $\beta \geq 1$, it suffices to construct a set of solutions that contains a $\beta$-approximation for each $\lambda \in \Lambda$. The weight set~$\Lambda$ is a $(d-1)$-dimensional polytope in $\R^d_\geqq$. So, in the case~$d = 3$, we can illustrate weight vectors~$\lambda \in \Lambda$ and subsets~$\Lambda' \subseteq \Lambda$ of weight vectors in $\R^2$. This convenience is  illustrated in Figure~\ref{fig:weight-set}.
\begin{figure}[tb]
    \centering
    \includegraphics[width = .9\textwidth]{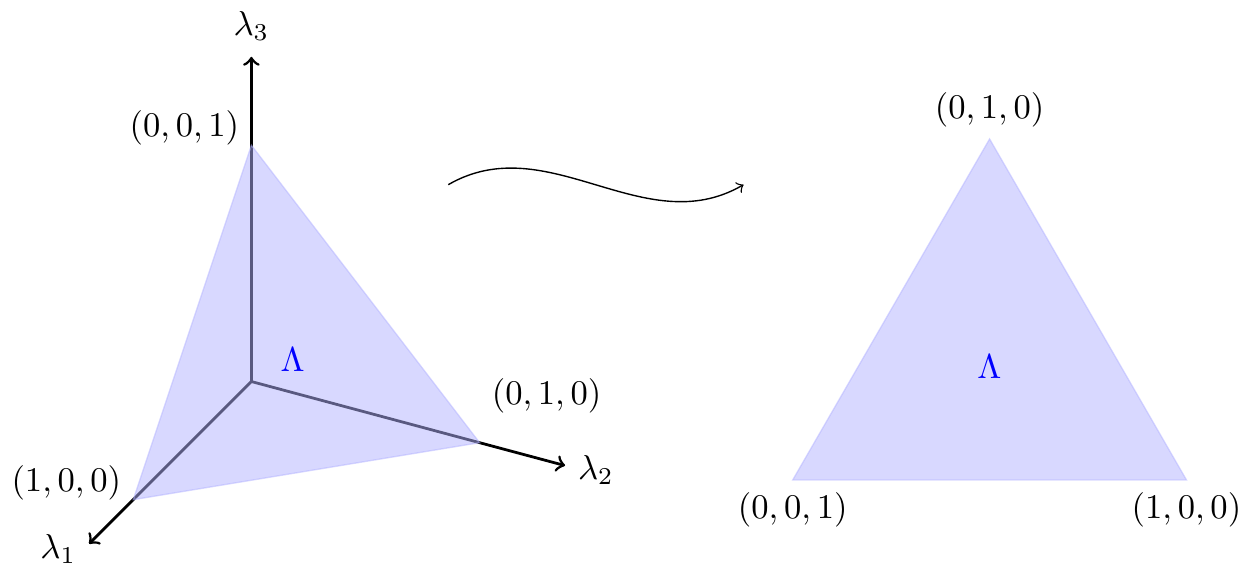}
    \caption{Illustration of the weight set~$\Lambda \subseteq \R^3$ and its projection onto~$\R^2$.}
    \label{fig:weight-set}
\end{figure}

\subsection{Dual variant of Benson's Outer Approximation Algorithm}\label{sec:dual-benson}
Next, we review the basic version of the dual variant of Benson's Outer Approximation Algorithm presented in~\cite{Ehrgott2012DualBenson} and adapted in \cite{Boekler:output-sensitive}. Note that variants exists that reduce the running time and/or guarantee a minimum cardinality of the returned solution set. However, the basic variant suffices for understanding our algorithm. 

\medskip

As described for example in~\cite{Ehrgott2012DualBenson}, we equip each weight vector~$\lambda \in \Lambda$ with an additional component~$z \in \R$ that will represent possible objective values of the weighted sum scalarization with weight vector~$\lambda$. That is, we lift the weight set and consider the Cartesian product $\Lambda \times \R$ of~$\Lambda$ and~$\R$ in which each feasible solution~$x \in X$ induces the half-space
\begin{align*}
H(x) \coloneqq \{ (\lambda_1,\ldots, \lambda_d, z) \in \Lambda \times \R : (\lambda_1,\ldots, \lambda_d)^\top f(x) \geq z\}
\end{align*}
in the case of minimization, and 
\begin{align*}
H(x) \coloneqq \{ (\lambda_1,\ldots, \lambda_d, z) \in \Lambda \times \R : (\lambda_1,\ldots, \lambda_d)^\top f(x) \leq z\}
\end{align*} 
in the case of maximization. Then, each finite set~$S$ of solutions induces the $d$-dimensional polyhedron 
\begin{align*}
	D(S) \coloneqq \bigcap_{x \in S} H(x),
\end{align*}
which is illustrated in Figure~\ref{fig:Approx+Convex:objective-polyhedron} 
\begin{figure}[tb]
	\centering
    \includegraphics[width = \textwidth]{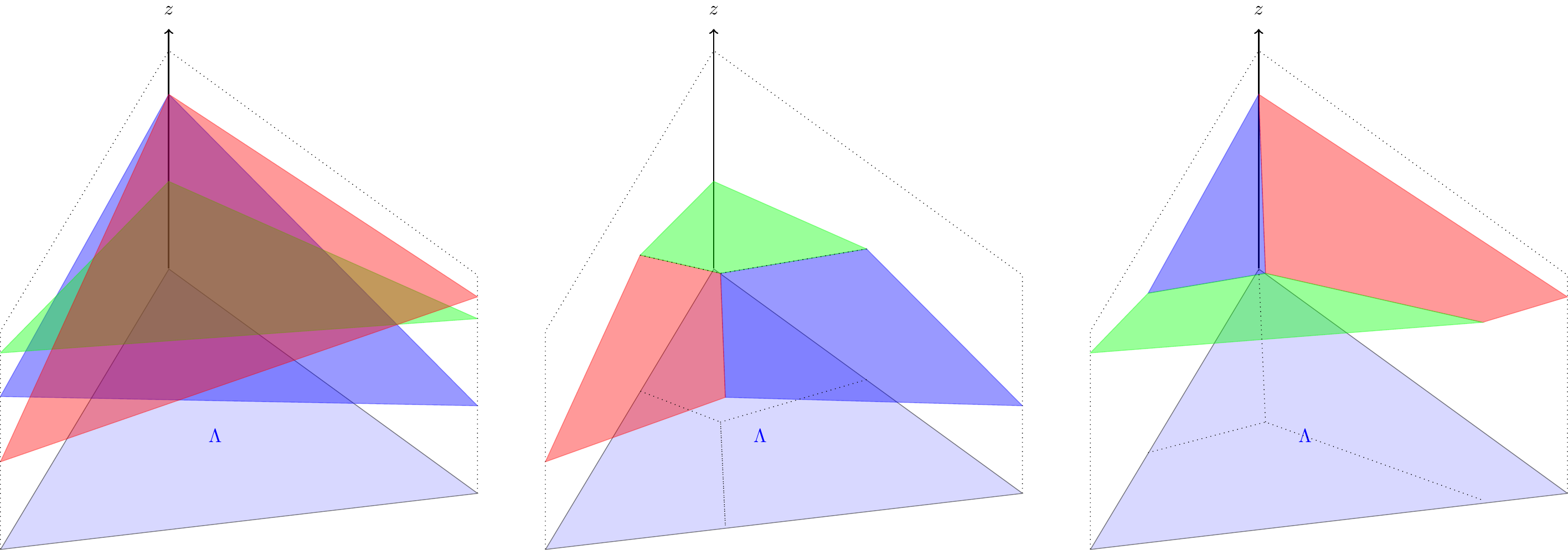}
	\caption{Half-spaces in $\Lambda \times \R$ induced by (left) three solutions~$x^1,x^2,x^3$ as well as the objective polyhedron of $S \coloneqq \{x^1,x^2,x^3\}$ in an instance of a 3-objective (middle) minimization  problem and (right) a maximization problem. }
	\label{fig:Approx+Convex:objective-polyhedron}
\end{figure}
and for which the following holds:
\begin{proposition}[\cite{Ehrgott2012DualBenson}]\label{prop:termination-DualBenson}
	Let~$S \subseteq X$ be a finite set of solutions. If every extreme point~$(\lambda_1, \ldots, \lambda_d,z)$ of $D(S)$ satisfies $ z = \min_{x \in X} \lambda^\top f(x)$, it follows that~$S$ is an optimal solution set for the weighted sum scalarization.
\end{proposition}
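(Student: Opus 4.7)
The plan is to work in the minimization case and reformulate the claim in terms of the concave piecewise linear functions
\begin{align*}
g_S(\lambda) \coloneqq \min_{x \in S} \lambda^\top f(x), \qquad g_X(\lambda) \coloneqq \min_{x \in X} \lambda^\top f(x).
\end{align*}
Both are well-defined on $\Lambda$ (for $g_X$, by Assumption~\ref{ass:mo} the image set $Y$ is finite). Since $S \subseteq X$, the inequality $g_S(\lambda) \geq g_X(\lambda)$ holds trivially for every $\lambda \in \Lambda$, so the whole proposition reduces to establishing the reverse inequality on all of $\Lambda$; together with positive homogeneity in $\lambda$, this will then yield optimality for every weight vector in $\R^d_\geqq \setminus \{0\}$.

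The next step is to identify $g_S$ with the ``upper envelope'' of $D(S)$. By construction, for each $\lambda \in \Lambda$, the point $(\lambda, g_S(\lambda))$ is the unique point of $D(S)$ with maximum $z$-coordinate above $\lambda$. I would then show that every extreme point of $D(S)$ lies on this upper envelope: any $(\lambda^*, z^*) \in D(S)$ with $z^* < g_S(\lambda^*)$ is a midpoint of two distinct points of $D(S)$ obtained by slightly perturbing $z^*$. Hence the hypothesis $z = \min_{x \in X}\lambda^\top f(x)$ at every extreme point of $D(S)$ translates into $g_S(\lambda^*) = g_X(\lambda^*)$ at every such extreme point $(\lambda^*, g_S(\lambda^*))$.

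The heart of the argument is then a Minkowski--Weyl representation of $D(S)$. Because $\Lambda$ is a bounded polytope, one can check that the only recession directions of $D(S)$ are of the form $(0, -t)$ with $t \geq 0$; intuitively, once $\lambda$ is fixed, one can only move downward in $z$. Consequently $D(S) = \conv(V) + \cone\{(0,-1)\}$, where $V$ is the finite set of extreme points of $D(S)$. Applied to a point on the upper envelope, this gives $(\lambda, g_S(\lambda)) = \sum_{i} \mu_i (\lambda^i, g_S(\lambda^i))$ for some convex combination over extreme points. Using the equality $g_S(\lambda^i) = g_X(\lambda^i)$ at each extreme point and the concavity of $g_X$, I would conclude
\begin{align*}
g_S(\lambda) \;=\; \sum_i \mu_i\, g_X(\lambda^i) \;\leq\; g_X\Bigl(\sum_i \mu_i \lambda^i\Bigr) \;=\; g_X(\lambda),
\end{align*}
which together with the reverse inequality yields $g_S = g_X$ on $\Lambda$. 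A final one-line scaling argument (using that $x$ is optimal for $\lambda$ iff optimal for $t\lambda$, $t>0$) extends the conclusion to all of $\R^d_\geqq \setminus \{0\}$, so $S$ is an optimal solution set for the weighted sum scalarization.

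The main obstacle I anticipate is the recession-cone step: rigorously excluding any extreme ray other than $(0,-1)$-multiples, so that the upper envelope is indeed the convex hull of the extreme points of $D(S)$ without any ray contribution. Once this is in place, everything else follows from concavity of $g_X$ and the trivial inequality $g_S \geq g_X$.
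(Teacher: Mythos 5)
Your proof is correct and follows essentially the same route the paper uses for the approximate generalization of this statement (Proposition~\ref{prop:SuffCondForApproxPoly}): extreme points of $D(S)$ lie on the upper envelope of the hypograph, an arbitrary point of the envelope is decomposed via Minkowski--Weyl into a convex combination of extreme points plus a nonpositive multiple of $e^{d+1}$, and the hypothesis at the extreme points is transported by concavity/linearity of $\lambda \mapsto \min_{x}\lambda^\top f(x)$. Your perturbation argument for why extreme points satisfy $z^* = g_S(\lambda^*)$ and your verification that the recession cone is $\cone\{(0,-1)\}$ are exactly the details needed and are sound.
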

Hence, the basic version of the dual variant of Benson's Outer Approximation Algorithm works as follows: it starts by solving the weighted sum scalarization for the weight vector~$(\frac{1}{d},\ldots,\frac{1}{d})$. Based on the obtained solution~$x^*$, it initializes~$S \coloneqq \{x^*\}$, constructs the objective polyhedron~$D(S)$, and initializes a queue~$M$ containing all extreme points of $D(\{x^*\})$. Then, it iteratively removes an extreme point~$(\lambda_1,\ldots,\lambda_d,z)$ of~$D(S)$ from $M$, solves the weighted sum scalarization with weight vector~$\lambda = (\lambda_1,\ldots,\lambda_d)$, and obtains an optimal solution~$x'$. If $\lambda^\top f(x') = z$ and, thus, $\min_{x \in X} \lambda^\top f(x) = z$, it proceeds with the next iteration. Otherwise, it updates~$S = S \cup \{x'\}$ and re-initializes~$M$ by recomputing the extreme points of~$D(S)$ and then proceeds with the next iteration. The algorithm terminates if the queue~$M$ is empty, and returns an optimal solution set~$S$ for the weighted sum scalarization. 

\medskip

From a high-level perspective, our algorithm almost coincides with this basic variant of the dual variant of Benson's Outer Approximation Algorithm. However, approximation algorithms for the weighted sum scalarization can be utilized, a polynomially-sized multiplicative grid on the weight set is imposed, and two \enquote{rounding schemes} -- \texttt{BoundaryRounding} and \texttt{GridRounding} -- are introduced to guarantee termination, correctness, and polynomial running time. Both rounding schemes are applied to weight vectors \emph{before calling the algorithm for the weighted sum scalarization} such that the algorithm for the weighted sum scalarization is only called for weight vectors from the grid, which we concisely refer to as \emph{grid weight vectors} in the following. This allows us to polynomially bound the total number of calls to the algorithm for the weighted sum scalarization and, thus, to obtain a polynomial upper bound on the number of computed solutions. In particular, the latter yields polynomial running time of the vertex enumeration and, consequently, polynomial running time of the whole procedure. The characteristics of the underlying grid and the constructions of the rounding schemes also guarantee that the returned set~$S$ containing all computed feasible solutions is a convex approximation set with approximation quality arbitrarily close to the approximation quality of the approximation algorithm for weighted sum scalarization. In the next section, we explain the underlying polynomial-sized multiplicative grid. The subsequent section introduces the two rounding schemes.

\subsection{Multiplicative Grid}\label{sec:grid}
Next, we introduce the underlying polynomial-sized multiplicative grid. The construction essentially follows the approach in~\cite{Helfrich:mp-approximation}. Two results that connect the weight set with the concept of convex approximation are essential. In the following, we call these two results \emph{Continuity Property} and \emph{Projection Property} and first explain how these properties interact. 

\medskip

The Continuity Property ensures that imposing a multiplicative grid on the weight set is a reasonable approach:
\begin{property}[Continuity Property, \cite{Helfrich:mp-approximation}]
	\label{property:Approx+Convex:Continuity} Let~$\gamma \geq 1$, $\delta > 0$, and $\lambda' \in \Lambda$. Then, every $\gamma$-approximation for $\lambda'$ is a $((1 + \delta) \cdot \gamma)$-approximation for $$\Ball{\lambda'}{\gamma}{\delta} \coloneqq \left\{ \frac{\lambda}{\norm{\lambda}_1} \st \frac{1}{1 + \delta} \cdot \lambda'_i \leq \lambda_i \leq \lambda'_i, i = 1,\ldots, d \right\} \subseteq \Lambda.$$ 
\end{property}
Figure~\ref{fig:Approx+Convex:ContinuityPropoerty} illustrates different subsets~$\Ball{\lambda'}{\gamma}{\delta}$. So, roughly speaking, the guaranteed approximation quality of a $\gamma$-approximate solution~$x$ for a weight vector~$\lambda'$ decreases proportionally to the perturbation of the components of~$\lambda'$ in a multiplicative sense. The Continuity Property~\ref{property:Approx+Convex:Continuity} immediately suggest the following adaption of the grid approach of~\cite{Papadimitriou+Yannakakis:multicrit-approx}: Given~$\ALG_{\alpha}$ and $\varepsilon>0$, define
\begin{align}\label{eq:Approx+Convex:grid-infinite}
	\preGrid \coloneqq \left\{ \frac{\lambda'}{\norm{\lambda'}_1} \st \lambda'_i \in \{0\} \cup \{ (1 + \varepsilon)^{a_i} \st a_i \in \Z \}, i =1,\ldots, d \right\} \subseteq \Lambda,
\end{align}
call~$\ALG_{\alpha}$ for each grid weight vector~$\lambda' \in \preGrid$, and collect all obtained solutions. It is easy to see that there exists, for each weight vector~$\lambda \in \Lambda$, a grid weight vector~$\lambda' \in \preGrid$ such that $\lambda \in \Ball{\lambda'}{\alpha}{\varepsilon}$. Hence, the Continuity Property~\ref{property:Approx+Convex:Continuity} yields that the obtained set is a $((1 + \varepsilon) \cdot \alpha)$-convex approximation set. However, the cardinality of $\preGrid$ is not finite due to the multiplicative character of the grid, so the running time of this approach is not even close to being polynomial in the instance size and $\frac{1}{\varepsilon}$ --  the algorithm does in fact not even need to terminate in general. 
\begin{figure}
	\centering
    \includegraphics[width = .8\textwidth]{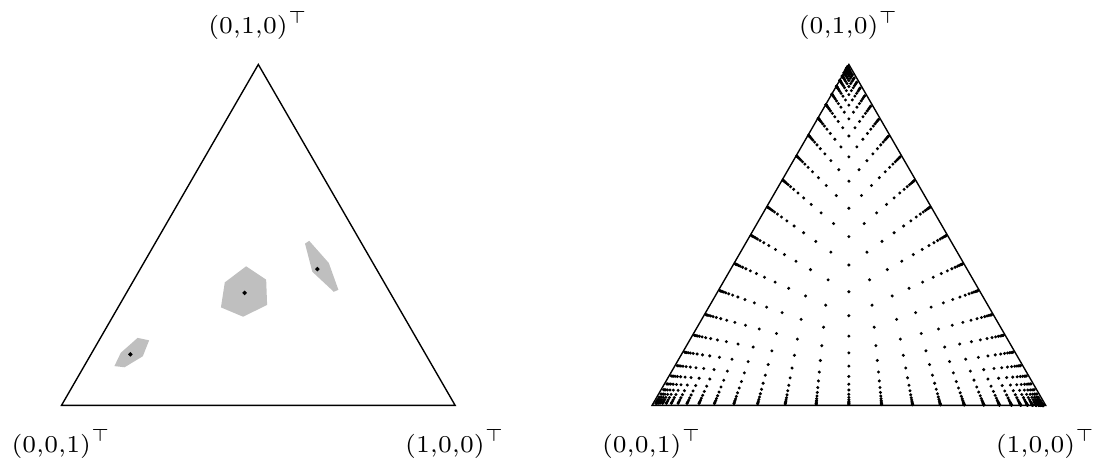}
	\caption{(left) Illustration of sets~$\Ball{\lambda}{\gamma}{\delta}$ (indicated in light gray) for $\gamma \geq 1$, $\delta > 0$ and different~$\lambda \in \Lambda$ (marked as black dots). Every~$\gamma$-approximation for $\lambda$ is a $((1 + \delta) \cdot \gamma)$-approximation for $\Ball{\lambda}{\gamma}{\delta}$. (right) The grid~$\preGrid$ with infinite cardinality as defined in \eqref{eq:Approx+Convex:grid-infinite}. }
 \label{fig:Approx+Convex:ContinuityPropoerty}
 \label{fig:Approx+Convex:InfiniteGrid}
\end{figure}

\medskip

Thus, in order to make this approach feasible, the grid must be (1) restricted to some compact subset~$\Compact \subseteq \int(\Lambda)$ to guarantee finite cardinality and (2) modified such that, for each weight vector~$\lambda \in \Lambda \setminus \Compact$, there still exists a grid weight vector~$\lambda'$ such that every~$\alpha$-approximation for $\lambda'$ is a $((1 + \varepsilon) \cdot \alpha)$-approximation for $\lambda$. 

In~\cite{Helfrich:mp-approximation}, Issue (1) is tackled by transferring the bounds~$\LB$ and~$\UB$ on positive objective function values of feasible solutions to the weight set in the following sense: 
\begin{property}[Projection Property, \cite{Helfrich:mp-approximation}]
	\label{property:Approx+Convex:projection}
	Let~$\beta \geq 1$ and~$0 < \varepsilon' <1$. Further, let~$\lambda \in \Lambda$ be a weight vector such that, for at least one index set~$\emptyset \neq I \subsetneq \{1,\ldots,d\}$, it holds that $\sum_{i \in I} \lambda_i < \frac{\varepsilon' \cdot \LB}{\beta \cdot \UB} \cdot \min_{j \notin I} \lambda_j$. Then there exists a weight vector~$\lambda' \in \Lambda$ satisfying~$\sum_{i \in I} \lambda_i \geq \frac{\varepsilon' \cdot \LB}{\beta \cdot \UB} \cdot \min_{j \notin I} \lambda_j$ for \emph{all} index sets~$\emptyset \neq I \subsetneq \{1,\ldots,d\}$ such that every~$\beta$-approximation for~$\lambda'$ is a $(\beta + \varepsilon')$-approximation for $\lambda$.
\end{property}
This means that weight vectors with components that sum up to a small threshold (i.e., the weight vectors that are close to the boundary of~$\Lambda$) can be neglected while still guaranteeing a slightly worsened approximation quality. Therefore, it is sufficient to construct a grid over the set
\begin{align*}\Compact \coloneqq \left\{ \lambda \in \Lambda \st  \sum_{i \in I} \lambda_i \geq \frac{\varepsilon' \cdot \LB}{\beta \cdot \UB} \cdot \min_{j \notin I} \lambda_j \text{ for all } \emptyset \neq I \subsetneq \{1,\ldots,d\} \right\}.
\end{align*} 
Issue (2) is then tackled by refining the grid in order to compensate the loss in the guaranteed approximation quality due to the Projection Property~\ref{property:Approx+Convex:projection}. The following lemma implies that the set~$\Compact$ is indeed a compact subset of~$\int(\Lambda)$ and provides bounds on the components of its weight vectors. Figure~\ref{fig:Approx+Convex:Projectionregion} illustrates~$\Compact$.
\begin{lemma}\label{lem:Approx+Convex:bounds-on-weight-set}
	Let~$\beta \geq 1$, $0 < \varepsilon' < 1$, and~$\lambda \in \Lambda$. Then, $\sum_{i \in I} \lambda_i \geq \frac{\varepsilon' \cdot \LB}{\beta \cdot \UB} \cdot \min_{j \notin I} \lambda_j$ for all index sets~$\emptyset \neq I \subsetneq \{1,\ldots,d\}$ implies that $\lambda_i \geq \frac{1}{d!} \cdot \left(\frac{\varepsilon' \cdot \LB}{\beta \cdot \UB}\right)^{d-1}$ for $i = 1,\ldots, d$. In particular, $\Compact \subseteq \int(\Lambda)$ holds true.
\end{lemma}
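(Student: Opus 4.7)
The plan is to relabel the components of $\lambda$ so that $\lambda_1 \leq \lambda_2 \leq \cdots \leq \lambda_d$; this is WLOG since the stated lower bound is symmetric in the index. Set $c \coloneqq \frac{\varepsilon' \LB}{\beta \UB}$ for brevity. Since $\varepsilon' < 1$, $\LB \leq \UB$, and $\beta \geq 1$, we have $c \in (0,1)$. Because $\lambda_1$ is now the smallest component, it suffices to prove $\lambda_1 \geq c^{d-1}/d!$, after which the bound transfers to every $\lambda_i$ automatically.

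I would then apply the hypothesis to the initial-segment index sets $I_k \coloneqq \{1,\ldots,k\}$ for $k = 1, \ldots, d-1$. Writing $s_k \coloneqq \sum_{i=1}^k \lambda_i$, the ordering gives $\min_{j \notin I_k} \lambda_j = \lambda_{k+1}$, so the assumption becomes $s_k \geq c \cdot \lambda_{k+1}$, i.e.\ $\lambda_{k+1} \leq s_k / c$. This yields the recursion
\[
s_{k+1} \;=\; s_k + \lambda_{k+1} \;\leq\; s_k \cdot \frac{c+1}{c}.
\]
Iterating over $k = 1, \ldots, d-1$ and using $s_d = \sum_{i=1}^d \lambda_i = 1$ gives $1 \leq s_1 \cdot \left(\tfrac{c+1}{c}\right)^{d-1}$, hence $\lambda_1 = s_1 \geq \left(\tfrac{c}{c+1}\right)^{d-1} = \tfrac{c^{d-1}}{(c+1)^{d-1}}$.

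To match the form stated in the lemma, I would use $c < 1$ to deduce $(c+1)^{d-1} < 2^{d-1} \leq d!$, where the last inequality is an easy induction on $d \geq 1$. Combining this with the previous step yields $\lambda_1 \geq c^{d-1}/d!$, and, as $\lambda_1$ was the smallest entry, the same bound holds for every~$\lambda_i$. Finally, this lower bound is strictly positive, so every $\lambda \in \Compact$ has all components strictly positive, i.e.\ $\Compact \subseteq \int(\Lambda)$. The argument is essentially telescoping of a recursion on sorted partial sums; I do not anticipate a genuinely hard step, only a small cosmetic passage from the tight bound $\left(\tfrac{c}{c+1}\right)^{d-1}$ to the looser closed-form $c^{d-1}/d!$ that the lemma states.
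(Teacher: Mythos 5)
Your proof is correct and reaches the stated bound, following the same overall strategy as the paper -- sort the components, apply the hypothesis only to the initial-segment index sets $\{1,\ldots,k\}$, and telescope -- but the telescoping itself is genuinely different. The paper bounds $\sum_{i=1}^k \lambda_i \leq k\,\lambda_k$ to turn the hypothesis into the component-wise recursion $\lambda_k \geq \frac{c}{k}\,\lambda_{k+1}$, anchors it at $\lambda_d \geq \frac{1}{d}$ (from $d\lambda_d \geq \sum_i \lambda_i = 1$), and multiplies out so that the factor $\frac{1}{d!}$ emerges directly as the product $\frac{1}{d}\cdot\frac{1}{d-1}\cdots\frac{1}{1}$. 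You instead keep the partial sums $s_k$ and iterate $s_{k+1} \leq \frac{c+1}{c}\,s_k$ down from $s_d = 1$, which yields the sharper intermediate bound $\lambda_1 \geq \left(\frac{c}{c+1}\right)^{d-1}$; the $d!$ then enters only as a deliberate loosening via $(c+1)^{d-1} \leq 2^{d-1} \leq d!$. Both arguments are equally elementary; yours has the minor advantage of exhibiting a tighter constant (which could in principle be propagated into the definition of $\lb$ and hence the grid $\Grid$, though this changes nothing asymptotically), at the minor cost of the extra verification $2^{d-1} \leq d!$. One cosmetic point: for $d=1$ your strict inequality $(c+1)^{d-1} < 2^{d-1}$ degenerates to an equality, so it should be stated with $\leq$, but that case is vacuous anyway. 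Your closing observation that strict positivity of all components gives $\Compact \subseteq \int(\Lambda)$ matches the paper's.
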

\begin{proof}
	Without loss of generality, let~$\lambda \in \Lambda$ such that $\lambda_1 \leq \lambda_2 \leq \ldots \leq \lambda_d$. Otherwise, the objective functions may be reordered accordingly. Then, by assumption, 
	\begin{align*}
	\sum_{i = 1}^k \lambda_i \geq \frac{\varepsilon' \cdot \LB}{\beta \cdot \UB} \cdot  \min_{j \in \{k+1,\ldots,d\}} \lambda_j = \frac{\varepsilon' \cdot \LB}{\beta \cdot \UB} \cdot  \lambda_{k+1}
	\end{align*}
	holds true for $k = 1,\ldots, d-1$. Thus,
	with~$d \cdot \lambda_d \geq \sum_{i=1}^d \lambda_i = 1$, this implies that~$\lambda_d \geq \frac{1}{d}$ and, thus, for $k = 1, \ldots, d-1$,
 \begin{align*}
    \lambda_k \geq \frac{1}{k} \cdot \frac{\varepsilon' \cdot \LB}{\beta \cdot \UB} \cdot \lambda_{k+1} \geq \frac{1}{d} \cdot \frac{1}{d-1} \cdot \ldots \cdot \frac{1}{k} \cdot \left( \frac{\varepsilon' \cdot \LB}{\beta \cdot \UB} \right)^{d-k}.
    \end{align*}
	Since~$\frac{\varepsilon' \cdot \LB}{\beta \cdot \UB} \leq 1$, the claim follows.	
	Note that $\lambda_i \geq \frac{1}{d!} \cdot \left(\frac{\varepsilon' \cdot \LB}{\beta \cdot \UB}\right)^{d-1}$ for $i = 1,\ldots, d$ and $\sum_{i=1}^d \lambda_i = 1$ imply that $\lambda_i  = 1 - \sum_{j \neq i} \lambda_j \leq 1 - (d-1) \cdot \frac{1}{d!} \cdot \left(\frac{\varepsilon' \cdot \LB}{\beta \cdot \UB}\right)^{d-1} <1$. Consequently, $\Compact \subseteq \int(\Lambda)$.
 
\end{proof}
It remains to choose $\varepsilon'$, $\beta \geq 1$, and the grid size appropriately: Given~$\ALG_{\alpha}$ for some~$\alpha \geq 1$ and $0 < \varepsilon < 1$, set~$\varepsilon' \coloneqq \sqrt{1 + \varepsilon} - 1 < 1$ and~$\beta \coloneqq (1 + \varepsilon') \cdot \alpha$. Further, define
\begin{align}\label{eq:Approx+Convex:lb}
	\lb \coloneqq \frac{1}{d!} \left( \frac{\varepsilon' \cdot \LB}{\beta \cdot \UB} \right)^{d-1}
\end{align}
to be the bound on the components of the weight vectors~$\lambda \in \Compact$ provided by Lemma~\ref{lem:Approx+Convex:bounds-on-weight-set} and define the grid
\begin{align}\label{eq:Approx+Convex:grid-finite}
\Grid \coloneqq \begin{pmatrix*}[l]  & & \lambda'_i = (1 + \varepsilon')^{a_i}, \\
\frac{\lambda'}{\norm{\lambda'}_1}  & \st & \log_{1 + \varepsilon'}(\lb) \leq a_i \leq \log_{1+\varepsilon'}(1 - (d-1) \cdot \lb ) + 1 \\
& &a_i \in \Z, i =1,\ldots, d \end{pmatrix*} \subseteq \Lambda.
\end{align}
Note that grid is constructed based on $\varepsilon'$ instead of $\varepsilon$ as done in \eqref{eq:Approx+Convex:grid-infinite}. Figure~\ref{fig:Approx+Convex:Grid+Projectionregion} illustrates the grid~$\Grid$ over~$\Compact$. 
\begin{figure}
	\centering
    \includegraphics[width = .8\textwidth]{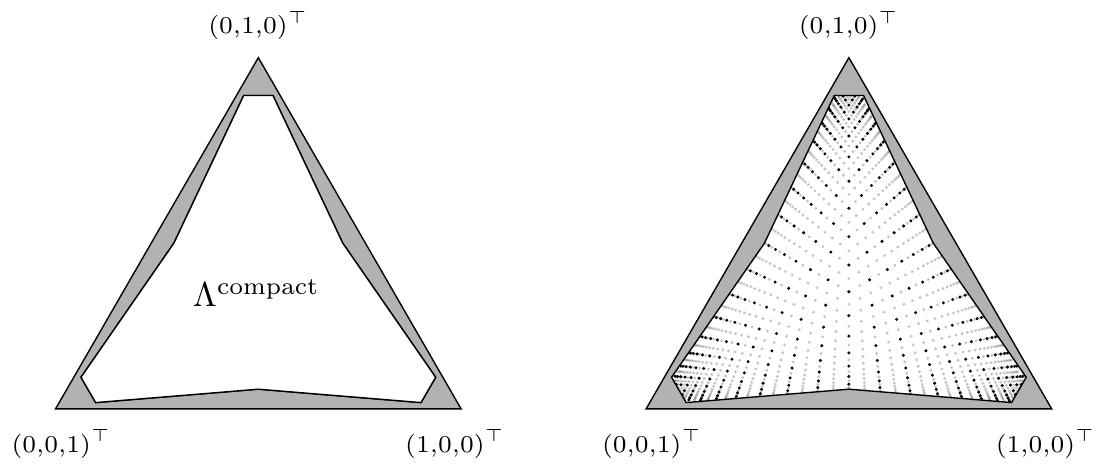}
	\caption{(left) Illustration of $\Compact$, see \cite{Helfrich:mp-approximation} for further details. (right) Illustration of the grid~$\Grid$ as defined in~\eqref{eq:Approx+Convex:grid-finite}. Grid weight vectors~$\lambda'$ of the (multiplicative) discretization based on $1 + \varepsilon$ are indicated with black dots. Light gray dots represent the remaining grid weight vectors of the (multiplicative) discretization based on $(1 + \varepsilon') = \sqrt{1 + \varepsilon}$.}
 \label{fig:Approx+Convex:Projectionregion}\label{fig:Approx+Convex:Grid+Projectionregion}
\end{figure}
The Continuity Property~\ref{property:Approx+Convex:Continuity} and the Projection Property~\ref{property:Approx+Convex:projection} can then be combined as follows:
\begin{enumerate}
	\item\label{item:Convex-Approx:weight-compact} For a weight vector~$\lambda \in \Compact$,  set~$a_i \coloneqq \lceil \log_{1+\varepsilon'}(\lambda_i) \rceil$ for $i = 1,\ldots,d$ and $\lambda' \coloneqq  ((1 + \varepsilon')^{a_1},\ldots, (1 + \varepsilon')^{a_d} )$. Then, since $\lb \leq \lambda_i \leq 1 - (d-1) \cdot \lb$, it follows that $\lambda' \in \Grid$. Moreover, $a_i - 1 \leq \log_{1+\varepsilon'}(\lambda_i) \leq a_i$ implies that
	\begin{align*}
		\frac{1}{1 + \varepsilon'} \cdot \lambda'_i = (1 + \varepsilon')^{a_i - 1} \leq \lambda_i \leq  (1 + \varepsilon')^{a_i} = \lambda'_i
	\end{align*}
	for $i = 1,\ldots, d$. Hence, $\lambda \in \Ball{\lambda'}{\varepsilon'}{\alpha}$ and, thus, every~$\alpha$-approximate solution for~$\lambda'$ is a $((1 + \varepsilon') \cdot  \alpha)$-approximation for $\lambda$. Since $1 + \varepsilon' \leq 1 + \varepsilon$, every~$\alpha$-approximate solution for~$\lambda'$ is  a $((1 + \varepsilon) \cdot  \alpha)$-approximation for $\lambda$.

	\item\label{item:Convex-Approx:weight-notin-compact} For a weight vector~$\lambda \in \Lambda \setminus \Compact$, let~$\bar{\lambda} \in \Compact$ be the weight vector obtained by the Projection Property~\ref{property:Approx+Convex:projection}. By~\ref{item:Convex-Approx:weight-compact}, there exists a grid weight vector~$\lambda' \in \Grid$ such that every $\alpha$-approximation for $\lambda'$ is a $((1 + \varepsilon') \cdot \alpha)$-approximation for~$\bar{\lambda}$. Consequently, by the choice of $\beta = (1 + \varepsilon') \cdot \alpha$ and the Projection Property~\ref{property:Approx+Convex:projection}, every $\alpha$-approximation for $\lambda'$ is a $((1 + \varepsilon') \cdot \alpha + \varepsilon')$-approximation for $\lambda$. Since
	\begin{align*}
		(1 + \varepsilon') \cdot \alpha + \varepsilon' \leq   (1 + \varepsilon') \cdot (1 + \varepsilon') \cdot \alpha = (1 + \varepsilon) \cdot \alpha,
	\end{align*}
	every~$\alpha$-approximation for $\lambda'$ is a $((1 + \varepsilon) \cdot \alpha)$-approximation for~$\lambda$. 
\end{enumerate}
These arguments result in a feasible adaption of the algorithm of~\cite{Papadimitriou+Yannakakis:multicrit-approx} for computing approximation sets: Given an instance of a $d$-objective minimization/maximization problem, $\ALG_\alpha$, and $0 < \varepsilon < 1$, compute~$\LB$, $\UB$, $\varepsilon'$, and $\beta$ and construct the grid~$\Grid$. Then, call~$\ALG_{\alpha}$ for each grid weight vector~$\lambda' \in \Grid$ and collect all solutions. By the argumentation outline so far, the computed set of solutions constitutes a $((1 + \varepsilon) \cdot \alpha)$-convex approximation set. This approach essentially coincides with the one provided in~\cite{Helfrich:mp-approximation}. The running time depends linearly on number of calls to $\ALG_{\alpha}$, which coincides with the cardinality of the grid~$\Grid$. The next result shows that the cardinality of~$\Grid$ is indeed bounded polynomially in the instance size and $\frac{1}{\varepsilon}$.
\begin{lemma}\label{lem:Approx+Convex:Cardinality-grid}
	For $\alpha \geq 1$ and $0 < \varepsilon < 1$, define $\Grid$ as in \eqref{eq:Approx+Convex:grid-finite}. Then,
	\begin{align*}
	\lvert \Grid \rvert \in \mathcal{O} \left( \left( \frac{1}{\varepsilon} \cdot \log \frac{1}{\varepsilon} + \frac{1}{\varepsilon} \cdot \log \frac{\UB}{\LB} + \frac{1}{\varepsilon} \cdot \log \alpha \right)^{d-1} \right).
	\end{align*}
\end{lemma}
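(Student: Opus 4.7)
The plan is to bound the number of admissible integer tuples $(a_1,\ldots,a_d)$, collapse the redundancy introduced by the normalization $\lambda'/\norm{\lambda'}_1$ to gain the exponent reduction from $d$ to $d-1$, and finally translate the asymptotic estimate from the parameter $\varepsilon'$ back to $\varepsilon$.

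First I would count the number of admissible exponents for a single coordinate. By the definition of $\Grid$, each $a_i$ is an integer in the interval $[\log_{1+\varepsilon'}(\lb),\, \log_{1+\varepsilon'}(1-(d-1)\lb)+1]$, so its range contains at most
\begin{equation*}
    R \;\coloneqq\; \log_{1+\varepsilon'}\!\Bigl(\tfrac{1-(d-1)\lb}{\lb}\Bigr) + 2 \;\leq\; \log_{1+\varepsilon'}(1/\lb) + 2
\end{equation*}
integers. Then I would exploit normalization: two tuples $(a_1,\ldots,a_d)$ and $(a_1+c,\ldots,a_d+c)$ produce the same point in $\Grid$, so each element of $\Grid$ is determined by the $(d-1)$ differences $a_1-a_d,\ldots,a_{d-1}-a_d$, each lying in an interval of at most $2R+1$ integer values. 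This yields $\lvert\Grid\rvert \in \O(R^{d-1})$, already producing the required exponent $d-1$.

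The main analytic step is to bound $R$ in terms of $\varepsilon$, $\alpha$, $\LB$, $\UB$. Substituting $\lb=\tfrac{1}{d!}\bigl(\tfrac{\varepsilon'\LB}{\beta\UB}\bigr)^{d-1}$ with $\beta=(1+\varepsilon')\alpha$ gives
\begin{equation*}
    \log(1/\lb) \;=\; \log(d!) + (d-1)\bigl(\log((1+\varepsilon')\alpha) + \log(\UB/\LB) + \log(1/\varepsilon')\bigr),
\end{equation*}
which, since $d$ is constant and $1+\varepsilon'\leq 2$, lies in $\O\bigl(\log\alpha + \log(\UB/\LB) + \log(1/\varepsilon')\bigr)$. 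Using the elementary bounds $\ln(1+x)\geq x/(1+x)\geq x/2$ for $x\in(0,1]$ together with $\varepsilon'=\sqrt{1+\varepsilon}-1 = \varepsilon/(1+\sqrt{1+\varepsilon})\geq \varepsilon/3$, one gets both $1/\ln(1+\varepsilon') \in \O(1/\varepsilon)$ and $\log(1/\varepsilon') \in \O(\log(1/\varepsilon))$. Combining yields $R \in \O\bigl(\tfrac{1}{\varepsilon}(\log\tfrac{1}{\varepsilon}+\log\tfrac{\UB}{\LB}+\log\alpha)\bigr)$, and raising to the $(d-1)$-th power delivers the claimed asymptotics.

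The only non-routine point is the reduction from $R^d$ to $R^{d-1}$: it is tempting to just multiply the per-coordinate ranges and overshoot by a factor of $R$. Once one observes that $\Grid$ is defined in terms of normalized vectors, the shift-invariance immediately cuts one degree of freedom. All remaining work consists of elementary logarithmic estimates and the substitutions $\varepsilon'\gtrsim \varepsilon$ and $\ln(1+\varepsilon')\gtrsim \varepsilon$, which I would dispatch in a single paragraph.
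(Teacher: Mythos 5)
Your proposal is correct and follows essentially the same route as the paper: both exploit the shift-invariance of the normalized exponent tuples to drop to $d-1$ degrees of freedom (the paper fixes $a_1=0$ where you parametrize by differences), then bound the per-coordinate range via the definition of $\lb$ and convert $\log_{1+\varepsilon'}$ into a $\frac{1}{\varepsilon}$ factor using an elementary lower bound on $\log(1+\varepsilon')$. The only cosmetic difference is the choice of elementary inequality ($\ln(1+x)\geq x/(1+x)$ versus the paper's convexity bound $2^{\varepsilon}\leq 1+\varepsilon$), which is immaterial.
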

\begin{proof}
Let, as above, $\varepsilon' = \sqrt{1 + \varepsilon} - 1$ and $\beta = (1 + \varepsilon') \cdot \alpha$. Note that
\begin{align*}
\frac{ \left( (1 + \varepsilon')^{a_1} , \ldots, (1 + \varepsilon')^{a_d} \right) }{\norm{\left( (1 + \varepsilon')^{a_1} , \ldots, (1 + \varepsilon')^{a_d} \right)}_1} =  \frac{ \left( 1, (1 + \varepsilon')^{a_2 - a_1} , \ldots, (1 + \varepsilon')^{a_d - a_1} \right) }{\norm{\left( 1, (1 + \varepsilon')^{a_2 - a_1} , \ldots, (1 + \varepsilon')^{a_d - a_1} \right)}_1}.
\end{align*}
Hence, we can assume that $a_1 = 0$, so the cardinality of $\Grid$ is bounded by
\begin{align*}
&\left( \lceil \log_{1 + \varepsilon'}\left( 1 - (d - 1) \cdot \lb \right ) + 1 \rceil - \lfloor \log_{1 + \varepsilon'} ( \lb ) \rfloor \right)^{d-1} \\
&\in \mathcal{O} \left( \log_{1 + \varepsilon'} \left( \frac{1 - (d-1) \cdot \lb}{\lb} \right)^{d-1} \right) \\
&= \mathcal{O} \left( \log_{1 + \varepsilon'} \left( \frac{1 - \frac{d-1}{d!} \frac{\varepsilon' \cdot \LB}{ \beta \cdot \UB}} {\frac{1}{d!} \frac{\varepsilon' \cdot \LB}{ \beta \cdot \UB}} \right)^{d-1} \right) \\
&= \mathcal{O} \left(  \log_{1 + \varepsilon'} \left( \frac{ \beta \cdot \UB}{\varepsilon' \cdot \LB} \right)^{d-1}\right) \\
&= \mathcal{O} \left( \left(  \frac{ \log \frac{1}{\sqrt{1 + \varepsilon}-1}}{ \frac{1}{2} \log (1 + \varepsilon)}  + \frac{ \log \frac{\UB}{\LB}}{ \frac{1}{2} \log (1 + \varepsilon)} + \frac{\log \alpha}{ \frac{1}{2} \log (1 + \varepsilon)}\right)^{d-1} \right) \\
&= \mathcal{O}  \left( \left( \frac{1}{\varepsilon} \cdot \log \frac{1}{\varepsilon} + \frac{1}{\varepsilon} \cdot \log \frac{\UB}{\LB} + \frac{1}{\varepsilon} \cdot \log \alpha \right)^{d-1} \right).
\end{align*}
Hereby, note that $$2^{\varepsilon} = 2^{(1 - \varepsilon) \cdot 0 + \varepsilon \cdot 1} \leq (1 - \varepsilon) \cdot 2^0 + \varepsilon \cdot 2^1 = 1 + \varepsilon$$
by convexity of exponential functions and $0 \leq \varepsilon \leq 1$, which implies that~$\varepsilon \leq \log(1 + \varepsilon)$.  
\end{proof}

\subsection{The Rounding Schemes}\label{sec:rounding-schemes}
Next, we propose two  rounding schemes for weight vectors inspired by the Continuity and the Projection Property in order to guarantee that~$\ALG_\alpha$ is called for grid weight vectors~$\lambda' \in \Grid$ only:
\begin{enumerate}
    \item Given $\delta > 0$ and a weight vector~$\lambda \in \Compact$, \texttt{GridRounding}{($\lambda$,$\delta$)} returns a weight vector~$\lambda' \in \Lambda$ such that, for~$\gamma \geq 1$, it holds that  $\lambda \in \Ball{\lambda'}{\gamma}{\delta}$. In particular, if $\delta \in \{\varepsilon, \varepsilon'\}$, it holds that $\lambda' \in \Grid$. 
    
    \item Given a weight vector~$\lambda \in \Lambda$, $\varepsilon'$, $\beta$, $\LB$, and $\UB$, \texttt{BoundaryRounding}($\lambda$, $\varepsilon'$, $\beta$, $\LB$, $\UB$) returns a weight vector~$\lambda' \in \Compact$ and a boolean~$\flag$. If $\flag = \text{FALSE}$, it holds that $\lambda' = \lambda$, so $\lambda \in \Compact$ and \texttt{GridRounding}{($\lambda$,$\varepsilon$)} is applied afterwards. If $\flag = \text{TRUE}$, it holds that $\lambda \in \Lambda \setminus \Compact$ and $\lambda' \in \Compact$ such that every~$\beta$-approximation for $\lambda'$ is a $(\beta + \varepsilon')$-approximation for $\lambda$. In this case, \texttt{GridRounding}{($\lambda$,$\varepsilon'$)} is applied afterwards.
\end{enumerate}
Applying both rounding schemes before calling~$\ALG_\alpha$ guarantees that the total number of calls to~$\ALG_\alpha$ is polynomially in the instance size and $\frac{1}{\varepsilon}$, which lays the foundation for obtaining a running time polynomial in the instance size and $\frac{1}{\varepsilon}$ given that~$\ALG_\alpha$ runs in polynomial time. 

\subsubsection{Grid Rounding}
Given $\delta > 0$ and a weight vector~$\lambda \in \Compact$,  the \texttt{GridRounding} scheme returns a weight vector~$\lambda' \in \Lambda$ such that, for $\gamma \geq 1$, it holds that $\lambda \in \Ball{\lambda'}{\gamma}{\delta}$. 
This is simply done by determining~$\bar{\lambda}_i \coloneqq \lceil \log_{1 + \delta} (\lambda_i) \rceil$ for $i = 1,\ldots, d$ and determining~$\lambda'$ as the projection of~$\bar{\lambda}$ onto the weight set. Algorithm~\ref{alg:Approx+Convex:GridRounding} summarizes this.
\begin{algorithm}[tb]
\small
	\begin{algorithmic}[1]
	
	\Require{A weight vector $\lambda \in \Compact$ and $\delta > 0$.}
	
	\Ensure{A weight vector $\lambda' \in \Grid$ such that, for $\gamma \geq 1$, every $\gamma$-approximation for $\lambda'$ is a $((1 + \delta) \cdot \gamma)$-approximation for $\lambda$.}
	
	\For{$i=1,\ldots, d$}
		\State$\bar{\lambda}_i \leftarrow (1 + \delta)^{\lceil \log_{1 + \delta}(\lambda_i) \rceil }$;\label{alg:Approx-Convex:GridRounding:Bisection}
	\EndFor
 
	\State $\lambda' \leftarrow \frac{\bar{\lambda}}{\norm{\bar{\lambda}}_1}$;
 
	\State \Return  $\lambda'$;
 \end{algorithmic}
	\caption{Rounding scheme~$\texttt{GridRounding}(\lambda,\delta)$.}
	\label{alg:Approx+Convex:GridRounding}
\end{algorithm}
\begin{proposition}\label{prop:Approx+Convex:GridRoundingCorrect+RunTime}
	Algorithm~\ref{alg:Approx+Convex:GridRounding} returns a weight vector~$\lambda' \in \Compact$ such that, for $\gamma \geq 1$, every $\gamma$-approximation for $\lambda'$ is a $(1 + \delta)\cdot\gamma$-approximation for $\lambda$ in time
	\begin{align*}
	\mathcal{O}\left( \log \left( \frac{1}{ \log(1 + \delta) \cdot \lb} \right)\right),
	\end{align*}
	where~$\lb$ is defined as in~\eqref{eq:Approx+Convex:lb}.
\end{proposition}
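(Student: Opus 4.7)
The plan is to derive the approximation guarantee, the membership claim, and the running-time bound all from the single pointwise estimate that the rounding step in Algorithm~\ref{alg:Approx+Convex:GridRounding} produces.

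First, I would verify this pointwise estimate. Since $\log_{1+\delta}(\lambda_i) \leq \lceil \log_{1+\delta}(\lambda_i)\rceil \leq \log_{1+\delta}(\lambda_i) + 1$ and $t \mapsto (1+\delta)^{t}$ is monotone, the assignment $\bar{\lambda}_i = (1+\delta)^{\lceil \log_{1+\delta}(\lambda_i)\rceil}$ immediately gives
\[
    \lambda_i \;\leq\; \bar{\lambda}_i \;\leq\; (1+\delta)\,\lambda_i, \qquad i = 1,\ldots,d.
\]

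Next, I would use this estimate to place $\lambda$ inside $\Ball{\lambda'}{\gamma}{\delta}$, so that the Continuity Property~\ref{property:Approx+Convex:Continuity} can be applied. Setting $s \coloneqq \norm{\bar{\lambda}}_1$ and $\lambda' \coloneqq \bar{\lambda}/s$, we have $s\cdot \lambda'_i = \bar{\lambda}_i$, so the pointwise estimate rewrites as
\[
    \tfrac{s}{1+\delta}\,\lambda'_i \;\leq\; \lambda_i \;\leq\; s\cdot\lambda'_i, \qquad i = 1,\ldots,d.
\]
Rescaling (equivalently, using $\lambda = \lambda/\norm{\lambda}_1$, since $\lambda \in \Lambda$), this exhibits $\lambda$ as the normalization of a vector in the range that defines $\Ball{\lambda'}{\gamma}{\delta}$. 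Invoking the Continuity Property~\ref{property:Approx+Convex:Continuity} then converts this ball-membership into the claimed statement that every $\gamma$-approximation for $\lambda'$ is a $((1+\delta)\cdot\gamma)$-approximation for $\lambda$.

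To confirm $\lambda' \in \Compact$, I would use Lemma~\ref{lem:Approx+Convex:bounds-on-weight-set} to conclude $\lambda_i \in [\lb, 1-(d-1)\lb]$ for every $i$, whence $\bar{\lambda}_i$ also lies in a bounded range away from the boundary of $\Lambda$. The ratio conditions defining $\Compact$ are scale-invariant, and the multiplicative slack between $\lambda$ and $\bar{\lambda}$ is absorbed by the choice of $\lb$; when $\delta \in \{\varepsilon,\varepsilon'\}$, the alignment of $\bar{\lambda}_i$ with powers of $1+\delta$ additionally forces $\lambda' \in \Grid$ in the sense of~\eqref{eq:Approx+Convex:grid-finite}.

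Finally, for the running time, the only work beyond a constant number of arithmetic operations is the computation of $\lceil \log_{1+\delta}(\lambda_i)\rceil$ for each of the $d = \mathcal{O}(1)$ coordinates. Since $\lambda \in \Compact$ forces $\lambda_i \in [\lb, 1]$, the integer we seek lies in an interval of length $\mathcal{O}(\log(1/\lb)/\log(1+\delta))$, and a bisection against successive powers of $1+\delta$ locates it in $\mathcal{O}(\log(1/(\log(1+\delta)\cdot \lb)))$ comparisons, as claimed. The main subtlety I anticipate is the bookkeeping with the normalization constant $s$ when deriving ball-membership; once that is handled, the remaining assertions are routine.
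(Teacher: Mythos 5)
Your proposal is correct and follows essentially the same route as the paper's proof: the multiplicative sandwich $\tfrac{1}{1+\delta}\bar{\lambda}_i \leq \lambda_i \leq \bar{\lambda}_i$ yields $\lambda \in \Ball{\lambda'}{\gamma}{\delta}$ so that the Continuity Property~\ref{property:Approx+Convex:Continuity} gives the approximation guarantee, and the running time follows from a bisection search for the exponent over an integer range of length $\mathcal{O}\bigl(\log(1/\lb)/\log(1+\delta)\bigr)$ supplied by Lemma~\ref{lem:Approx+Convex:bounds-on-weight-set}. Your explicit handling of the normalization constant $s$ is a harmless (indeed slightly more careful) elaboration of the ball-membership step that the paper treats implicitly.
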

\begin{proof}
    Let $\lambda \in \Compact$ and $\delta > 0$. Then, Algorithm~\ref{alg:Approx+Convex:GridRounding} computes
    the weight vector~$\bar{\lambda} \coloneqq (1 + \delta)^{\lceil \log_{1 + \delta}(\lambda_i) \rceil}$ in Step~\ref{alg:Approx-Convex:GridRounding:Bisection}. Since
    \begin{align*}
    \frac{1}{1 + \delta} \cdot \bar{\lambda}_i = (1 + \delta)^{\lceil \log_{1 + \delta}(\lambda_i) \rceil - 1} &\leq (1 + \delta)^{\log_{1 + \delta}(\lambda_i)} = \lambda_i \\
    &\leq (1 + \delta)^{\lceil \log_{1 + \delta}(\lambda_i) \rceil} = \bar{\lambda}_i,
    \end{align*}
    it holds that $\lambda \in \Ball{\frac{\bar{\lambda}}{\norm{\bar{\lambda}}}}{\gamma}{\delta}$. Thus, every $\gamma$-approximation for $\lambda' = \frac{\bar{\lambda}}{\norm{\bar{\lambda}}}$ is a $((1+ \delta)\cdot \gamma)$-approximation for~$\lambda$ by the Continuity Property~\ref{property:Approx+Convex:Continuity}.
    
    Moreover, the bound~$\lb$ given in Lemma~\ref{lem:Approx+Convex:bounds-on-weight-set} allows to bound the asymptotic worst-case running time:
    Lemma~\ref{lem:Approx+Convex:bounds-on-weight-set} yields that~$\lambda_i \geq \lb$, which implies that~$\lambda_i \leq 1 - (d-1) \cdot \lb$ holds as well. Then, Step~\ref{alg:Approx-Convex:GridRounding:Bisection} is equivalent to finding, for each~$i=1,\ldots,d$, an integer
	\begin{align*}
	a_i \in \left\{ \lceil \log_{1 + \delta}(\lb) \rceil, \lceil \log_{1 + \delta}(\lb) \rceil + 1, \ldots, \lceil \log_{1+ \delta}( 1 - (d-1) \cdot \lb)  \rceil \right\}
	\end{align*}
	such that $(1 + \delta)^{a_i - 1} \leq \lambda_i \leq (1 + \delta)^{a_i}$. For the base-two logarithm and $t > 1$, it holds true that~$ \frac{3}{4} \cdot \left( 1 - \frac{1}{t} \right) < \log(t) < 2 \cdot ( t - 1)$. Since~$\lb < 1$, this implies that
	\begin{align*}
		\log_{1 + \delta}(\lb) =  - \frac{\log(\frac{1}{\lb})}{\log(1 + \delta)} > - \frac{2 \cdot (\frac{1}{\lb} - 1)}{\log(1 + \delta)}
	\end{align*}
	and, since~$1 - (d-1) \cdot \lb < 1$ as well,
	\begin{align*}
		\log_{1 + \delta} ( 1 - (d-1) \cdot \lb) &= - \frac{\log(\frac{1}{1 - (d-1) \cdot \lb})}{\log(1 + \delta)}\\
		 &< - \frac{\frac{3}{4} \cdot (1 + (1 - (d-1) \cdot \lb))}{\log(1 + \delta)} \\
		&= \frac{3}{4} \cdot \lb \cdot  \frac{d - 1}{\log(1 + \delta)} \leq \frac{d - 1}{\log(1 + \delta)}
	\end{align*}
	Hence, each integer~$a_i$ can be found by a bisection search in time
	\begin{align*}
		&\mathcal{O}\left( \log\left( \lceil \log_{1+ \delta}( 1 - (d-1) \cdot \lb)  \rceil -  \lceil \log_{1 + \delta}(\lb) \rceil \right)   \right) \\
		= &\mathcal{O}\left(\log\left(  \frac{d - 1}{\log(1 + \delta)}  + \frac{2 \cdot (\frac{1}{\lb} - 1)}{\log(1 + \delta)}  \right) \right) = \mathcal{O}\left( \log\left( \frac{1}{\log(1 + \delta)\cdot \lb } \right) \right),
	\end{align*}
	which concludes the proof.
\end{proof}
When choosing $\delta \in \{\varepsilon, \varepsilon'\}$ with $ \varepsilon' = \sqrt{1 + \varepsilon} - 1$, it can be proven similar to the proof of Lemma~\ref{lem:Approx+Convex:Cardinality-grid} that the running time of Algorithm~\ref{alg:Approx+Convex:GridRounding} resolves to
\begin{align*}
\mathcal{O} \left( \log \left( \left( \frac{1}{\varepsilon} \cdot \log \frac{1}{\varepsilon} + \frac{1}{\varepsilon} \cdot \log \frac{\UB}{\LB} + \frac{1}{\varepsilon} \cdot \log \alpha \right)^{d-1} \right) \right) = \mathcal{O} ( \log (\lvert \Grid \rvert ) ).
\end{align*}
Note that the weight vector~$\lambda$ is part of the input. Thus, the running time of Algorithm is polynomial in the encoding length of the instance \emph{and} the encoding length of the weight vector~$\lambda$.

\subsubsection{Boundary Rounding}

Given a weight vector~$\lambda \in \Lambda$, $\varepsilon'$, $\beta$, $\LB$, and $\UB$, the \texttt{BoundaryRounding} scheme returns a weight vector~$\lambda' \in \Compact$ and a boolean~$\flag$. If $\flag = \text{FALSE}$, it holds that $\lambda' = \lambda$ and, thus, $\lambda \in \Compact$. If $\flag = \text{TRUE}$, it holds that $\lambda \in \Lambda \setminus \Compact$ and $\lambda' \in \Compact$ such that every~$\beta$-approximation for $\lambda'$ is a $(\beta + \varepsilon')$-approximation for $\lambda$. 

In order to understand this rounding scheme, we need several auxiliary results, most of which have again been proven in~\cite{Helfrich:mp-approximation}. In the following, it will often be convenient to consider non-normalized weight vectors~$\lambda \in \R^d_\geqq \setminus \{0\}$. Nevertheless, rescaling of a weight vector~$\lambda$ with the positive scalar~$\sum_{i=1}^d \lambda_i > 0$ allows to reconnect all results to the weight set~$\Lambda$. The first result concerns convexity and approximation: 
\begin{lemma}[\cite{Helfrich:mp-approximation}, Lemma~4.2]\label{lem:Approx+Convex:convexity} 
	Let~$\gamma' \geq 1$ and $\Lambda' \subseteq \R^d_\geqq \setminus \{0\}$. Then, every $\gamma'$-approximation for $\Lambda'$ is a $\gamma'$-approximation for $\conv(\Lambda')$. 
\end{lemma}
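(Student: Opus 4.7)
The plan is to exploit the linearity of the weighted sum in $\lambda$. Fix an arbitrary solution $x \in X$ that is a $\gamma'$-approximation for $\Lambda'$ and let $\lambda \in \conv(\Lambda')$. By definition of the convex hull (or, if desired, by Carathéodory's theorem to obtain a finite representation of size at most $d+1$), we can write $\lambda = \sum_{k=1}^K \mu_k \, \lambda^k$ with $\lambda^k \in \Lambda'$, $\mu_k \geq 0$, and $\sum_{k=1}^K \mu_k = 1$.

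In the minimization case, the hypothesis says $(\lambda^k)^\top f(x) \leq \gamma' \cdot (\lambda^k)^\top f(x'')$ for every $k \in \{1,\ldots,K\}$ and every $x'' \in X$. Multiplying the $k$-th inequality by $\mu_k \geq 0$ and summing, the bilinearity of the inner product yields
\begin{align*}
    \lambda^\top f(x) \;=\; \sum_{k=1}^K \mu_k \,(\lambda^k)^\top f(x) \;\leq\; \gamma' \sum_{k=1}^K \mu_k \,(\lambda^k)^\top f(x'') \;=\; \gamma' \cdot \lambda^\top f(x''),
\end{align*}
which shows that $x$ is a $\gamma'$-approximation for $\lambda$. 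Since $\lambda \in \conv(\Lambda')$ was arbitrary, the claim follows. The maximization case is handled identically, replacing $\leq \gamma'$ by $\geq \frac{1}{\gamma'}$ throughout; the sign of the coefficients $\mu_k \geq 0$ preserves the direction of the inequality.

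There is essentially no obstacle here: the statement is a direct consequence of the linearity of $\lambda \mapsto \lambda^\top f(x)$ and the nonnegativity of convex combination coefficients. The only point worth writing out carefully is that $\conv(\Lambda')$ admits a finite convex representation, which is either a definitional matter or an appeal to Carathéodory's theorem. No appeal to Assumption~\ref{ass:mo} or Assumption~\ref{ass:approx-alg-weighted-sum} is needed; the result is purely about the algebraic structure of weighted sums.
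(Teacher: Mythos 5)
Your proof is correct and is exactly the standard linearity argument; the paper itself states this lemma without proof, deferring to the cited reference, and the intended argument is precisely the one you give (write $\lambda \in \conv(\Lambda')$ as a finite convex combination, multiply each hypothesis inequality by the nonnegative coefficient, and sum). Your remark that no structural assumptions on the problem are needed is also accurate.
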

The second auxiliary result explains the structure of the set~$\Compact$. Let~$\lambda$ be a weight vector whose components~$\lambda_i$ for~$i$ in some index set $\emptyset \neq I \subsetneq \{1,\ldots, d\}$ sum up to a small threshold. Further, the projection $\proj^I:\R^{d} \rightarrow \R^{d}$ that maps all components~$\lambda_i$ of a vector~$\lambda \in \R^{d}$ with indices~$i\in I$ to zero is defined by
\begin{align*}
\proj^{I}_i (\lambda) \coloneqq \begin{cases}
0, &\text { if } i \in I,\\
\lambda_i, &\text { else}.\end{cases} 
\end{align*}
Then, every approximate solution for~$\lambda$ is still an approximate solution for~$\proj^I(\lambda)$ with a \enquote*{sufficiently good} approximation guarantee. Together with Lemma~\ref{lem:Approx+Convex:convexity}, we obtain that every approximate solution for~$\lambda$ is still an approximate solution for~$\conv(\{\lambda, \proj^I(\lambda) \})$ with a \enquote*{sufficiently good} approximation guarantee. This is formally capture in the following statement.
\begin{lemma}[\cite{Helfrich:mp-approximation}, Lemma~4.3]\label{lem:ApproximationGuaranteeWithinTreshhold}
	Let $0 < \varepsilon' < 1$ and $\beta \geq 1$. Further, let $\emptyset \neq I \subsetneq \{1, \ldots, d\}$ be an index set and let~$\lambda \in \Lambda$ be a weight vector for which
	\begin{align*}
	\sum_{i \in I} \lambda_i = \frac{\varepsilon' \cdot \LB}{\beta \cdot \UB} \cdot \min_{j \notin I} \lambda_j.
	\end{align*}
	Then, every $\beta$-approximation for $\lambda$ is a $(\beta + \varepsilon')$-approximation for $\conv(\{\lambda, \proj^I(\lambda)  \} )$. 
\end{lemma}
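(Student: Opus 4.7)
The plan is to reduce the claim, via Lemma~\ref{lem:Approx+Convex:convexity}, to showing that any $\beta$-approximation $x^\beta\in X$ for $\lambda$ is already a $(\beta+\varepsilon')$-approximation for the two-element set $\{\lambda,\proj^I(\lambda)\}$; passing to the convex hull is then free. The $\lambda$-part of this is immediate from $\beta\leq\beta+\varepsilon'$, so the whole content sits in the $\proj^I(\lambda)$-part. I would present the minimization case (the maximization case is entirely symmetric with $1/(\beta+\varepsilon')$ on the right-hand side and the same bookkeeping). Note that if $\sum_{i\in I}\lambda_i=0$, then $\proj^I(\lambda)=\lambda$ and nothing needs to be proved, so from now on I assume $\sum_{i\in I}\lambda_i>0$, which by the hypothesis forces $\min_{j\notin I}\lambda_j>0$ and hence $\lambda_j>0$ for every $j\notin I$.

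The main computation would start from $\lambda^\top f(x^\beta)\leq\beta\,\lambda^\top f(x)$, split both sides into their $I$- and $(\{1,\dots,d\}\setminus I)$-parts, drop the nonnegative term $\sum_{i\in I}\lambda_i f_i(x^\beta)$ on the left, and bound $f_i(x)\leq\UB$ on the right. Substituting the standing hypothesis $\sum_{i\in I}\lambda_i=\tfrac{\varepsilon'\LB}{\beta\UB}\min_{j\notin I}\lambda_j$ collapses the extra term to exactly $\varepsilon'\cdot\LB\cdot\min_{j\notin I}\lambda_j$, yielding
\[
\sum_{j\notin I}\lambda_j f_j(x^\beta)\;\leq\;\beta\sum_{j\notin I}\lambda_j f_j(x)\;+\;\varepsilon'\cdot\LB\cdot\min_{j\notin I}\lambda_j.
\]
Whenever there exists $j\notin I$ with $f_j(x)>0$, Assumption~\ref{ass:mo} forces $f_j(x)\geq\LB$, so $\sum_{j\notin I}\lambda_j f_j(x)\geq\min_{j\notin I}\lambda_j\cdot\LB$. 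The trailing term is therefore absorbed into $\varepsilon'\sum_{j\notin I}\lambda_j f_j(x)$, which gives the desired bound $\sum_{j\notin I}\lambda_j f_j(x^\beta)\leq(\beta+\varepsilon')\sum_{j\notin I}\lambda_j f_j(x)$.

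The main obstacle, which I would handle at the end, is the degenerate case where $\sum_{j\notin I}\lambda_j f_j(x)=0$, i.e.\ $f_j(x)=0$ for all $j\notin I$. In this situation the desired inequality insists that $f_j(x^\beta)=0$ for every $j\notin I$ as well. To force this, I would feed exactly this $x$ into $x^\beta$'s $\beta$-approximation property: the right-hand side $\beta\,\lambda^\top f(x)=\beta\sum_{i\in I}\lambda_i f_i(x)$ is bounded above by $\beta\cdot\UB\cdot\sum_{i\in I}\lambda_i=\varepsilon'\LB\min_{j\notin I}\lambda_j$ (again using the hypothesis), whereas any $j\notin I$ with $f_j(x^\beta)>0$ would supply $\lambda^\top f(x^\beta)\geq\lambda_j f_j(x^\beta)\geq\LB\min_{j\notin I}\lambda_j$ via Assumption~\ref{ass:mo}. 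Since $\varepsilon'<1$, these two estimates are incompatible, so all such $f_j(x^\beta)$ vanish and the inequality reads $0\leq 0$. Combining the two cases and invoking Lemma~\ref{lem:Approx+Convex:convexity} then concludes the proof.
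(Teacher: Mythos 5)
Your minimization argument is correct and complete: the decomposition, the substitution of the threshold identity to get the additive error term $\varepsilon'\cdot\LB\cdot\min_{j\notin I}\lambda_j$, the absorption of that term via $\sum_{j\notin I}\lambda_j f_j(x)\geq\LB\cdot\min_{j\notin I}\lambda_j$ whenever some $f_j(x)>0$, and the contradiction argument in the degenerate case all go through, and the reduction to $\conv(\{\lambda,\proj^I(\lambda)\})$ via Lemma~\ref{lem:Approx+Convex:convexity} is legitimate (note the paper itself imports this lemma from the cited reference without proof, so there is no in-paper argument to compare against).

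The genuine gap is the claim that the maximization case is \enquote{entirely symmetric with the same bookkeeping}. It is not: if you mirror your absorption step literally, you arrive at
\begin{align*}
\sum_{j\notin I}\lambda_j f_j(x^\beta)\;\geq\;\frac{1}{\beta}\sum_{j\notin I}\lambda_j f_j(x)\;-\;\frac{\varepsilon'\cdot\LB}{\beta}\cdot\min_{j\notin I}\lambda_j,
\end{align*}
and absorbing the trailing term into $\frac{\varepsilon'}{\beta}\sum_{j\notin I}\lambda_j f_j(x)$ (the \emph{competitor's} value, as in your minimization case) only yields the factor $\frac{1-\varepsilon'}{\beta}$, which is strictly smaller than the required $\frac{1}{\beta+\varepsilon'}$ for every $\beta\geq 1$ and $0<\varepsilon'<1$, since $(1-\varepsilon')(\beta+\varepsilon')-\beta=\varepsilon'(1-\beta-\varepsilon')<0$. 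The correct move is to absorb the error term into the \emph{approximator's} value: writing $A\coloneqq\sum_{j\notin I}\lambda_j f_j(x^\beta)$ and $B\coloneqq\sum_{j\notin I}\lambda_j f_j(x)$, the displayed inequality reads $\beta A+\varepsilon'\cdot\LB\cdot\min_{j\notin I}\lambda_j\geq B$, so $(\beta+\varepsilon')A\geq B$ follows as soon as $A\geq\LB\cdot\min_{j\notin I}\lambda_j$, which holds whenever $f_j(x^\beta)>0$ for some $j\notin I$; the remaining case $f_j(x^\beta)=0$ for all $j\notin I$ is handled by exactly the contradiction argument you already use for the degenerate case (it forces $B=0$ for every $x$). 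So the case distinction must be made on $x^\beta$ rather than on $x$ — a repairable but real asymmetry that your write-up glosses over.
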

\noindent This suggests to define for $0 < \varepsilon' < 1$ and $\beta \geq 1$~\citep{Helfrich:mp-approximation} 
\begin{align}\label{eq:c}
	c \coloneqq \frac{\varepsilon' \cdot \LB}{\beta \cdot \UB} \in (0,1)
\end{align}
and, for each index set $\emptyset \neq I \subsetneq \{1,\ldots,d\}$,
\begin{align}\label{eq:Psets}
P_{<} (I) \coloneqq \left\{ \lambda \in \R^{d}_\geqq \st \sum_{i \in I} \lambda_i < c \cdot \lambda_j \text{ for all } j \notin I \right\}.
\end{align}
The sets~$P_\leq (I)$ and $P_= (I)$ are defined analogously by replacing \enquote{$<$} by \enquote{$\leq$} and \enquote{$=$}, respectively. Note that the sets~$P_{<}(I), P_{=}(I)$, and $P_\leq(I)$ are defined in the superset~$\R^d_\geq$ of $\Lambda$. 
Let~$\lambda \in P_<(I)$ for some index set $\emptyset \neq I \subsetneq \{1,\dots, d\}$. The next lemma shows that a weight vector~$\bar{\lambda}$ such that we can apply Lemma~\ref{lem:ApproximationGuaranteeWithinTreshhold} can be constructed easily:
\begin{lemma}[\cite{Helfrich:mp-approximation}, Lemma~4.5]\label{lem:boundaryRounding}
	Let $\emptyset \neq I \subsetneq \{1,\dots, d\}$ be a nonempty index set and let~$\lambda \in P_<(I)$. Define
	\begin{align}\label{eq:BoundaryRounding}
	\bar{\lambda}_i \coloneqq 
	\begin{cases}
	\frac{\lambda_i}{\sum_{j \in I} \lambda_j} \cdot c \cdot \min_{j \notin I} \lambda_j, &\text{ if } i \in I \text{ and } \sum_{j \in I} \lambda_j >0,\\
	\frac{1}{\lvert I \rvert} \cdot c \cdot \min_{j \notin I} \lambda_j, &\text{ if } i \in I \text{ and } \lambda_j =0 \text{ for all } j \in I,\\
	\lambda_i, &\text{ if } i \notin I.
	\end{cases}
	\end{align}
	Then, $\bar{\lambda} \in P_=(I)$ and $\lambda \in \conv(\{\bar{\lambda},\proj^I(\bar{\lambda})\})$. 
\end{lemma}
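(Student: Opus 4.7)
The plan is to verify both assertions directly from formula~\eqref{eq:BoundaryRounding}, handling the two cases of the definition of $\bar{\lambda}$ in parallel.

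For the first claim $\bar{\lambda} \in P_=(I)$, I will compute $\sum_{i \in I} \bar{\lambda}_i$ explicitly. In the generic case $\sum_{j \in I} \lambda_j > 0$, the coefficients $\lambda_i / \sum_{j \in I} \lambda_j$ sum to $1$ over $i \in I$, so the sum telescopes to $c \cdot \min_{j \notin I} \lambda_j$. In the degenerate case $\lambda_j = 0$ for all $j \in I$, the $|I|$ identical components $\tfrac{1}{|I|} \cdot c \cdot \min_{j \notin I} \lambda_j$ sum to the same value. Since $\bar{\lambda}_j = \lambda_j$ for $j \notin I$, this yields $\sum_{i \in I} \bar{\lambda}_i = c \cdot \min_{j \notin I} \bar{\lambda}_j$, which is the defining equality of $P_=(I)$.

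For the second claim $\lambda \in \conv(\{\bar{\lambda}, \proj^I(\bar{\lambda})\})$, I will exhibit an explicit coefficient $\theta \in [0,1]$: in the generic case, set $\theta := \frac{\sum_{j \in I} \lambda_j}{c \cdot \min_{j \notin I} \lambda_j}$, and in the degenerate case set $\theta := 0$. Then I will check the identity $\lambda = \theta \bar{\lambda} + (1-\theta)\proj^I(\bar{\lambda})$ componentwise. For indices $i \notin I$, both points already agree with $\lambda$, so the identity is immediate. For indices $i \in I$, the projection contributes zero and the equation reduces to $\lambda_i = \theta \bar{\lambda}_i$, which is a direct substitution from~\eqref{eq:BoundaryRounding} (the subcase $\lambda_i = 0$ within the generic case only requires $\bar{\lambda}_i = 0$, which also follows from~\eqref{eq:BoundaryRounding}). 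The membership $\theta \in [0,1)$ follows from the defining strict inequality of $P_<(I)$, namely $\sum_{j \in I} \lambda_j < c \cdot \min_{j \notin I} \lambda_j$, combined with $\lambda \geqq 0$.

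The only real subtlety, and hence the main obstacle, is the degenerate branch when $\sum_{j \in I} \lambda_j = 0$: the formula from the generic case would divide by zero. In that situation, however, $\lambda$ already equals $\proj^I(\bar{\lambda})$, so the choice $\theta = 0$ trivially yields the desired convex combination. One should also note that $\min_{j \notin I} \lambda_j > 0$ is automatic from $\lambda \in P_<(I)$, since any $\lambda_j = 0$ with $j \notin I$ would force $\sum_{i \in I} \lambda_i < 0$ by the defining inequality, contradicting non-negativity; this positivity is what makes the coefficient $\theta$ in the generic case well-defined.
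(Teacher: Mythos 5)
Your proof is correct. The paper does not include its own proof of Lemma~\ref{lem:boundaryRounding} (it is imported from the cited reference), and your direct componentwise verification --- summing the normalized coefficients over $I$ to obtain $\sum_{i\in I}\bar{\lambda}_i = c\cdot\min_{j\notin I}\bar{\lambda}_j$, exhibiting $\theta=\sum_{j\in I}\lambda_j/(c\cdot\min_{j\notin I}\lambda_j)\in[0,1)$ as the convex coefficient with the degenerate branch handled by $\theta=0$, and noting that $\min_{j\notin I}\lambda_j>0$ is forced by $\lambda\in P_<(I)$ together with $\lambda\geqq 0$ --- is exactly the natural argument and is complete.
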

Hence, when given a weight vector~$\lambda \in P_<(I)$ for some index set~$I$, it can be \enquote{rounded} to a weight vector~$\bar{\lambda} \in P_=(I)$ using~\eqref{eq:BoundaryRounding}. A $\beta$-approximation for$~\bar{\lambda}$ is then a $(\beta + \varepsilon')$-approximation for~$\lambda$ due to Lemma~\ref{lem:Approx+Convex:convexity} and Lemma~\ref{lem:ApproximationGuaranteeWithinTreshhold}. Hereby, note that weight vectors can be contained in $P_<(I) \cap P_<(I')$ of two (or more) different index sets~$I$ and~$I'$ and Construction~\eqref{eq:BoundaryRounding} for~$I$ might result in a weight vector that is still contained in~$P_<(I')$. Upon that, applying Construction~\eqref{eq:BoundaryRounding} for~$I'$ might result in a weight vector~$\lambda'$ that indeed satisfies~$\lambda' \notin P_<(I) \cup P_<(I')$, but does \emph{not necessarily} satisfy~$\lambda' \in P_=(I)$ anymore, see Figure~\ref{fig:IllustrationRounding}. So, we would \enquote{lose} applicability of Lemma~\ref{lem:ApproximationGuaranteeWithinTreshhold} for~$I$, which would impede correctness.
Nevertheless, Algorithm~\ref{alg:BoundaryRounding} inductively applies this rounding idea \emph{in a particular order} to obtain a weight vector~$\bar{\lambda} \in \Compact$ after at most~$d$ rounding steps such that the Projection Property holds: every $\beta$-approximation for$~\bar{\lambda}$ is a $(\beta + \varepsilon')$-approximation for~$\lambda$. To see that such an order exists, the following observation is crucial:
\begin{observation}\label{obs:sorting-P_<(I)}
	If $\lambda \in \R^d_\geqq$ such that $\lambda_1 \leq \lambda_2 \leq \ldots \leq \lambda_d$ and $\lambda \in P_<(I)$ for some index set $\emptyset \neq I \subsetneq \{1,\dots,d\}$, then $I = \{1,\ldots, \lvert I \rvert\}$.
\end{observation}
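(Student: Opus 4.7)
The plan is a short proof by contradiction that directly exploits the sorting of the components together with the strict inequality in the definition of $P_{<}(I)$ and the fact that $c \in (0,1)$.

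First I would assume, for the sake of a contradiction, that $I \neq \{1,\ldots,\lvert I \rvert\}$. Since $|I|$ is fixed, this means there must exist indices $i \in I$ and $j \notin I$ with $j < i$; the point is that some \enquote{large} component (by position in the sorted order) sits inside $I$ while a smaller one lies outside. By the assumed ordering $\lambda_1 \leq \lambda_2 \leq \ldots \leq \lambda_d$, this gives $\lambda_j \leq \lambda_i$.

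Next I would combine the two inequalities. Since $i \in I$ and all components of $\lambda$ are nonnegative, $\lambda_i \leq \sum_{k \in I} \lambda_k$. Since $\lambda \in P_{<}(I)$ and $j \notin I$, the defining inequality of $P_{<}(I)$ yields $\sum_{k \in I} \lambda_k < c \cdot \lambda_j \leq c \cdot \lambda_i$. Chaining these gives $\lambda_i < c \cdot \lambda_i$, or equivalently $(1-c)\,\lambda_i < 0$. Because $c \in (0,1)$ by \eqref{eq:c}, we have $1-c > 0$, and so $\lambda_i < 0$, which contradicts $\lambda \in \R^{d}_{\geqq}$. (If $\lambda_i = 0$, the strict inequality $\lambda_i < c\lambda_i$ already reads $0 < 0$, which is still a contradiction.)

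There is essentially no obstacle here: the result is a short structural consequence of sorting plus the strict inequality with a factor $c < 1$. The only thing to be slightly careful about is handling the boundary case $\lambda_i = 0$, which is why I would phrase the final step via $(1-c)\lambda_i < 0$ rather than dividing by $\lambda_i$.
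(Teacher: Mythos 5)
Your proof is correct. The paper states this as an Observation without providing a proof, and your argument is precisely the natural one that the authors implicitly rely on: if $I \neq \{1,\ldots,\lvert I\rvert\}$ then, since $\lvert I\rvert$ elements cannot all fit in $\{1,\ldots,\lvert I\rvert\}$ without equality, there exist $i \in I$ and $j \notin I$ with $j < i$, and the chain $\lambda_i \leq \sum_{k\in I}\lambda_k < c\,\lambda_j \leq c\,\lambda_i$ with $c \in (0,1)$ and $\lambda_i \geq 0$ yields the desired contradiction; your care with the boundary case $\lambda_i = 0$ is appropriate.
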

Consequently, we can check if a weight vector~$\lambda$ is contained in~$P_<(I)$ for some index set~$I \subsetneq \{1,\dots,d\}$ and, if yes, we can determine all such index sets, by identifying a permutation~$\sigma$ of $\{1,\ldots,d\}$ such that $\lambda_{\sigma(1)} \leq \lambda_{\sigma(2)} \leq \ldots \leq \lambda_{\sigma(d)}$ holds true and checking, for~$k=1, \ldots,d-1$, whether~$\sum_{i=1}^k \lambda_{\sigma(i)} < c \cdot  \lambda_{\sigma(k+1)}$ holds. It can be shown that the \enquote{rounding}~\eqref{eq:BoundaryRounding} of a weight vector~$\lambda$ with $\lambda_1 \leq \lambda_2 \leq \ldots \leq \lambda_d$ according to some index set $\{1, \ldots, k\}$ preserves the sorting of the components and whether the weight vector is not contained in some $P_{<}(\{1, \ldots, k'\})$ for every~$1 \leq k' < k$, see Appendix~\ref{lem:P_<(I)-rounding-invariant}.

\medskip

So, in summary, \texttt{BoundaryRounding} works as follows: Given a weight vector~$\lambda \in \R^d_\geqq$, first find a permutation~$\sigma$ of $\{1,\ldots, d\}$ such that $\lambda_{\sigma(1)} \leq \lambda_{\sigma(2)} \leq \ldots \leq \lambda_{\sigma(d)}$. Then, for each $k = 1 ,\ldots,d-1$, check whether $\sum_{i=1}^k \lambda_{\sigma(i)} < c \cdot \lambda_{\sigma(k+1)}$ holds. If yes, apply the \enquote{rounding}~\eqref{eq:BoundaryRounding} with index set~$\{\sigma(1),\ldots,\sigma(k)\}$ and continue with the updated weight vector and~$k+1$. Otherwise, continue immediately with the next iteration~$k+1$.
At the end, it is left to normalize the weight vector to obtain a weight vector~$\lambda' \in \Lambda$ such that $\lambda' \notin P_<(I)$ for every index set $I \subseteq \{1,\ldots, d\}$. Let $k^1,\ldots k^L$, $1 \leq L \leq d-1$ be the iterations in which a rounding has been applied, and set~$I^\ell \coloneqq \{\sigma(1),\ldots\sigma( k^\ell)\}$. Then, it is guaranteed that the weight vector~$\lambda'$ is contained in $P_=(I^\ell)$ for $\ell = 1, \ldots, L$ and, in particular, $\lambda' \in \conv\left( \left\{\lambda', \proj^{I^1}(\lambda') , \ldots,\proj^{I^L}(\bar{\lambda}) \right\}\right)$. Consequently, applying Lemma~\ref{lem:ApproximationGuaranteeWithinTreshhold} and Lemma~\ref{lem:Approx+Convex:convexity} yields that every~$\beta$-approximation for~$\lambda' \in \Compact$ is a $(\beta + \varepsilon')$-approximation for $\lambda$. Figure~\ref{fig:IllustrationRounding} illustrates this procedure, and Algorithm~\ref{alg:BoundaryRounding} summarizes it.
Note that, if no rounding has been applied at all during these steps, then~$\lambda' = \lambda \in \Compact$. Thus, we can check whether $\lambda \in \Compact$ by applying \texttt{BoundaryRounding}  as well. The next result states correctness as well as an asymptotic worst-case running time analysis of Algorithm~\ref{alg:BoundaryRounding}. Since the proof is rather technical, it is stated in Appendix~\ref{proof-prop-correctness-boundary}.
\begin{algorithm}[tb]
\small
\begin{multicols}{2}
 \begin{algorithmic}[1]
	\Require{A weight vector $\lambda \in \Lambda$, $\beta \geq 1$, and $0 <\varepsilon' <1$, lower and upper bounds $\LB$ and $\UB$ such that $f_i(x) \in \{0\} \cup [\LB, \UB]$ for all $i=1, \dots, d$ and all $x \in X$.}
	
	\Ensure{A weight vector $\lambda' \in \Compact$ such that every $\beta$-approximation for $\bar{\lambda}$ is a $(\beta + \varepsilon')$-approximation for $\lambda$. A boolean $\flag$ that indicates whether a rounding has been applied.}

	\State $\sigma \leftarrow$ permutation such that 
 
    $\lambda_{\sigma(1)} \leq \lambda_{\sigma(2)}\leq \ldots \leq \lambda_{\sigma(d)}$; \label{alg:BoundaryRounding:Sigma1}
	
	\State $\bar{\lambda}_i \leftarrow \lambda_{\sigma(i)}$ for $i =1, \dots, d$;\label{alg:BoundaryRounding:Sigma2}
	
	\State $c \leftarrow \frac{\varepsilon' \cdot \LB} {\beta \cdot \UB} \in (0,1)$;
	
	\State $\flag \leftarrow \textup{\text{FALSE}}$;
	
	\For{$k = 1,\ldots,d-1$\label{alg:BoundaryRounding:BeginForLoop}}
		\If{$\sum_{j=1}^k \bar{\lambda}_j < c \cdot \bar{\lambda}_{k+1}$\label{alg:BoundaryRounding:IfConditionPI}}
		      \State $\flag \leftarrow \textup{TRUE}$;
			
			\State $a \leftarrow \sum_{i = 1}^k \bar{\lambda}_i$;
			
			\For{$i=1, \ldots, k$}				
				\If{$a >0$}
					\State $\bar{\lambda}_i \leftarrow \frac{\bar{\lambda}_i}{a} \cdot  c \cdot \bar{\lambda}_{k+1}$;
				\Else
					\State $\bar{\lambda}_i \leftarrow \frac{c}{k+1} \cdot \bar{\lambda}_{k+1}$;
     \EndIf
			\EndFor	
		\EndIf
		\label{alg:BoundaryRounding:EndForLoop}
    \EndFor
	\State $\sigma^{-1} \leftarrow $ Inverse permutation of $\sigma$;
	
	\State $\tilde{\lambda}_i \leftarrow \bar{\lambda}_{\sigma^{-1}(i)}$ for $i = 1, \ldots, d$;\label{alg:BoundaryRounding:InverseSorting}
	
	\State $\lambda' \leftarrow \frac{\tilde{\lambda}}{\norm{\tilde{\lambda}}_1}$;
	
	\State \Return $\lambda'$, $\flag$
 \end{algorithmic} \end{multicols}
	\caption{Rounding scheme~\texttt{BoundaryRounding}($\lambda$, $\varepsilon'$, $\beta$, $\LB$, $\UB$).}
	\label{alg:BoundaryRounding}
\end{algorithm}

\begin{figure}[tb]
	\centering
    \includegraphics[width = 0.8\textwidth]{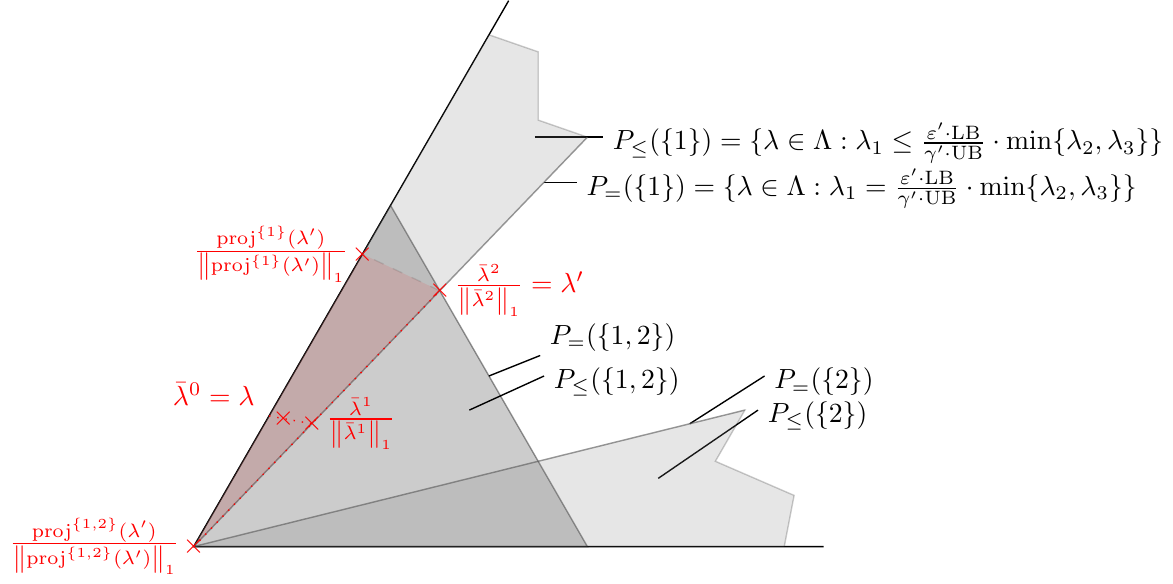}
	\caption{\citep{Helfrich:mp-approximation} Illustration of the sequence of weight vectors constructed in Algorithm~\ref{alg:BoundaryRounding} for a weight vector $\lambda \in P_<(\{1\}) \cup P_<(\{1,2\}$ with $\lambda_1 < \lambda_2 < \lambda_3$. First, the weight vector~$\bar{\lambda}^0 = \lambda$ is rounded to $\bar{\lambda}^1$ by applying~\eqref{eq:BoundaryRounding} with index set~$\{1\}$. Then, the weight vector~$\bar{\lambda}^1$ is rounded to $\bar{\lambda}^2$ by applying~\eqref{eq:BoundaryRounding} with index set~$\{1,2\}$. For the purpose of a concise illustration, all weight vectors are normalized, i.e., their components sum up to one. Note that applying initially a rounding with index set~$\{1,2\}$ and then a rounding with~$\{1\}$ would yield a weight vector~$\tilde{\lambda} \notin P_=(\{1,2\})$. Consequently, Lemma~\ref{lem:ApproximationGuaranteeWithinTreshhold} could not be applied with $I = \{1,2\}$.}
	\label{fig:IllustrationRounding}
\end{figure}
\begin{proposition}\label{prop:CorrectnessBoundaryRounding}
	Let $\beta \geq 1$, $0 < \varepsilon' < 1$, and let~$\lambda \in \Lambda$. 
	Then, Algorithm~\ref{alg:BoundaryRounding} returns a weight vector $\lambda' \in \Compact$ such that, if $\flag = \text{TRUE}$, every $\beta$-approximation for $\lambda'$ is a $(\beta + \varepsilon')$-approximation for $\lambda$. Otherwise, if $\flag = \text{\text{FALSE}}$, it holds that $\lambda' = \lambda$ and, thus, every $\beta$-approximation for $\lambda'$ is a $\beta$-approximation for $\lambda$. Further, Algorithm~\ref{alg:BoundaryRounding} has worst-case running time~$\mathcal{O}(1)$.
\end{proposition}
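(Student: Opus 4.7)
The plan is to verify the three claims (containment $\lambda' \in \Compact$, approximation guarantee, and running time) separately, relying heavily on the previously stated structural lemmas. As a first step, I would fix the permutation $\sigma$ computed in Lines~\ref{alg:BoundaryRounding:Sigma1}--\ref{alg:BoundaryRounding:Sigma2} and, for notational convenience, argue under the assumption that $\lambda$ is already sorted, i.e., $\lambda_1 \leq \lambda_2 \leq \ldots \leq \lambda_d$; the final inverse permutation in Line~\ref{alg:BoundaryRounding:InverseSorting} merely undoes this relabeling and normalization commutes with it, so it does not affect containment in $P_=(I)$ or $\Compact$ (both being symmetric up to reindexing of $I$). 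Together with Observation~\ref{obs:sorting-P_<(I)}, this reduces the analysis to prefix index sets $I = \{1, \ldots, k\}$, for which the test \enquote{$\sum_{j=1}^k \bar\lambda_j < c \cdot \bar\lambda_{k+1}$} in Line~\ref{alg:BoundaryRounding:IfConditionPI} is exactly the condition $\bar\lambda \in P_<(\{1,\ldots,k\})$.

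For the containment claim, I would denote by $\bar\lambda^0 = \lambda, \bar\lambda^1, \ldots, \bar\lambda^L$ the sequence of intermediate weight vectors produced whenever a rounding step is actually triggered, with $k^1 < k^2 < \ldots < k^L$ the corresponding indices, and set $I^\ell = \{1,\ldots,k^\ell\}$. Lemma~\ref{lem:boundaryRounding} directly yields $\bar\lambda^\ell \in P_=(I^\ell)$ and $\bar\lambda^{\ell-1} \in \conv(\{\bar\lambda^\ell, \proj^{I^\ell}(\bar\lambda^\ell)\})$. The main subtlety is that a rounding triggered at $k^\ell$ could in principle destroy previously established equalities $\bar\lambda^{\ell-1} \in P_=(I^{\ell-1})$; this is exactly where the appendix lemma \ref{lem:P_<(I)-rounding-invariant} is invoked to show that rounding according to a prefix $I^\ell$ preserves the sorted order of the components and the non-membership $\bar\lambda \notin P_<(\{1,\ldots,k'\})$ for all $k' < k^\ell$. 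An induction over $\ell$ then gives that after the loop terminates, the current vector is not contained in $P_<(\{1,\ldots,k\})$ for any $1 \leq k \leq d-1$; together with Observation~\ref{obs:sorting-P_<(I)} this exactly means the normalized result $\lambda'$ lies in $\Compact$.

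For the approximation guarantee, I would iterate the convex combination relation backwards: by telescoping $\bar\lambda^{\ell-1} \in \conv(\{\bar\lambda^\ell, \proj^{I^\ell}(\bar\lambda^\ell)\})$ through $\ell = L, L-1, \ldots, 1$ (and observing that projection and convex hull interact monotonically since $\proj^{I^\ell}$ only zeroes components and can be commuted through the later convex combinations), one obtains
\begin{align*}
\lambda \;\in\; \conv\!\left(\{\bar\lambda^L\} \cup \{\proj^{I^\ell}(\bar\lambda^L) : \ell = 1, \ldots, L\}\right).
\end{align*}
Since $\bar\lambda^L \in P_=(I^\ell)$ for every $\ell$, Lemma~\ref{lem:ApproximationGuaranteeWithinTreshhold} implies that any $\beta$-approximation for $\bar\lambda^L$ is a $(\beta + \varepsilon')$-approximation for each pair $\conv(\{\bar\lambda^L, \proj^{I^\ell}(\bar\lambda^L)\})$, and hence for their joint convex hull. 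Applying Lemma~\ref{lem:Approx+Convex:convexity} to this convex hull (after rescaling, which does not affect the approximation property) yields the $(\beta+\varepsilon')$-approximation for $\lambda$. If no rounding is ever triggered, then $\flag = \text{FALSE}$ and $\bar\lambda = \lambda$ throughout the loop, so $\lambda' = \lambda$ trivially satisfies the claim. The main obstacle in this argument is bookkeeping the telescoping of convex combinations while keeping track of which $P_=(I^\ell)$-memberships survive; this is precisely what the appendix lemma is designed to support, so the difficulty is essentially reduced to a careful invariant.

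Finally, the running time is immediate from Assumption~\ref{ass:mo}: since $d$ is constant, sorting $\lambda$ in Line~\ref{alg:BoundaryRounding:Sigma1} costs $\mathcal{O}(d \log d) = \mathcal{O}(1)$, the for-loop in Lines~\ref{alg:BoundaryRounding:BeginForLoop}--\ref{alg:BoundaryRounding:EndForLoop} performs $d-1$ iterations each involving $\mathcal{O}(d)$ arithmetic operations, and the final inverse permutation and normalization again take $\mathcal{O}(d)$ operations. All constants collapse, yielding total worst-case running time $\mathcal{O}(1)$.
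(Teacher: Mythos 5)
Your proposal is correct and follows essentially the same route as the paper's proof: reduce to the sorted/prefix-set case via Observation~\ref{obs:sorting-P_<(I)}, track the intermediate vectors through the loop, invoke the appendix invariant (Lemma in Appendix~A) to preserve sorting, non-membership in $P_<$, and the $P_=$-memberships across later roundings, telescope the convex-combination relations from Lemma~\ref{lem:boundaryRounding}, and finish with Lemma~\ref{lem:ApproximationGuaranteeWithinTreshhold} and Lemma~\ref{lem:Approx+Convex:convexity}; the running-time argument is identical. The only difference is presentational: the paper packages the telescoping you describe as an explicit four-part induction invariant, which also handles the all-zero-prefix edge case that your sketch leaves implicit.
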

Note that the input of Algorithm~\ref{alg:BoundaryRounding} includes the weight vector~$\lambda$. Thus, the running time depends on the encoding length of the instance \emph{and} the encoding length of the weight vector~$\lambda$.
\section{An Approximate Dual Variant of Benson's Outer Approximation Algorithm}\label{sec:Approx+Convex:oaa-algorithm}
We now present the details of our algorithm for computing convex approximation sets.
As already outlined in Section~\ref{sec:Foundations}, our algorithm essentially combines the idea of the dual variant of Benson's Outer Approximation Algorithm with the grid approach presented in~\cite{Helfrich:mp-approximation}. Given~$0 < \varepsilon < 1$, it uses an $\alpha$-approximation algorithm~$\ALG_{\alpha}$ for the weighted sum scalarization to construct a $((1 + \varepsilon) \cdot \alpha)$-convex approximation set while relying on a polynomial (in the instance size and $\frac{1}{\varepsilon}$) number of calls to~$\ALG_\alpha$. Again, if a polynomial-time exact algorithm or an (F)PTAS for the weighted sum scalarization is available, our algorithm yields an M(F)PTcAS. In contrast to the algorithms of~\cite{Diakonikolas:thesis} and \cite{Helfrich:mp-approximation}, the choice of weight vectors for which~$\ALG_{\alpha}$ is called is done adaptively and is based on information provided by already computed solutions. This causes the asymptotic worst-case running time to be significantly worse (but still polynomial). However, as shown in the computational study provided in Section~\ref{sec:CompStudy}, our algorithm outperforms the existing ones in practical running time, and the cardinality of the returned convex approximation set is significantly smaller.

\medskip

The first result guarantees termination similar to Proposition~\ref{prop:termination-DualBenson} in the exact case. It states that, for $\beta \geq 1$, it is sufficient to investigate exclusively extreme points of~$D(S)$ in order to determine whether $S$ is a $\beta$-convex approximation set. 
\begin{proposition}\label{prop:SuffCondForApproxPoly}
	Let $\beta \geq 1$ and $S \subseteq X$ be a finite set of feasible solutions. If every extreme point $(\lambda_1,\ldots, \lambda_d, z)$ of $D(S)$ satisfies $z \leq \beta \cdot \lambda^\top f(x)$ for all $x \in X$ in the case of minimization, and $z \geq \frac{1}{\beta} \cdot \lambda^\top f(x)$ for all $x \in X$ in the case of maximization, then~$S$ is a $\beta$-convex approximation set. 
\end{proposition}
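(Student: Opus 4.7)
The plan is to invoke Proposition~\ref{prop:convex-approx=mult-para-approx}, which reduces the claim to showing that for every $\lambda \in \Lambda$ there exists $x^\lambda \in S$ with $\lambda^\top f(x^\lambda) \leq \beta \cdot \lambda^\top f(x)$ for all $x \in X$ (minimization case; the maximization case is symmetric). Since approximation for $\lambda$ is invariant under positive rescaling, restricting to normalized $\lambda \in \Lambda$ loses no generality. Given such a $\lambda$, I would define $z^*(\lambda) \coloneqq \min_{x \in S} \lambda^\top f(x)$; this minimum is attained because $S$ is finite, and $z^*(\lambda)$ is exactly the maximal $z$ with $(\lambda, z) \in D(S)$. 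The goal becomes to prove $z^*(\lambda) \leq \beta \cdot \lambda^\top f(x)$ for all $x \in X$.

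The crucial step is to express the boundary point $(\lambda, z^*(\lambda))$ as a convex combination of extreme points of $D(S)$. Since $\Lambda$ is a bounded simplex and $D(S)$ is cut out of the prism $\Lambda \times \R$ only by half-spaces of the form $\lambda^\top f(x) \geq z$, its recession cone equals $\{(0, -t) : t \geq 0\}$. By the Minkowski--Weyl theorem, one can write
\begin{equation*}
(\lambda, z^*(\lambda)) \;=\; \sum_{k=1}^K \theta_k \, (\lambda^k, z^k) \,+\, (0, -r)
\end{equation*}
for extreme points $(\lambda^k, z^k)$ of $D(S)$, nonnegative coefficients $\theta_k$ summing to one, and some $r \geq 0$. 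Because $z^*$ is a pointwise minimum of linear functions of $\lambda$, it is concave; hence $z^*(\lambda) \geq \sum_k \theta_k \, z^*(\lambda^k) \geq \sum_k \theta_k \, z^k$, where the second inequality uses $(\lambda^k, z^k) \in D(S)$. Combined with $z^*(\lambda) = \sum_k \theta_k z^k - r$, this forces $r = 0$, yielding the clean decomposition $\lambda = \sum_k \theta_k \lambda^k$ and $z^*(\lambda) = \sum_k \theta_k z^k$.

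It then remains to apply the hypothesis. For each extreme point, $z^k \leq \beta \cdot \lambda^{k\top} f(x)$ for every $x \in X$. Taking the convex combination with weights $\theta_k$ gives, for every $x \in X$,
\begin{equation*}
z^*(\lambda) \;=\; \sum_{k=1}^K \theta_k \, z^k \;\leq\; \beta \cdot \left( \sum_{k=1}^K \theta_k \, \lambda^k \right)^{\!\!\top} f(x) \;=\; \beta \cdot \lambda^\top f(x),
\end{equation*}
so any minimizer $x^\lambda$ of $\lambda^\top f$ over $S$ is a $\beta$-approximation for $\lambda$, which by Proposition~\ref{prop:convex-approx=mult-para-approx} finishes the minimization case. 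The maximization case runs analogously with $D(S)$ unbounded above, the convex function $z_*(\lambda) \coloneqq \max_{x \in S} \lambda^\top f(x)$, and the inequalities reversed.

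The main technical subtlety I anticipate is the decomposition step: $D(S)$ is an unbounded polyhedron, so Minkowski--Weyl in principle admits contributions from extreme rays, and one must rule these out cleanly. The key observation is that the recession cone points only in the direction of decreasing $z$ while $(\lambda, z^*(\lambda))$ lies on the \emph{upper} envelope of $D(S)$; once the concavity of $z^*$ is exploited, the extreme-ray coefficient $r$ is forced to vanish and the argument collapses to a pair of matching convex combinations in $\Lambda$ and in the scalar $z$-coordinate.
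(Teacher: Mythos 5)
Your argument is correct and follows essentially the same route as the paper's proof: write the point $(\lambda,\min_{x\in S}\lambda^\top f(x))$ of $D(S)$ as a convex combination of extreme points plus a recession term in the $-z$ direction, apply the extreme-point hypothesis, and conclude via the weighted-sum characterization of Proposition~\ref{prop:convex-approx=mult-para-approx}. The only (harmless) difference is that you prove the recession coefficient vanishes via concavity of $z^*$, whereas the paper simply discards it using $-t\leq 0$, which already points in the right direction for the inequality.
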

\begin{proof}
	Let~$(\lambda^*_1,\ldots,\lambda^*_d,z^*)$ be an extreme point of $D(S)$. Then, by construction of~$D(S)$, the point~$(\lambda^*_1,\ldots,\lambda^*_d,z^*)$ satisfies at least $d + 1$ of the following equations:
	\begin{align*}
		 \lambda_i &= 0,\; i =1,\ldots,d, \\
		 \sum_{i = 1}^d \lambda_i &= 1,\\
		 (\lambda_1,\ldots, \lambda_d)^\top f(\hat{x}) &= z,\; \hat{x} \in S.
	\end{align*}
	Since no weight vector~$\lambda \in \Lambda$ can satisfy $\lambda_i = 0$ for all $i \in\{1,\ldots d\}$ and $\sum_{i = 1}^d \lambda_i = 1$ simultaneously, there exists a solution~$x^* \in S$ such that $(\lambda^*_1,\ldots, \lambda^*_d)^\top f(x^*) = z^*$. By assumption, this means that $x^*$ is a $\beta$-approximation for $(\lambda^*_1,\ldots, \lambda^*_d)$. 
	
	\medskip
	
	We prove the claim for the minimization case. The maximization case can be handled similarly. Let~$\lambda \in \Lambda$ and set $x' \coloneqq \arg \min_{\hat{x}\in S} \lambda^\top f(\hat{x})$. Then, $(\lambda_1,\ldots,\lambda_d,\lambda^\top f(x')) \in D(S)$  and there exist extreme points~$(\lambda^1_1,\ldots,\lambda^1_d,z^1)$, $\ldots,(\lambda^L_1,\ldots,\lambda^L_d,z^L)$ of $D(S)$, scalars~$\theta_1,\ldots \theta_L \geq 0$ with $\sum_{\ell = 1}^L \theta_\ell = 1$, and a scalar~$t \geq 0$ such that
	\begin{align*}
		(\lambda,\lambda^\top f(x')) = \sum_{\ell = 1}^L \theta_\ell (\lambda^\ell_1,\ldots,\lambda^\ell_d,z^\ell) - t \cdot e^{d+1},
	\end{align*}
	where~$e^{d+1}$ denotes the $(d+1)$th unit vector in $\R^{d+1}$. Thus, as shown above, there exist solutions~$x^1,\ldots x^L \in S$ such that $z^\ell = (\lambda^\ell)^\top f(x^\ell)$ for $\ell = 1,\ldots L$. This implies that, for every~$x \in X$,
	\begin{align*}
	\lambda^\top f(x') &= \sum_{\ell = 1}^L \theta_\ell \cdot (\lambda^\ell)^\top f(x^\ell) - t \leq \sum_{\ell = 1}^L \theta_\ell \cdot (\lambda^\ell)^\top f(x^\ell) \\
	&\leq \sum_{\ell = 1}^L \theta_\ell \cdot \beta \cdot (\lambda^\ell)^\top f(x) = \beta \cdot \left(\sum_{\ell = 1}^L \theta_\ell \lambda^\ell\right)^\top f(x) = \beta \cdot \lambda^\top f(x).
	\end{align*}
\end{proof}
Our algorithm now works as follows: For a given instance~$\I$ of a $d$-objective minimization/maximization problem, a scalar $0 < \varepsilon < 1$, and an $\alpha$-approximation algorithm~$\ALG_\alpha$ for the weighted sum scalarization, it first computes $\LB$ and $\UB$ as well as $\varepsilon' = \sqrt{1 + \varepsilon} - 1$ and $\beta = (1 + \varepsilon') \cdot \alpha$. In the next step, it calls $\ALG_\alpha$ for the weight vector~$\lambda = (\frac{1}{d}, \ldots, \frac{1}{d}) \in \Grid$. Based on the returned solution~$x$, it initializes~$S = \{x\}$, constructs the polyhedron~$D \coloneqq D(S)$, and initializes a set of already investigated weight vectors~$R = \{\lambda\}$. The algorithm maintains a list~$M$ of extreme points of~$D$ that must be investigated, which is initialized as all extreme points of~$D$. Then, in each iteration, an extreme point~$(\lambda^*_1,\ldots, \lambda^*_d,z^*)$ in $M$ is picked and boundary rounding with~$\varepsilon'$ and~$\beta$ is applied to $\lambda^* \coloneqq (\lambda^*_1, \ldots, \lambda^*_d)$. Depending on whether the resulting weight vector~$\lambda' \in  \Compact$ is indeed a rounded one or not, the grid rounding procedure with~$\varepsilon'$ or~$\varepsilon$, respectively, is applied to obtain a weight vector~$\lambda'' \in \Grid$. If~$\lambda''$ is contained in~$R$, $\ALG_\alpha$ has already been applied for~$\lambda''$ and an $\alpha$-approximation~$x''$ for~$\lambda''$ has already been found. As outlined in Section~\ref{sec:Foundations}, it is guaranteed that the solution~$x''$ is a $((1+ \varepsilon) \cdot \alpha)$-approximation for~$\lambda^*$ and, therefore, $z^* \leq (1 + \varepsilon) \cdot \alpha \cdot (\lambda^*)^\top f(x')$ for all $x' \in X$ in the case of minimization, and $z^* \geq \frac{1}{(1 + \varepsilon) \cdot \alpha} \cdot (\lambda^*)^\top f(x')$ for all $x' \in X$ in the case of maximization. Consequently, the extreme point $(\lambda^*_1,\ldots,\lambda^*_d,z^*)$ can be skipped. If~$\lambda''$ is not contained in~$R$, algorithm~$\ALG_\alpha$ is called for~$\lambda''$, the weight vector~$\lambda''$ is added to $R$, $S$ is updated to $S \cup \{x''\}$, and~$D$ is updated to $D \cap H(x'')$ with the obtained solution~$x''$. Then, the list~$M$ is reset to the list of all extreme points of the updated polyhedron~$D$ and the algorithm proceeds with the next iteration. This is repeated until
all rounded weight vectors~$\lambda''$ of all extreme points $(\lambda^*_1,\ldots, \lambda^*_d,z^*)$ of~$D$ are contained in~$R$. By Proposition~\ref{prop:SuffCondForApproxPoly}, this confirms that the set~$S$ containing all computed solutions is a $((1+\varepsilon) \cdot \alpha)$-convex approximation set.

Note that all extreme points of the updated polyhedron~$D$ must indeed be investigated since, due to the approximate nature of the solutions, it cannot be guaranteed that already investigated extreme points are also extreme points of the updated polyhedron. Moreover, Lemma~\ref{lem:Approx+Convex:Cardinality-grid} states a polynomial upper bound of the maximum number of obtained solutions. This guarantees that the vertex enumeration and checking whether a weight vector is contained in~$R$ runs in polynomial time. Algorithm~\ref{alg:FPTOAA} summarizes this procedure. In the remainder of this section, we formally prove its correctness and analyze its asymptotic worst-case running time.

 \begin{algorithm}[tb]
    \small
\begin{multicols}{2}    
 \begin{algorithmic}[1]
	\Require{An instance~$\I$ of a $d$-objective minimization/maximization problem, $0 < \varepsilon < 1$, an $\alpha$-approximation algorithm $\ALG_{\alpha}$ for the weighted sum scalarization.}
	
	\Ensure{A $((1 + \varepsilon) \cdot \alpha)$-convex approximation set for $\I$.}
	
	\State Compute $\LB$ and $\UB$;
	
	\State $\varepsilon' \leftarrow \sqrt{1 + \varepsilon} -1$
	
	\State $\beta \leftarrow (1 + \varepsilon') \cdot \alpha$;
	
	\State $\lambda \leftarrow (\frac{1}{d}, \ldots, \frac{1}{d})$;
	
	\State $x \leftarrow \ALG_{\alpha}(\lambda)$;
	
	\State $S \leftarrow \{x\}$;
	
	\State $D \leftarrow D(S)$;
	
	\State $M \leftarrow $ extreme points of $D$\label{alg:FPTOAA:vertexenumeration};
	
	\State $R \leftarrow \{\lambda\}$;
	
	\While{$M \neq \emptyset$}{
		\State Pick $(\lambda^*_1,\ldots, \lambda^*_d, z^*) \in M$;
  
        \State $M \leftarrow M \setminus \{(\lambda^*_1,\ldots, \lambda^*_d, z^*)\}$;
		
		\State $\lambda^* = (\lambda^*_1,\ldots, \lambda^*_d)$;
		
		\State $\lambda^*, \flag \leftarrow \texttt{BoundaryRounding}$\\  \hspace{2.4cm}$(\lambda^*,\varepsilon',\beta, \LB,\UB)$;
		
		\If{$\flag$ is $\text{TRUE}$}\State  $\lambda^* \leftarrow \texttt{GridRounding}(\lambda^*,\varepsilon')$;
		
		\Else
        \State  $\lambda^* \leftarrow \texttt{GridRounding}(\lambda^*,\varepsilon)$
        \EndIf
		\If{$\lambda^* \notin R$\label{alg:FPTOAA:conditionInL}}
			\State $x^* \leftarrow \ALG_{\alpha}(\lambda^*)$; 
   
            \State $S \leftarrow S \cup \{x^*\}$;
			
			\State $R \leftarrow R \cup \{\lambda^*\}$;\label{alg:FPTOAA:insertingInL}
			
			\State $D \leftarrow D(S)$;
			
			\State $M \leftarrow $ extreme points of $D$;\label{alg:FPTOAA:vertex-enum2}
    \EndIf
	}
    \EndWhile
	\State  \Return $S$;
 \end{algorithmic}
 \end{multicols}
	\caption{Approximate variant of the dual variant of Benson's Outer Approximation Algorithm.}
	\label{alg:FPTOAA}
\end{algorithm}

Since $\ALG_\alpha$ is applied for grid weight vectors $\lambda' \in \Grid$ only, Lemma~\ref{lem:Approx+Convex:Cardinality-grid}  implies that $\vert S \rvert \leq \lvert \Grid \rvert$ holds in every iteration of Algorithm~\ref{alg:FPTOAA}. Hence, the polyhedron~$D$ is the intersection of at most $\lvert \Grid \rvert + d + 2$ half-spaces (the half-spaces~$H(x)$ for all solutions $x \in S$, $\sum_{i=1}^d \lambda_i = 1$, and $\lambda_i \geq 0$ for $i = 1, \dots, d$). The consequences of this observation are twofold: First, we can apply Seidel's asymptotic upper bound theorem~\cite{Seidel1995upperbound}, which states that the number of faces of a $d$-dimensional polytope~$P$ with $n$ facets is asymptotically bounded by~$\mathcal{O}\left( n^{\lfloor \frac{d}{2} \rfloor} \right)$:
\begin{proposition}\label{cor:BoundOnVertices}
	In each iteration, the number of vertices of $D$ is in $\mathcal{O}\left(\lvert \Grid \rvert^{\lfloor \frac{d}{2}
 \rfloor}\right)$.
\end{proposition}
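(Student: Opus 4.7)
The plan is a direct application of Seidel's asymptotic upper bound theorem, which has just been quoted in the text. First I would identify the intrinsic dimension of $D$: although $D$ is defined inside $\R^{d+1}$ with coordinates $(\lambda_1, \ldots, \lambda_d, z)$, the equality $\sum_{i=1}^d \lambda_i = 1$ forces $D$ to lie in a $d$-dimensional affine subspace. The two half-spaces implementing this equality therefore do not contribute facets to $D$; they only determine its affine hull. Hence the facets of $D$ can only stem from the half-spaces $H(x)$ for $x \in S$ and from the $d$ nonnegativity constraints $\lambda_i \geq 0$, giving at most $|S| + d$ facets.

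Next, I would bound $|S|$. The set $S$ is only enlarged in the block guarded by the condition $\lambda^* \notin R$ in line~\ref{alg:FPTOAA:conditionInL} of Algorithm~\ref{alg:FPTOAA}. By construction of \texttt{GridRounding}, every such $\lambda^*$ lies in $\Grid$, and $R$ receives one new entry per addition to $S$; thus $|S| \leq |R| \leq |\Grid|$ throughout the algorithm. Consequently, $D$ has at most $|\Grid| + d$ facets in every iteration.

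To invoke Seidel's theorem, which is stated for polytopes, I would dispose of the unboundedness of $D$ in the $z$-direction by intersecting $D$ with one additional half-space: $z \geq 0$ in the minimization case (justified because $f \geq 0$ implies $\lambda^\top f(x) \geq 0$ at every vertex) or $z \leq \UB$ in the maximization case. The resulting set $D'$ is a $d$-dimensional polytope with at most $|\Grid| + d + 1$ facets whose vertex set contains all vertices of $D$. Seidel's theorem then yields that the number of vertices of $D$ is bounded by $\mathcal{O}((|\Grid| + d + 1)^{\lfloor d/2 \rfloor})$, which equals $\mathcal{O}(|\Grid|^{\lfloor d/2 \rfloor})$ because $d$ is constant by Assumption~\ref{ass:mo}.

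The main obstacle, such as it is, is bookkeeping: correctly identifying that the intrinsic dimension of $D$ equals $d$ rather than $d+1$, and handling the unboundedness cleanly before invoking Seidel. The combinatorial content is entirely contained in the cited upper bound theorem.
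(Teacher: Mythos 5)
Your proposal is correct and follows essentially the same route as the paper: bound $\lvert S\rvert$ by $\lvert\Grid\rvert$ since $\ALG_\alpha$ is only ever called on grid weight vectors, count the defining half-spaces of $D$, and invoke Seidel's upper bound theorem with the constant $d$ absorbed into the $\mathcal{O}$-notation. Your additional care in truncating $D$ to a bounded polytope before applying Seidel (and in noting that the equality $\sum_i \lambda_i = 1$ contributes to the affine hull rather than the facet count) is a detail the paper passes over silently, but it does not change the argument.
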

Second, we obtain a worst-case asymptotic running time of the vertex enumeration when using the (asymptotically optimal) vertex enumeration algorithm by \cite{Chazelle1993:convexhull} that runs, for a polyhedron of dimension~$d$ given as intersection of $n$~half-spaces, in $\mathcal{O}(n \log(n) + n^{\lfloor  \frac{d}{2} \rfloor})$. Since~$\dim(\Lambda) = d-1$ and, therefore, the  dimension of~$D$ is at most $d$, this implies:
\begin{corollary}\label{cor:RunningTimeVertexEnumeration}
	In each iteration, the running time of the vertex enumeration (Step~\ref{alg:FPTOAA:vertexenumeration} and \ref{alg:FPTOAA:vertex-enum2}) is in $$\mathcal{O}\left( \lvert\Grid \rvert \cdot \log( \lvert \Grid \rvert) + \lvert\Grid \rvert^{\lfloor \frac{d}{2} \rfloor} \right).$$
\end{corollary}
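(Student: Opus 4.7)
The plan is to combine three observations already in hand: (i) the polyhedron~$D$ is defined as the intersection of $H(x)$ for $x\in S$ together with the $d+1$ half-spaces describing the weight set (namely $\lambda_i\geq 0$ for $i=1,\ldots,d$ and $\sum_{i=1}^d \lambda_i = 1$, the latter encoded as two inequalities); (ii) $|S|\leq |\Grid|$ throughout the execution of Algorithm~\ref{alg:FPTOAA}, as argued right before Proposition~\ref{cor:BoundOnVertices}; and (iii) the dimension of~$D$ is at most $d$, since $D\subseteq \Lambda\times\R$ with $\dim(\Lambda)=d-1$.

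From (i) and (ii), the number~$n$ of half-spaces whose intersection defines $D$ satisfies $n\leq |\Grid| + d + 2 \in \mathcal{O}(|\Grid|)$, where the constant absorbs $d+2$ thanks to Assumption~\ref{ass:mo}. By (iii), $D$ has dimension at most $d$, so Chazelle's vertex enumeration algorithm~\citep{Chazelle1993:convexhull}, when applied to the facet description of~$D$, terminates in time
\[
\mathcal{O}\bigl( n\log n + n^{\lfloor d/2\rfloor}\bigr).
\]
Substituting $n\in\mathcal{O}(|\Grid|)$ yields exactly the stated bound $\mathcal{O}\bigl(|\Grid|\cdot\log|\Grid| + |\Grid|^{\lfloor d/2\rfloor}\bigr)$, which covers both Step~\ref{alg:FPTOAA:vertexenumeration} and Step~\ref{alg:FPTOAA:vertex-enum2} since the same bound on~$|S|$ applies in every iteration.

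There is no real obstacle here, as the argument is essentially a plug-in of Chazelle's bound; the only minor care needed is to make sure that $|\Grid|$ dominates the additive $d+2$ (trivial, since $d$ is constant and $|\Grid|\geq 1$) and that Chazelle's bound, stated for polytopes, applies to~$D$ even though $D$ is a (possibly unbounded) polyhedron in $\Lambda\times\R$ -- this is standard because $D$ lives in the strip $\Lambda\times\R$ of dimension~$d$ and its facet description has $\mathcal{O}(|\Grid|)$ facets, so the complexity statement carries over unchanged.
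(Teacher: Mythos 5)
Your proposal is correct and follows exactly the same route as the paper: bound the number of defining half-spaces of~$D$ by $\lvert S\rvert + d + 2 \leq \lvert\Grid\rvert + d + 2 \in \mathcal{O}(\lvert\Grid\rvert)$ using Lemma~\ref{lem:Approx+Convex:Cardinality-grid}, note that $\dim(D)\leq d$ since $D\subseteq\Lambda\times\R$ with $\dim(\Lambda)=d-1$, and plug into Chazelle's $\mathcal{O}(n\log n + n^{\lfloor d/2\rfloor})$ bound. Your added remark on transferring the bound from polytopes to the polyhedron~$D$ is a harmless refinement of a point the paper glosses over.
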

Note that the running time of both rounding schemes depends on the encoding length of the given weight vector~$\lambda^*$ as well, see Proposition~\ref{prop:Approx+Convex:GridRoundingCorrect+RunTime} and Proposition~\ref{prop:CorrectnessBoundaryRounding}. Hence, to prove polynomial running time of Algorithm~\ref{alg:FPTOAA}, we have to show that the encoding length of every extreme point obtained by the vertex enumeration algorithm is polynomially bounded in the instance size and the reciprocal of $\varepsilon$. Again an existing result, this time by~\cite{Groetschel2012geometric}, helps.
\begin{theorem}[\cite{Groetschel2012geometric}]\label{th:Approx+Convex:EncodingLengthOfLinearSystem}
	If a rational linear system $A x = b, x \in \R^{d}$ with $A \in \Q^{m \times d}$ and $b\in \Q^m$ has a unique solution $x^*$, then $x^* \in \Q^d$ and the encoding length of each component of~$x^*$ is polynomially bounded by the encoding length of~$A$ and~$b$.
\end{theorem}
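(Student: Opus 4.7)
The plan is to prove this via Cramer's rule together with Hadamard's inequality, since this is the classical route to encoding-length bounds for solutions of rational linear systems. First I would argue that uniqueness of $x^*$ forces $\textup{rank}(A) = d$, so there exist $d$ linearly independent rows of $A$. Let $A' \in \Q^{d \times d}$ be the corresponding nonsingular submatrix and $b' \in \Q^d$ the matching subvector of $b$; the remaining rows of the system are implied by these, and $x^* = (A')^{-1} b'$. In particular, rationality of $A'$ and $b'$ immediately gives $x^* \in \Q^d$.

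Next, by Cramer's rule, the $i$-th component of $x^*$ satisfies
\begin{equation*}
    x^*_i \;=\; \frac{\det(A'_i)}{\det(A')},
\end{equation*}
where $A'_i$ is obtained from $A'$ by replacing its $i$-th column with $b'$. Clearing denominators across entries so as to work with integers, I would then invoke Hadamard's inequality to bound $|\det(A')|$ and $|\det(A'_i)|$ by a product of Euclidean norms of the rows, each of which is bounded polynomially in the encoding lengths of the entries of $A$ and $b$. Taking logarithms, this yields that the numerator and denominator of $x^*_i$ (expressed as a ratio of integers) have bit-lengths polynomial in $\textup{enc}(A) + \textup{enc}(b)$.

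I expect two technical points to require care. First, one has to ensure that the polynomial bound on $\textup{enc}(x^*_i)$ does not depend on $m$ through the choice of $d$ rows: this is handled by the observation that any nonsingular $d \times d$ submatrix works, and each such submatrix has encoding length at most $\textup{enc}(A)$. Second, one should verify that the ratio $\det(A'_i) / \det(A')$ in lowest terms still has polynomially bounded encoding length; this follows because the bit-length of a reduced fraction $p/q$ is at most the bit-length of the unreduced fraction. Combining these steps gives the claimed polynomial bound and completes the proof. The main obstacle here is mostly bookkeeping of encoding lengths rather than a deep mathematical hurdle, since the result is a well-known consequence of Cramer's rule and Hadamard's inequality as presented in \cite{Groetschel2012geometric}.
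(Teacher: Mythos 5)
Your argument is correct and is exactly the standard route: the paper itself states this result without proof, citing Gr\"otschel, Lov\'asz, and Schrijver, whose own derivation proceeds via the same reduction to a nonsingular $d \times d$ subsystem, Cramer's rule, and Hadamard's inequality. The two points you flag (independence of the bound from the particular row selection, and the behaviour of the fraction in lowest terms) are handled exactly as you describe, so there is nothing missing.
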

This implies that the encoding length of each extreme point of~$D$ is bounded by a polynomial in the instance size and $\frac{1}{\varepsilon}$:
\begin{proposition}[\cite{Boekler:output-sensitive}]\label{prop:Approx+Convex:EncodingLengthExtremeVertices}
	Let~$S \subseteq X$. Then, the encoding length of each extreme point $(\lambda^*_1,\ldots, \lambda^*_d,z)$ of $D(S)$ is bounded polynomially in the instance size.
\end{proposition}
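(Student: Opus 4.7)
The plan is to exhibit each extreme point of $D(S)$ as the unique solution of a linear system whose defining data has encoding length polynomially bounded in the instance size, and then invoke Theorem~\ref{th:Approx+Convex:EncodingLengthOfLinearSystem}.

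First, I would write out explicitly the half-spaces/hyperplanes whose intersection defines $D(S)$ inside $\Lambda \times \R$: the equation $\sum_{i=1}^d \lambda_i = 1$, the nonnegativity inequalities $\lambda_i \geq 0$ for $i = 1,\ldots,d$, and one inequality $(\lambda_1,\ldots,\lambda_d)^\top f(\hat{x}) \gtreqqless z$ (depending on whether we are in the minimization or maximization case) for every $\hat{x} \in S$. Since $D(S)$ lives in the $(d+1)$-dimensional ambient space and the equation $\sum_i \lambda_i = 1$ is always active, an extreme point $(\lambda^*_1, \ldots, \lambda^*_d, z^*)$ is the unique point at which this equation together with $d$ further linearly independent constraints (chosen from the $\lambda_i \geq 0$ and the $(\lambda_1,\ldots,\lambda_d)^\top f(\hat{x}) \gtreqqless z$) are simultaneously active with equality.

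Next, I would collect these $d+1$ active equalities into a rational linear system $A \cdot (\lambda_1, \ldots, \lambda_d, z)^\top = b$ whose unique solution is the extreme point under consideration. The entries of $A$ and $b$ are either $0$, $1$, or values $f_j(\hat{x})$ for some $\hat{x} \in S$ and some $j \in \{1,\ldots,d\}$. By Assumption~\ref{ass:mo}, the encoding length of each such $f_j(\hat{x})$ is polynomially bounded in the instance size, and hence the same holds for the encoding length of $A$ and $b$ as a whole.

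Finally, applying Theorem~\ref{th:Approx+Convex:EncodingLengthOfLinearSystem} to this system yields that every component of the unique solution $(\lambda^*_1, \ldots, \lambda^*_d, z^*)$ has encoding length polynomially bounded in the encoding length of $A$ and $b$, and therefore polynomially bounded in the instance size, which is the claim. The main subtlety — though a rather mild one — is ensuring that there is always a selection of $d$ additional active constraints forming, together with $\sum_i \lambda_i = 1$, a linearly independent system; this is exactly the standard characterization of extreme points of a polyhedron and so requires no additional work beyond citing it.
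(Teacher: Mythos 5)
Your proposal is correct and follows essentially the same route as the paper's proof: both identify an extreme point of $D(S)$ as the unique solution of a rational linear system formed from active constraints (the normalization $\sum_i \lambda_i = 1$, nonnegativity constraints, and constraints $(\lambda_1,\ldots,\lambda_d)^\top f(\hat{x}) = z$ for $\hat{x}\in S$), observe that the system's data consists of $0$, $1$, and objective values $f_j(\hat{x})$ whose encoding lengths are polynomially bounded by Assumption~\ref{ass:mo}, and then invoke Theorem~\ref{th:Approx+Convex:EncodingLengthOfLinearSystem}. If anything, your accounting of the $d+1$ active, linearly independent constraints in the $(d+1)$-dimensional ambient space is slightly more explicit than the paper's statement.
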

\begin{proof}
	Let $(\lambda^*_1,\ldots, \lambda^*_d,z)$ be an extreme point of $D(S)$. Then, there exists at most $d$ inequalities of the form $\lambda_i \geq  0$, $i = 1,\ldots d$, or $\sum_{i=1}^d \lambda_i f_i(x) - z \geq 0\ (\leq 0)$ in the case of minimization (maximization) induced by solutions $x^1, \ldots, x^{d} \in S$ such that $(\lambda^*_1,\ldots, \lambda^*_d,z)$ is the unique solution of the linear system
	\begin{align*}
	\begin{pmatrix}
	f_1(x^1), && \ldots&& f_d(x^1),&&-1\\
	f_1(x^2), && \ldots&& f_d(x^2),&&-1\\
	&& \vdots&& &&\\
	f_1(x^d), && \ldots&& f_d(x^d),&&-1\\
	\end{pmatrix}
	\cdot \begin{pmatrix}\lambda \\ z\end{pmatrix} \geq 0\ (\leq 0) \text{ and } \lambda &\in \Lambda.
	\end{align*}
	By assumption, the components of the images~$f(x^i)$, $i = 1,\ldots, d$, are polynomial-time computable in the instance size. Hence, the encoding length of the extreme point~$(\lambda^*_1,\ldots, \lambda^*_d,z^*)$ is polynomially bounded in the instance size by Theorem~\ref{th:Approx+Convex:EncodingLengthOfLinearSystem}. 
\end{proof} \noindent
We are now ready to prove termination and correctness of our algorithm.
\begin{theorem}\label{th:CorrectnessAndRunningTime}
	Algorithm~\ref{alg:FPTOAA} terminates and returns a $((1 + \varepsilon) \cdot \alpha)$-convex approximation set~$S$ in time
	\begin{align*}
	\mathcal{O} \left(T_{\LB/\UB}  +  \lvert \Grid \rvert ^{\lfloor \frac{d}{2} \rfloor +2}  + \lvert \Grid \rvert  \cdot T_{\ALG_{\alpha}} \right),
	\end{align*}
	where~$T_{\LB / \UB}$ denotes the time needed for computing the bounds~$\LB$ and~$\UB$, and~$T_{\ALG_{\alpha}}$ denotes the running time of~$\ALG_{\alpha}$.
\end{theorem}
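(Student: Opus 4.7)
The plan is to establish termination, correctness, and the running-time bound separately, exploiting that every call to $\ALG_{\alpha}$ is made on a weight vector in the finite grid $\Grid$.

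\textbf{Termination.} I would first observe that after the rounding schemes $\texttt{BoundaryRounding}$ and $\texttt{GridRounding}$ are applied, the weight vector $\lambda^*$ passed (if not already in $R$) to $\ALG_{\alpha}$ belongs to $\Grid$. Thus $R \subseteq \Grid$ at all times, and each non-skip iteration (one that actually calls $\ALG_{\alpha}$) strictly enlarges $R$. Hence at most $\lvert \Grid \rvert$ non-skip iterations occur over the entire run. Between two such iterations, $M$ is frozen and strictly shrinks in each skip iteration, so termination follows.

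\textbf{Correctness.} I would apply Proposition~\ref{prop:SuffCondForApproxPoly} with $\beta=(1+\varepsilon)\cdot \alpha$. Upon termination $M=\emptyset$, so every extreme point $(\lambda^*_1,\ldots,\lambda^*_d,z^*)$ of the final polyhedron $D=D(S)$ has been processed and \emph{skipped}: its rounded weight vector $\lambda''$ already lay in $R$, meaning that some $x''\in S$ satisfies $(\lambda'')^\top f(x'') \leq \alpha\cdot (\lambda'')^\top f(x)$ for all $x\in X$ (or the corresponding maximization inequality). The two-step chain of Section~\ref{sec:Foundations} — $\texttt{BoundaryRounding}$ together with the Projection Property~\ref{property:Approx+Convex:projection}, followed by $\texttt{GridRounding}$ together with the Continuity Property~\ref{property:Approx+Convex:Continuity}, tuned by $\varepsilon' = \sqrt{1+\varepsilon}-1$ and $\beta=(1+\varepsilon')\cdot \alpha$ — then upgrades $x''$ to a $((1+\varepsilon)\cdot \alpha)$-approximation for $\lambda^*$. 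Because $(\lambda^*,z^*)\in D(S)$ and $x''\in S$, one has $z^* \leq (\lambda^*)^\top f(x'')$ (respectively $z^*\geq (\lambda^*)^\top f(x'')$ in the maximization case), and chaining with the approximation guarantee yields exactly the hypothesis of Proposition~\ref{prop:SuffCondForApproxPoly} for $\beta=(1+\varepsilon)\cdot\alpha$. Hence $S$ is a $((1+\varepsilon)\cdot \alpha)$-convex approximation set.

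\textbf{Running time.} I would count iterations by distinguishing skip from non-skip iterations. There are at most $\lvert \Grid\rvert$ non-skip iterations, and each costs $T_{\ALG_{\alpha}}$ plus one vertex enumeration on a polytope defined by $\mathcal{O}(\lvert \Grid\rvert)$ half-spaces; by Corollary~\ref{cor:RunningTimeVertexEnumeration} this is $\mathcal{O}(\lvert \Grid\rvert\log\lvert \Grid\rvert + \lvert \Grid\rvert^{\lfloor d/2\rfloor})$, giving total non-skip cost $\mathcal{O}(\lvert \Grid\rvert \cdot T_{\ALG_{\alpha}} + \lvert \Grid\rvert^{\lfloor d/2\rfloor+1})$. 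Between consecutive non-skip iterations $M$ is reset to the vertex set of the current $D$, which by Proposition~\ref{cor:BoundOnVertices} has cardinality $\mathcal{O}(\lvert \Grid\rvert^{\lfloor d/2\rfloor})$; summed over the at most $\lvert \Grid\rvert+1$ epochs, this bounds the number of skip iterations by $\mathcal{O}(\lvert \Grid\rvert^{\lfloor d/2\rfloor+1})$. Each skip iteration performs both rounding schemes (constant-encoded inputs by Proposition~\ref{prop:Approx+Convex:EncodingLengthExtremeVertices}, so running in time polynomially bounded in $\log\lvert \Grid\rvert$ by Propositions~\ref{prop:Approx+Convex:GridRoundingCorrect+RunTime} and~\ref{prop:CorrectnessBoundaryRounding}) plus a membership test against $R$, costing $\mathcal{O}(\lvert \Grid\rvert)$; total skip-iteration cost is therefore $\mathcal{O}(\lvert \Grid\rvert^{\lfloor d/2\rfloor+2})$. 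Adding the initial $T_{\LB/\UB}$ for computing the bounds yields the claimed running time.

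\textbf{Main obstacle.} The delicate point is the correctness argument: one must make sure that the two-stage rounding actually produces a weight vector $\lambda''\in\Grid$ for which the associated $\alpha$-approximate $x''\in R$ upgrades to a $((1+\varepsilon)\cdot \alpha)$-approximation for the \emph{original} extreme-point weight $\lambda^*$, both when $\lambda^*\in\Compact$ (Continuity only) and when $\lambda^*\in\Lambda\setminus\Compact$ (Projection followed by Continuity), with the calibration $\varepsilon'=\sqrt{1+\varepsilon}-1$, $\beta=(1+\varepsilon')\alpha$ guaranteeing the composed factor $(1+\varepsilon)\cdot\alpha$ in either case — this is exactly the two-case analysis~\ref{item:Convex-Approx:weight-compact}/\ref{item:Convex-Approx:weight-notin-compact} of Section~\ref{sec:grid} that must be invoked to close the Proposition~\ref{prop:SuffCondForApproxPoly} hypothesis.
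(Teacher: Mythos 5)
Your proposal is correct and follows essentially the same route as the paper's proof: termination via the fact that $\ALG_{\alpha}$ is only ever called on grid weight vectors so that $R\subseteq\Grid$ bounds the number of productive iterations, correctness by feeding the two-case rounding analysis of Section~\ref{sec:grid} into Proposition~\ref{prop:SuffCondForApproxPoly}, and the running time by combining Lemma~\ref{lem:Approx+Convex:Cardinality-grid}, Proposition~\ref{cor:BoundOnVertices}, and Corollary~\ref{cor:RunningTimeVertexEnumeration}. Your explicit chaining $z^*\leq(\lambda^*)^\top f(x'')\leq(1+\varepsilon)\cdot\alpha\cdot(\lambda^*)^\top f(x)$ spells out a step the paper leaves implicit, but the argument is the same.
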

\begin{proof}
	By Lemma~\ref{lem:Approx+Convex:Cardinality-grid} and Corollary~\ref{cor:BoundOnVertices}, the number of iterations of the while loop is in $\mathcal{O} \left( \lvert \Grid \rvert  \cdot \lvert \Grid \rvert ^{\lfloor \frac{d}{2} \rfloor} \right) =\mathcal{O} \left( \lvert \Grid \rvert ^{\lfloor \frac{d}{2} \rfloor + 1} \right)$, so the algorithm terminates.
	
	\medskip
	
	Next, we prove correctness. 
	Let $S$ be the set of solutions at termination and let $(\lambda^*_1, \ldots, \lambda^*_d, z^*)$ be an extreme point of $D(S)$. If $\lambda^* \coloneqq (\lambda_1,\ldots,\lambda_d) \notin P_<(I)$ for each $\emptyset \neq I \subseteq \{1, \dots, d\}$, no rounding has been applied and, thus, algorithm~$\ALG_{\alpha}$ is called for the weight vector~$\bar{\lambda} = \texttt{GridRounding}(\lambda^*,\varepsilon)$.
	Hence, there exists a solution $x^* \in \hat{X}$ such that $x^*$ is an $\alpha$-approximation for $\bar{\lambda}$. By Proposition~\ref{prop:Approx+Convex:GridRoundingCorrect+RunTime}, $x^*$ is a $((1 + \varepsilon) \cdot \alpha)$-approximation for $\lambda^*$. If $\lambda^* \in P_<(I)$ of some $\emptyset \neq I \subseteq \{1, \dots,d\}$, algorithm~$\ALG_{\alpha}$ is called for the weight vector $\bar{\lambda} = \texttt{GridRounding}(\lambda',\varepsilon')$, where $\lambda'$ is the weight vector returned by $\texttt{BoundaryRounding}(\lambda^*,\varepsilon',\LB,\UB)$. 
	Hence, there exists a solution $x^* \in S$ such that $x^*$ is an $\alpha$-approximation for $\bar{\lambda}$. Therefore, by Proposition~\ref{prop:Approx+Convex:GridRoundingCorrect+RunTime}, it holds that $x^*$ is an $\alpha$-approximation for~$\lambda'$, which, by Proposition~\ref{prop:CorrectnessBoundaryRounding}, implies that $x^*$ is a $(1 + \varepsilon') \cdot \alpha + \varepsilon'$-approximation for $\lambda$. Since~$(1 + \varepsilon') \cdot \alpha + \varepsilon' \leq (1 + \varepsilon) \cdot \alpha$, the solution~$x^*$ is a $((1 + \varepsilon \cdot \alpha)$-approximation for~$\lambda^*$. By Proposition~\ref{prop:SuffCondForApproxPoly}, we obtain that $S$ is a $((1 + \varepsilon) \cdot \alpha)$-convex approximation set.
	
	\medskip
	
	To conclude the proof, it is left to show the bound on the asymptotic worst-case running time. The vertex enumeration algorithm in Steps~\ref{alg:FPTOAA:vertexenumeration} and~\ref{alg:FPTOAA:vertex-enum2}, with an asymptotic running time of $\mathcal{O}\left( \lvert\Grid \rvert \cdot \log(\lvert\Grid\rvert) + \lvert\Grid\rvert^{\lfloor \frac{d}{2} \rfloor} \right)$ by Corollary~\ref{cor:RunningTimeVertexEnumeration}, is called whenever a new solution is found. Since $\ALG_{\alpha}$ is called at most $\mathcal{O}(\lvert\Grid\rvert)$ times, the accumulated running time of the vertex enumeration algorithm and the accumulated running time of $\ALG_{\alpha}$ are in
	$$\mathcal{O}\left( \lvert \Grid \rvert \cdot \left( \lvert\Grid \rvert \cdot \log( \lvert \Grid \rvert) + \lvert\Grid \rvert^{\lfloor \frac{d}{2} \rfloor} +  T_{\ALG_{\alpha}} \right) \right).$$
	Next, the remaining running time in each iteration is bounded. In each iteration, the $\texttt{BoundaryRounding}(\lambda^*,\varepsilon',\beta,\LB,\UB)$ algorithm is called and has constant running time by Proposition~\ref{prop:CorrectnessBoundaryRounding} and Proposition~\ref{prop:Approx+Convex:EncodingLengthExtremeVertices}. Note that \texttt{GridRounding} is only applied for weight vectors $\bar{\lambda} \notin P_<(I)$ for all $\emptyset \neq I \subseteq \{1, \dots, d\}$. 
	Consequently, by Proposition~\ref{prop:Approx+Convex:GridRoundingCorrect+RunTime} and Proposition~\ref{prop:Approx+Convex:EncodingLengthExtremeVertices}, the running time of $\texttt{GridRounding}(\lambda',\varepsilon',\LB,\UB)$ and $\texttt{GridRounding}(\lambda',\varepsilon,\LB,\UB)$ is in $\mathcal{O} \left(  \lvert \Grid \rvert \right)$.
	
	The set~$R$ can be represented as a matrix, where the entry $[a_1, \ldots, a_d]$ stores whether $\ALG_\alpha$ has been called for $(1, (1 + \varepsilon')^{a_1}, \ldots, (1 + \varepsilon')^{a_d})$. Hence, inserting an element to~$R$ and checking whether an weight vector~$\lambda$ is in~$R$ can be done in constant time.\footnote{Note that, in practice, it is preferable to implement~$R$ as a hash table.}
	Hence, the remaining running time of the while-loop is in $$\mathcal{O} \left( \lvert\Grid \rvert \cdot \lvert\Grid \rvert^{\lfloor \frac{d}{2} \rfloor} \cdot \lvert \Grid \rvert \right) =\mathcal{O} \left( \lvert\Grid\rvert^{\lfloor \frac{d}{2} \rfloor + 2} \right).$$
	In summary, the running time of Algorithm~\ref{alg:FPTOAA} is in
	\begin{align*}
	&\mathcal{O} \left(T_{\LB/\UB}  +  \lvert\Grid\rvert^{\lfloor \frac{d}{2} \rfloor +2}  + \lvert\Grid \rvert \cdot T_{\ALG_{\alpha}} \right).		
	\end{align*}
\end{proof}
\noindent
In particular, Theorem~\ref{th:CorrectnessAndRunningTime} yields:
\begin{corollary}
	Algorithm~\ref{alg:FPTOAA} yields an MFPTcAS if either an exact polynomial-time algorithm~$\ALG_{1}$ or an FPTAS is available for the weighted sum scalarization. If a PTAS is available for the weighted sum scalarization, Algorithm~\ref{alg:FPTOAA} yields an MPTcAS.
\end{corollary}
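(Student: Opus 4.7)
The plan is to derive the corollary directly from the running time bound in Theorem~\ref{th:CorrectnessAndRunningTime} together with the cardinality bound on~$\Grid$ from Lemma~\ref{lem:Approx+Convex:Cardinality-grid}. For each of the three hypotheses (exact polynomial-time algorithm, FPTAS, PTAS), I would verify that the three summands
$T_{\LB/\UB} + \lvert\Grid\rvert^{\lfloor d/2\rfloor+2} + \lvert\Grid\rvert\cdot T_{\ALG_\alpha}$
are polynomial in the appropriate quantities, and that, given a target approximation quality~$1+\hat\varepsilon$, one can tune $\varepsilon$ and~$\alpha$ so that $(1+\varepsilon)\cdot\alpha\leq 1+\hat\varepsilon$ while keeping $\tfrac{1}{\varepsilon}$ and~$\log\alpha$ under control.

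First, I would address the exact case. Setting $\alpha = 1$, so $\log\alpha = 0$, and observing that $\log(\UB/\LB)$ is polynomially bounded in the encoding length of the instance by Assumption~\ref{ass:mo}, Lemma~\ref{lem:Approx+Convex:Cardinality-grid} yields that $\lvert\Grid\rvert$ is polynomial in the instance size and $\tfrac{1}{\varepsilon}$. Since $d$ is a constant, $T_{\LB/\UB}$ is polynomial in the instance size, and $T_{\ALG_1}$ is polynomial by assumption, the whole running time bound in Theorem~\ref{th:CorrectnessAndRunningTime} is polynomial in the instance size and $\tfrac{1}{\varepsilon}$, establishing that Algorithm~\ref{alg:FPTOAA} (parameterized by~$\varepsilon = \hat\varepsilon$) is an MFPTcAS.

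Next, I would handle the FPTAS case. Given a target~$\hat\varepsilon \in (0,1)$, I would pick $\varepsilon_1,\varepsilon_2 > 0$ both $\Theta(\hat\varepsilon)$ such that $(1+\varepsilon_1)(1+\varepsilon_2) \leq 1+\hat\varepsilon$, run the algorithm with $\varepsilon = \varepsilon_2$ and with $\ALG_\alpha$ instantiated as the FPTAS member with quality $\alpha = 1+\varepsilon_1$. Then $T_{\ALG_\alpha}$ is polynomial in the instance size and $\tfrac{1}{\varepsilon_1}$, hence in $\tfrac{1}{\hat\varepsilon}$, and $\log\alpha \leq \varepsilon_1$ is also polynomially bounded, so Lemma~\ref{lem:Approx+Convex:Cardinality-grid} again gives a polynomial bound on~$\lvert\Grid\rvert$ in terms of the instance size and~$\tfrac{1}{\hat\varepsilon}$. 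Theorem~\ref{th:CorrectnessAndRunningTime} then certifies that the output is a $((1+\varepsilon_2)\cdot(1+\varepsilon_1))$-convex approximation set, i.e.\ a $(1+\hat\varepsilon)$-convex approximation set, in time polynomial in the instance size and $\tfrac{1}{\hat\varepsilon}$; this is an MFPTcAS. The PTAS case proceeds identically, but for each fixed~$\hat\varepsilon>0$ the parameters~$\varepsilon_1,\varepsilon_2$ are fixed, so $T_{\ALG_{1+\varepsilon_1}}$ and~$\lvert\Grid\rvert$ need only be polynomial in the instance size; this yields an MPTcAS.

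The argument contains no real obstacle: the heavy lifting has already been done in Theorem~\ref{th:CorrectnessAndRunningTime} and Lemma~\ref{lem:Approx+Convex:Cardinality-grid}. The only point requiring mild care is the composition of approximation factors, where I must ensure that the choice of~$(\varepsilon_1,\varepsilon_2)$ with $(1+\varepsilon_1)(1+\varepsilon_2)\leq 1+\hat\varepsilon$ still leaves both quantities polynomially related to~$\hat\varepsilon$, e.g.\ by taking $\varepsilon_1 = \varepsilon_2 = \sqrt{1+\hat\varepsilon}-1 = \Theta(\hat\varepsilon)$ for $\hat\varepsilon \in (0,1)$.
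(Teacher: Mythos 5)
Your proposal is correct and follows exactly the route the paper intends: the corollary is stated as an immediate consequence of Theorem~\ref{th:CorrectnessAndRunningTime} together with the grid-cardinality bound of Lemma~\ref{lem:Approx+Convex:Cardinality-grid}, and your case analysis (exact, FPTAS, PTAS) with the factor-splitting $(1+\varepsilon_1)(1+\varepsilon_2)\leq 1+\hat\varepsilon$ and $\varepsilon_1,\varepsilon_2=\Theta(\hat\varepsilon)$ is precisely the bookkeeping the paper leaves implicit. No gaps.
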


\section{Computational Experiments and Results}\label{sec:CompStudy}
In this section, we compare the performance of each available convex approximation algorithm on instances of the $3$-objective knapsack problem as well as the $3$-objective symmetric metric traveling salesman problem. For convenience, we denote by~$\DYAlg$, $\GridAlg$, and $\FPTOAAlg$ the algorithms of \cite{Diakonikolas:thesis}, \cite{Helfrich:mp-approximation}, and Algorithm~\ref{alg:FPTOAA}, respectively.

\medskip

For the knapsack problem, we follow~\cite{Bazgan+etal:fptas-mulicrit-knapsack} and consider two types of instances. The first type are \emph{uniform} instances for which the weight and the $i$th costs of each item are independently and uniformly sampled as integers in the interval~$[0,1000]$, and the capacity is set to half of the total weight of all items rounded up to the nearest integer. The second type of instances are \emph{conflicting} instances for which the weight of each item is independently and uniformly sampled as an integer in the interval~$[0,1000]$, the capacity is set to half of the total weight of all items rounded up to the nearest integer, and the costs of each item are sampled to be negatively correlated to each other. More precisely, for each item~$e$,~$f_1(e)$ is an integer uniformly generated in $[0,1000]$, $f_2(e)$ is an integer uniformly distributed in~$[1000 - f_1(e)]$, and $f_3(e)$ is an integer uniformly generated in~$[\max \{ 900 - f_1(e) - f_2(e), 0  \}, \min \{ 1100 - f_1(e) - f_2(e), 1000 -  f_1(e)\}]$. For each $n \in \{10,20,\ldots,250\}$ and each type, five $3$-objective knapsack instances are generated. For the weighted sum scalarization, the well-known \ExtGreedy~Algorithm (see, for example,~\cite{Kellerer+etal:book}) constitutes a $2$-approximation algorithm.

For the symmetric metric traveling salesman problem, we follow~\cite{Cornu:tsp,Florios:tsp,Lust:tsp,Paquete:tsp} and use \texttt{portgen} of the \text{DIMACS TSP} instance generator\footnote{\url{http://archive.dimacs.rutgers.edu/Challenges/TSP/}} to obtain, for each $i = 1,2,3$, integer coordinates of cities on a $1000 \times 1000$ square, on the basis of which the $i$th cost between each two cities is chosen to be the Euclidean distance of their $i$th coordinates.  For each $n \in \{10,20,\ldots,100\}$, five $3$-objective symmetric metric traveling salesman instances are generated. Note that each weighted sum scalarization of each $3$-objective symmetric metric traveling salesman instance is a single-objective symmetric metric traveling salesman instance, for which the well-known  \Chris~algorithm~\citep{Christofides:TSP}~constitutes a $\frac{3}{2}$-approximation algorithm.\footnote{Note that other types of traveling salesman instances are studied in \cite{Cornu:tsp,Florios:tsp,Lust:tsp,Paquete:tsp} as well. However, random instances or mixed instances do not necessarily yield metric instances and, thus, approximation algorithms with bounded approximation quality for the weighted sum scalarization of those instances do not exist unless $\textsf{N} = \textsf{NP}$, cf.~\cite{Williamson+Shmoys:Approximation}. This renders such instances unsuitable for our purposes. Additionally, clustered instances are not well-defined for $n < 100$. On the other hand, algorithms for obtaining optimal solution sets for the weighted sum scalarization, which can produce reference sets for determining various performance measures, do not finish within reasonable time on instances with $n \geq 100$ cities, which has also been observed in~\cite{Oezpeynirci2010ESN,Florios:tsp}.}

\medskip

To determine the a posteriori convex approximation quality of the returned solution sets~$S$, i.e., the smallest factor~$\beta$ for which $S$ is a $\beta$-convex approximation set, we introduce the appropriate variant of the $\varepsilon$-indicator~\citep{Zitzler2003}:
\begin{definition}
	 Let $\I = (X,f)$ of an instance of a $d$-objective minimization problem and let $S^*$ be an optimal solution set in $\I$ for the weighted sum scalarization. Then, the \emph{$\varepsilon$-convex indicator} of a set~$S \subseteq X$ of feasible solutions is defined by
	\begin{align*}
	\cInd(S) \coloneqq 
 \max_{\lambda \in \Lambda}\ \frac{\min_{x \in S} \ \lambda^\top f(x)}{ \min_{x^* \in S^*}\ \lambda^\top f(x^*)}.
	\end{align*}
	In an instance~$\I = (X,f)$ of a $d$-objective maximization problem with optimal solution set~$S^*$ for the weighted sum scalarization, the \emph{convex $\varepsilon$-indicator} of a set~$S \subseteq X$ of feasible solutions is defined by
	\begin{align*}
	\cInd(S) \coloneqq 
 \max_{\lambda \in \Lambda}\ \frac{ \max_{x^* \in S^*}\ \lambda^\top f(x^*)}{\max_{x \in S}\ \lambda^\top f(x)}.
	\end{align*}
\end{definition}
Note that our assumptions imply that, for each weight vector~$\lambda \in \Lambda$, there exists a solution~$x^\lambda$ that is  optimal for~$\lambda$. Hence, it does indeed suffice to compare solution sets~$S$ with an (arbitrary) optimal solution set~$S^*$ for the weighted sum scalarization.

\medskip

All algorithms have been implemented using \texttt{Python 3.7 (Conda 4.8)}, where vertex enumerations are performed with the algorithm \texttt{QHull}~\citep{Barber:QuickhullAlgorithm} provided by the package \texttt{scipy~1.4.1}. An implementation of \Chris~is provided by the package \texttt{networkx~2.6.3}. 
In order to find an optimal solution set~$S^*$ for the weighted sum scalarization for each instance, we have also implemented exact algorithms for the weighted sum scalarizations based on \texttt{Gurobi~9.1.1} and the dual variant of Benson's Outer Approximation Algorithm, and implemented a filtering method to remove redundant solutions. All instances and the source code are available via \git.  

\medskip 
 
All experiments have been performed on a computer server equipped with two Intel(R) Xeon(R) CPUs E5-2670 (single processor specifications: nominal speed~2.9GHz, boost up to 3.8GHz, 8~cores, 16~threads, 20MB~Cache) and 192GB DDR-3 ECC RAM at 1333MHz using the operating system environment Ubuntu Linux~11. In each run, a time limit of 1~hour has been set.  To compare the performances of the convex approximation algorithms, we report 
\begin{itemize}
    \item the average running time in seconds (s) of 10 runs,
	\item the $\varepsilon$-convex indicator of the returned set~$S$,
	\item the ratio of the cardinality of the returned set~$S$ to the cardinality of the  solution set~$S^*$ for the weighted sum scalarization obtained by the dual variant of Benson's Outer Approximation Algorithm with the filtering method,
\end{itemize}
for each combination of the convex approximation algorithms and~$\varepsilon \in \{0.1,0.25,0.5\}$.

\medskip

The results are summarized in Figure~\ref{fig:results_knapsack} and Figure~\ref{fig:results_tsp} for the knapsack and traveling salesman problem, respectively. Here, we note that, despite its importance in theory, no run of \DYAlg~finished within 1~hour on any instance and any $\varepsilon$, so \DYAlg~is omitted in the following discussion.

\begin{figure}
    \centering
    \includegraphics[width = \textwidth]{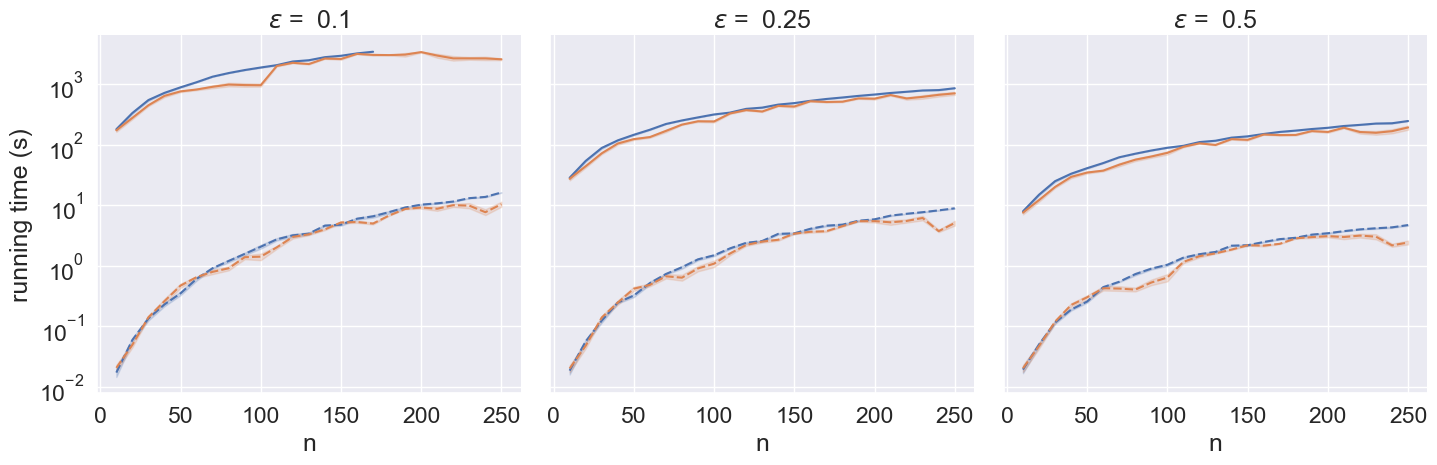}
    \includegraphics[width = \textwidth]{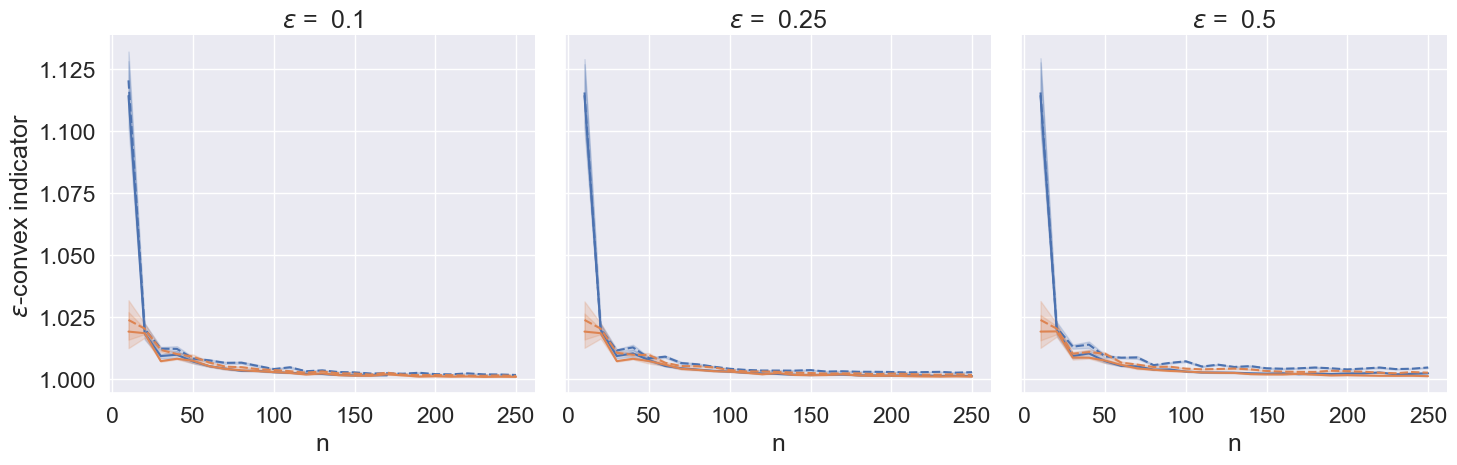}
    \includegraphics[width = \textwidth]{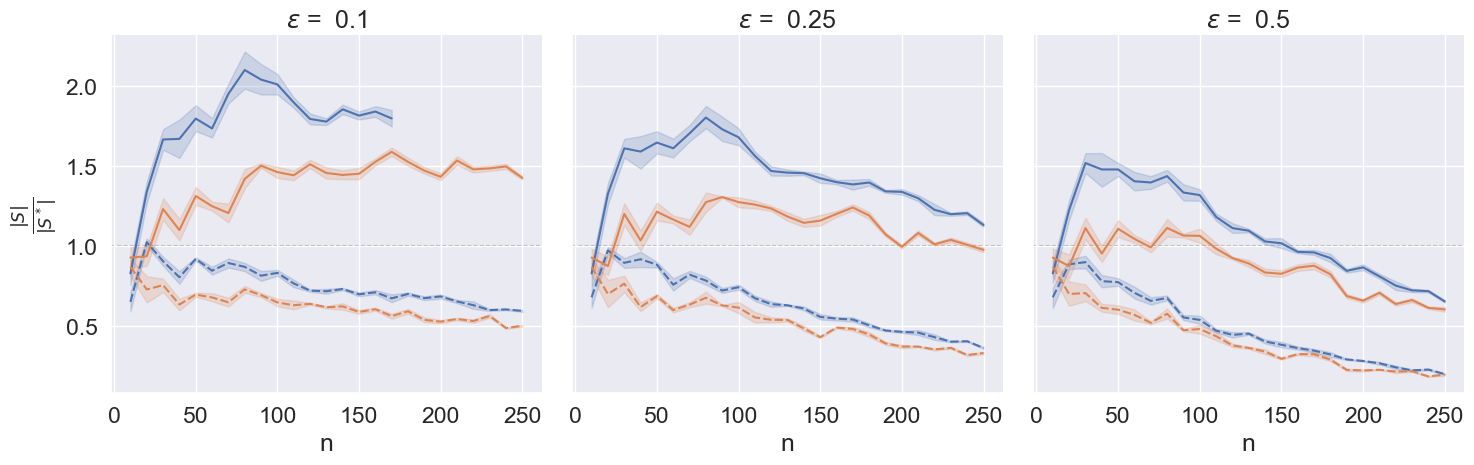}
    \caption{Results for multiobjective knapsack instances. solid line: \GridAlg, dashed line: \FPTOAAlg, orange: uniform instances, blue: conflicting instance.}
    \label{fig:results_knapsack}
\end{figure}
\begin{figure}
    \centering
    \includegraphics[width = \textwidth]{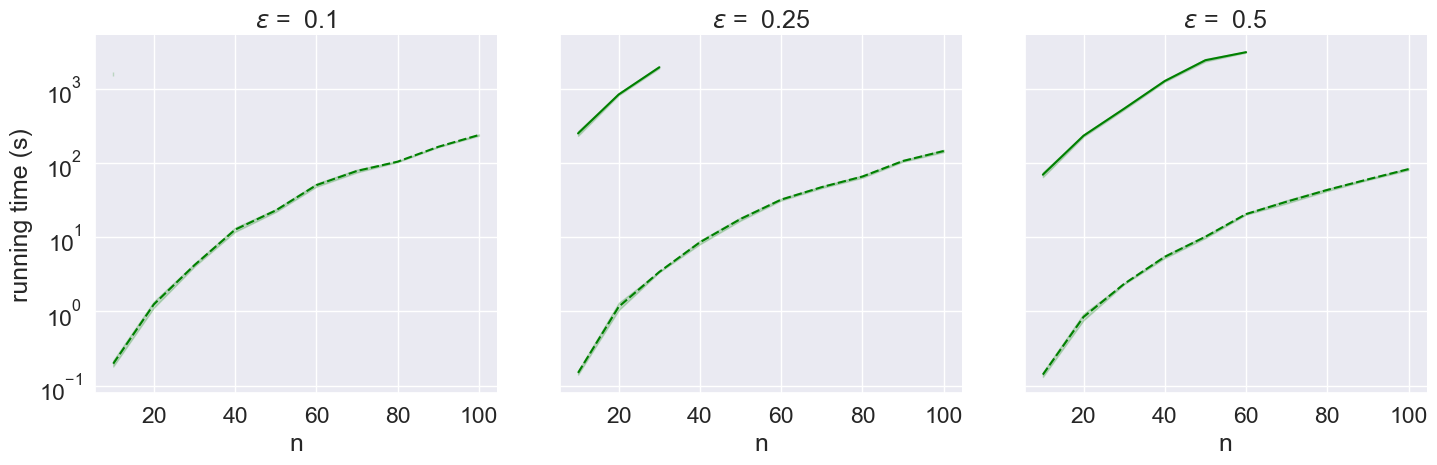}
    \includegraphics[width = \textwidth]{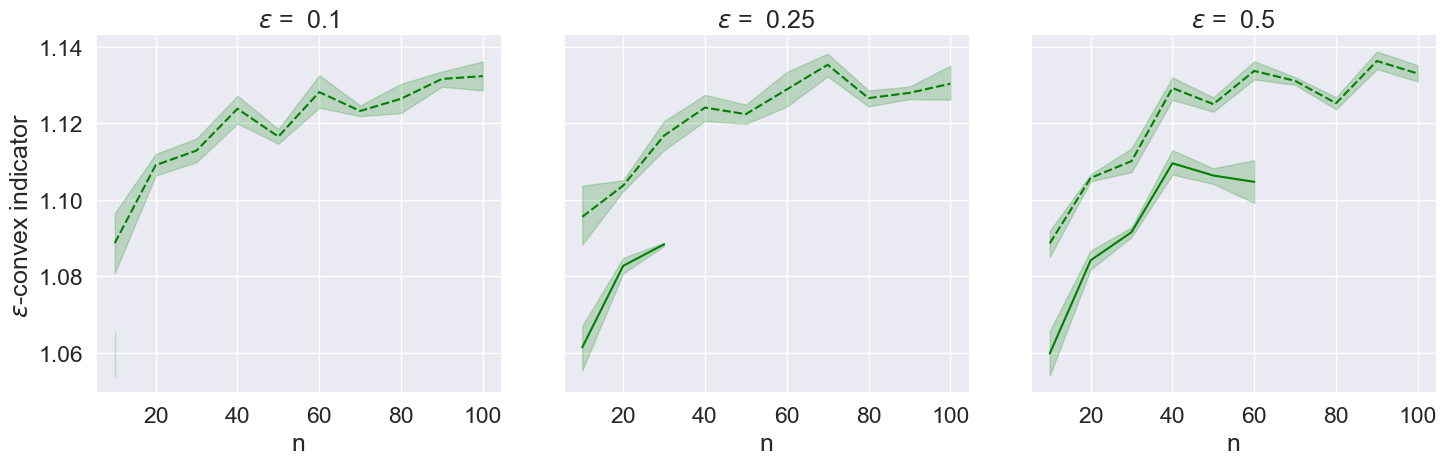}
    \includegraphics[width = \textwidth]{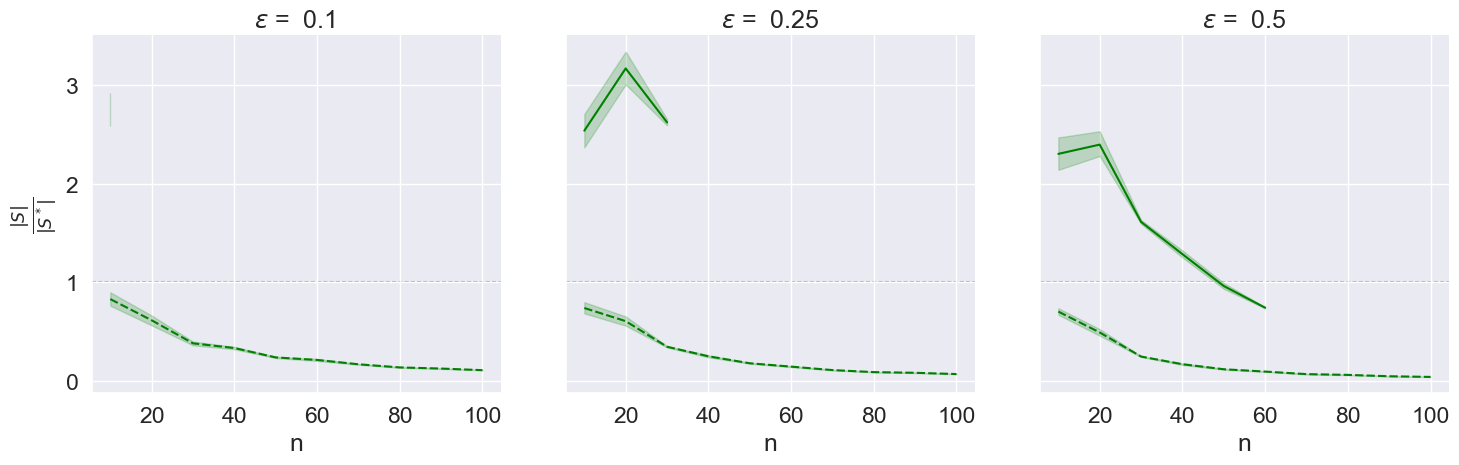}
    \caption{Results for multiobjective symmetric metric traveling salesman instances. solid line: \GridAlg, dashed line: \FPTOAAlg.}
    \label{fig:results_tsp}
\end{figure}

With respect to average running time, we can observe that \GridAlg~is significantly slower than \FPTOAAlg~on each instance of both problem types. This is also settled by the fact that \FPTOAAlg~terminated within 1 hour in each run, whereas several runs of \GridAlg~did not: knapsack instances with $n \geq 160$ and $\varepsilon = 0.1$, as well as traveling salesman instances with $n \geq 30$ and $\varepsilon = 0.1$, $n \geq 50$ and $\varepsilon=0.25$, $n \geq 80$ and $\varepsilon=0.5$ could not be solved within the time limit. Lastly, for each problem type and convex approximation algorithm, we can observe that the average running time increases with increasing~$n$, and is inverse proportional to~$\varepsilon$.

In each instance of each problem type and for every combination of~$\varepsilon$ and convex approximation algorithm, the $\varepsilon$-convex indicator is  significantly smaller than the theoretical bounds: for knapsack instances, the $\varepsilon$-convex indicator is always less that $1.13$, whereas the theoretical upper bounds satisfy~$(1 + \varepsilon) \cdot 2 \geq 2.2$ for every~$\varepsilon \in \{0.1,0.25,0.5\}$. For traveling salesman instances, the $\varepsilon$-convex indicator is always less that $1.2$, contrasting the theoretical upper bounds~$(1 + \varepsilon) \cdot 1.5 \geq 1.65$ for every~$\varepsilon \in \{0.1,0.25,0.5\}$. Moreover, observe that \GridAlg~returns, if terminated within the time limit, solution sets achieving a slightly better $\varepsilon$-convex indicator in comparison to solution set returned by $\FPTOAAlg$. Lastly, for each problem type and convex approximation algorithm, we can observe that the $\varepsilon$-convex indicator is not significantly affected by variations off~$n$ and increases slightly with increasing~$\varepsilon$.

Concerning the cardinality of the returned solution sets, we can observe that,
for each problem type and each $\varepsilon$, the cardinality of the convex approximation
set returned by \FPTOAAlg~is always smaller than the cardinality of the one returned
by \GridAlg. In addition, the cardinality of the convex approximation set returned
by \FPTOAAlg~is smaller than the cardinality of the reference
optimal solution set for the weighted sum scalarization in each instance. For \GridAlg, this can only
be observed for larger size instances. Nevertheless, we can observe that, for each problem type and convex approximation algorithm, the ratio of the cardinality of the returned sets to the cardinality of the optimal solution sets for the weighted sum scalarization is inverse proportional to~$n$ and~$\varepsilon$.

In summary, these results indicate that \FPTOAAlg~significantly outperforms
every other existing convex approximation algorithm in terms of running time
and cardinality of the returned convex approximation set while still providing solution sets achieving comparable $\varepsilon$-convex indicators. Additionally, the $\varepsilon$-convex indicators are noticeably smaller than the theoretical bounds. 

\section{Conclusion}
Multiobjective minimization problems can be approximated well by means of optimal solution sets for the weighted sum scalarization. However, for multiobjective maximization problems, strong impossibility results are 
known in this context. This polarity does not exist anymore when convex approximation sets are considered, though the efficient computation of such convex approximation sets has mainly been studied from a theoretical point of view so far.

This paper presents an algorithm to compute convex approximation sets that (1)~relies on an efficient exact or approximate algorithm for the weighted sum scalarization and is, thus, applicable to a large variety of multiobjective optimization problems, (2)~returns polynomial-sized solution sets that constitute convex approximation sets with approximation quality arbitrarily close to the approximation quality of the weighted sum algorithm, (3)~yields an M(F)PTcAS if a polynomial-time exact algorithm or an (F)PTAS is used for the weighted sum scalarization, and (4)~outperforms all other existing convex approximation algorithms in terms of practical running time and cardinality of the returned solution sets. Property~(4) is demonstrated in the first comprehensive computational study of convex approximation algorithms conducted so far.
Consequently, this paper initiates a benchmarking for general convex approximation algorithms and lays the foundation for such algorithms to gain practical importance alongside heuristics and exact methods.

Convex approximation sets as studied so far aim to achieve the same approximation quality in all objectives. However, polynomially-sized (classic) approximation sets that are exact in one objective exist under mild assumptions as well and can be computed efficiently if (and only if) the so-called \emph{dual restrict problem} can be solved efficiently~\citep{Herzel+etal:dualrestrict}. Thus, convex approximation sets that are exact in one (or several) objectives exist and can be computed efficiently via the dual restrict problem under the same assumptions. However, it is unclear whether the polynomial-time solvability of the dual restrict problem is also a necessary condition for the polynomial-time computation of convex approximation sets.
Other possible directions for future research concern further improvements of our algorithm. Here, possible approaches are double description methods, or designing versions of our approach that are specially tailored to classes of problems with a specific structure.

\section*{Acknowledgements}
This work was funded by the Deutsche Forschungsgemeinschaft (DFG, German Research Foundation) – Project number~398572517. We also acknowledge the valuable assistance of Nico Gerber and Carla Eva Hamm in carrying out the computational study presented in Section~\ref{sec:CompStudy}.

\bibliography{main}

\begin{appendices}
\section{Invariant of Sorting w.r.t.\ Lifting}\label{lem:P_<(I)-rounding-invariant}
	Let~$\lambda \in \R^d_\geq$ such that $\lambda_1 \leq \lambda_2 \leq \ldots, \lambda_d$. Further, let~$I = \{1,\ldots,k\}$ be an index set for some~$k \in \{ 1,\ldots d-1\}$ such that $\lambda \in P_<(I)$. Let~$\bar{\lambda} \in \R^d_\geq$ be defined as in Equation~\eqref{eq:BoundaryRounding} with index set~$I$. Then, $\bar{\lambda}_1 \leq \bar{\lambda}_2 \leq \ldots, \bar{\lambda}_d$ as well. Further,
	\begin{itemize}
		\item if $\lambda \notin P_<(\{1,\ldots,k'\})$ for some~$1 \leq k' < k$, then $\bar{\lambda} \notin P_<(\{1,\ldots,k'\})$,
		\item if $\lambda \in P_=(\{1,\ldots,k'\})$ for some~$1 \leq k' < k$ and $\sum_{j=1}^k \lambda_j > 0$, then $\bar{\lambda} \in P_=(\{1,\ldots,k'\})$.
	\end{itemize}
\begin{proof}
We first  prove that $\bar{\lambda}_1 \leq \bar{\lambda}_2 \leq \ldots \leq \bar{\lambda}_d$. 
If $\sum_{j=1}^k \bar{\lambda}^{k}_j >0$, it is, for $i = 1, \ldots, k - 1$,
\begin{align*}
\bar{\lambda}_i =  \frac{\lambda_i}{\sum_{j=1}^k \lambda_j} \cdot c \cdot \lambda_{k + 1} \leq \frac{\lambda_{i+1}}{\sum_{j=1}^k \lambda_j} \cdot c \cdot \lambda_{k + 1} = \bar{\lambda}_{i+1}
\end{align*}
and
\begin{align*}
\bar{\lambda}_k =  \frac{\lambda_k}{\sum_{j=1}^k \lambda_j} \cdot c \cdot \lambda_{k + 1} \leq \lambda_{k +1} = \bar{\lambda}_{k + 1}.
\end{align*}
If $\lambda_j=0$ for all $j=1, \ldots, k$, it holds, for $i = 1, \ldots, k - 1$, that
\begin{align*}
\bar{\lambda}_i = \frac{1}{k +1} \cdot c \cdot \lambda_{k +1} = \bar{\lambda} _{i + 1}
\end{align*}
and, since $c \in (0,1)$,
\begin{align*}
\bar{\lambda}_k = \frac{1}{k +1} \cdot c \cdot \lambda_{k +1} \leq \lambda_{k +1} = \bar{\lambda}_{k + 1}.
\end{align*}
In both cases it holds, for $i = k + 1, \ldots, d-1$, that $\bar{\lambda}_i = \lambda_i \leq \lambda_{i+1} = \bar{\lambda}_{i+1}$.

\medskip

If, additionally, $\lambda \notin P_<(\{1,\ldots,k'\})$ for some~$1 \leq k' < k$, it follows that $\sum_{i = 0}^{k'} \lambda_i \geq c \cdot \lambda_{k' +1}$. Since $k' +1 \leq k$, this implies that
\begin{align}\label{eq:lem:P_<(I)-rounding-invariant}
\sum_{i=1}^{k'} \bar{\lambda}_i = \sum_{i=1}^{k'} \frac{\lambda_i}{\sum_{j=1}^k \lambda_j} \cdot c \cdot \lambda_{k + 1} \geq \frac{ c \cdot \lambda_{k'+1}}{\sum_{j=1}^k \lambda_j} \cdot c \cdot \lambda_{k + 1} = c \cdot  \bar{\lambda}_{k'+1}
\end{align}
and
\begin{align*}
\sum_{i=1}^{k'} \bar{\lambda}_i = \sum_{i=1}^{k'} \frac{1}{k +1} \cdot c \cdot \lambda_{k +1} \geq \frac{1}{k +1} \cdot c \cdot \lambda_{k +1} = \bar{\lambda}_{k+1}
\end{align*}
in the case of $\sum_{j=1}^k \lambda_j > 0$ and $\lambda_j=0$ for all $j=1,\ldots, k$, respectively. 
The second statement holds true since Equation~\eqref{eq:lem:P_<(I)-rounding-invariant} holds then with equality.	
\end{proof}

\section{Proof of Proposition~\ref{prop:CorrectnessBoundaryRounding}}\label{proof-prop-correctness-boundary}
\begin{proof}[]
~\\
    Termination is straightforward. The permutation~$\sigma$ and its inverse~$\sigma^{-1}$ can be determined in $\mathcal{O}(d \cdot \log(d))$ time. The for-loop of Steps~\ref{alg:BoundaryRounding:BeginForLoop}-\ref{alg:BoundaryRounding:EndForLoop} can be realized in $\mathcal{O}(d^2)$ time. All remaining operations run in~$\mathcal{O}(d)$. Since $d$ is fix, asymptotic constant worst-case running time is proven.

    \medskip

    Next, we prove correctness.	If $\flag = \text{FALSE}$, no rounding has been applied, $\lambda' = \lambda$ and every $\beta$-approximation for $\lambda'$ is trivially  a $\beta$-approximation for $\lambda$. Hence, assume that $\flag = \text{TRUE}$ holds true in the following. That is, the if-condition in Step~\ref{alg:BoundaryRounding:IfConditionPI} is satisfied for at least one $k \in \{1, \ldots, d-1\}$.
	
	For $k = 1, \ldots, d-1$, denote by $\bar{\lambda}^k$ the state of the weight vector~$\bar{\lambda}$ at the beginning of iteration~$k$ in the for-loop of Steps~\ref{alg:BoundaryRounding:BeginForLoop}-\ref{alg:BoundaryRounding:EndForLoop}. Further, denote by $\bar{\lambda}^d$ the state of the weight vector~$\bar{\lambda}$ after iteration $d-1$. We prove that the following statements holds true for $k = 2,\ldots,d$:
	\begin{enumerate}
		\item $\bar{\lambda}^k_1 \leq \bar{\lambda}^k_2 \leq \ldots \leq \bar{\lambda}^k_d$.\label{prop:CorrectnessBoundaryRounding:proof:Sorting}
		
		\item $\bar{\lambda}^k \notin P_<(\{0, \ldots, k'\}$ for every $1 \leq k' \leq k-1$.\label{prop:CorrectnessBoundaryRounding:proof:NotInP<}
		
		\item $\bar{\lambda}^k_i \geq \bar{\lambda}^{k-1}_i$ for $i = 1, \ldots, k-1$ and $\bar{\lambda}^k_j = \bar{\lambda}^{k-1}_j$ for $j = k+1,\ldots,d$.\label{prop:CorrectnessBoundaryRounding:proof:greaterequal}
		
		\item Let $R^{k} \subseteq \{0,\dots, k-1\}$ be the set of iterations up to $k-1$ such that the if-condition in Step~\ref{alg:BoundaryRounding:IfConditionPI} has been true. Then, $$\bar{\lambda}^k \in P_=(\{0,\ldots,k'\}) \text{ for all } k' \in R^k$$
		and, in particular, 
		$$\bar{\lambda} \in \conv\left( \{\bar{\lambda}^k\} \cup \{ \proj^{\{0,\dots,k'\}}(\bar{\lambda}^k), k' \in R^k\}\right).$$\label{prop:CorrectnessBoundaryRounding:proof:Conv}
	\end{enumerate}
	Statement~\ref{prop:CorrectnessBoundaryRounding:proof:Sorting} together with Observation~\ref{obs:sorting-P_<(I)} guarantees that, if $\bar{\lambda}^k \in P_<(I)$ for some $k$ and some index set $I \subseteq \{1,\ldots, d\}$, it must be that $\bar{\lambda}^k \in P_<(\{0,\ldots,\lvert I \rvert \})$, and that Step~\ref{alg:BoundaryRounding:InverseSorting} yields indeed the correctly rounded weight vector. 
	Statement~\ref{prop:CorrectnessBoundaryRounding:proof:NotInP<} guarantees that $\bar{\lambda}^d$ satisfies $\bar{\lambda}^d \notin P_<(I)$ for every $\emptyset \neq I \subseteq \{1,\ldots, d\}$ (note that~$P_<(\{1,\ldots, d\})$ does not contain any weight vector). This fact is invariant with respect to rearranging the components and multiplication of the weight vector with a positive scalar. Hence, both $\tilde{\lambda}$ and $\lambda'$ satisfy~$\tilde{\lambda}, \lambda' \notin P_<(I)$ for every $\emptyset \neq I \subseteq \{1,\ldots, d\}$ as well. This yields~$\lambda' \in \Compact$.
	Statement~\ref{prop:CorrectnessBoundaryRounding:proof:greaterequal} is an auxiliary statement to prove Statement~\ref{prop:CorrectnessBoundaryRounding:proof:Conv}. It states that a component is never decreased during the for-loop.
	Statement~\ref{prop:CorrectnessBoundaryRounding:proof:Conv} together with Lemma~\ref{lem:ApproximationGuaranteeWithinTreshhold} implies that every $\beta$-approximation for $\bar{\lambda}^d$ is a $(\beta + \varepsilon')$-approximation for each weight vector in~$\proj^{\{1,\ldots, k'\}}(\bar{\lambda}^d)$, $k' \in R^{d}$. Thus, by Lemma~\ref{lem:Approx+Convex:convexity}, every $\beta$-approximation for $\bar{\lambda}^d$ is a $(\beta + \varepsilon')$-approximation for $\bar{\lambda}$. Again, rearranging the components of both~$\bar{\lambda}$ and~$\bar{\lambda}^d$ according to~$\sigma^{-1}$ does not change this, so every $\beta$-approximation for $\lambda'$ is a $(\beta + \varepsilon')$-approximation for $\lambda$.
	Hence, to prove correctness of Algorithm~\ref{alg:Approx+Convex:GridRounding}, it is left to prove Statements~\ref{prop:CorrectnessBoundaryRounding:proof:Sorting}-\ref{prop:CorrectnessBoundaryRounding:proof:Conv}.	
	
	\medskip
	
	\noindent
	Statements~\ref{prop:CorrectnessBoundaryRounding:proof:Sorting} and \ref{prop:CorrectnessBoundaryRounding:proof:NotInP<} follow by Lemma~\ref{lem:P_<(I)-rounding-invariant}. We prove Statements~\ref{prop:CorrectnessBoundaryRounding:proof:greaterequal} and \ref{prop:CorrectnessBoundaryRounding:proof:Conv} by induction over $k$.
	
	Let $k = 2$. If $\bar{\lambda}^1_1 \geq c \cdot \bar{\lambda}^1_{2}$, it holds that $\bar{\lambda}^{2} = \bar{\lambda}^1$, so there is nothing to prove. Let $\bar{\lambda}^1_1 < c \cdot \bar{\lambda}^1_{2}$.	
	In the case of $ \bar{\lambda}^{1}_1 >0$, it follows  that  $\bar{\lambda}^{2}_1 = \frac{\bar{\lambda}^{1}_1}{\bar{\lambda}^{1}_1} \cdot c \cdot \bar{\lambda}^{1}_{2} > \bar{\lambda}^{1}_1$. In the case of $\bar{\lambda}_1=0$, it holds trivially that $\bar{\lambda}^{2}_1 \geq 0$. Since $\bar{\lambda}^{2}_i = \bar{\lambda}^{1}_i$ for $i = 2,\ldots, d$, Statement~\ref{prop:CorrectnessBoundaryRounding:proof:greaterequal} follows.
	Statement~\ref{prop:CorrectnessBoundaryRounding:proof:Conv} follows by Lemma~\ref{lem:boundaryRounding}.
	
	Assume, that Statements~\ref{prop:CorrectnessBoundaryRounding:proof:Sorting}-\ref{prop:CorrectnessBoundaryRounding:proof:Conv} hold for some $1 \leq k \leq d-1$. If $\sum_{i=1}^k \bar{\lambda}^k_i \geq c \cdot \bar{\lambda}^k_{k +1}$, it holds that $\bar{\lambda}^{k +1} = \bar{\lambda}^k$ and Statements~\ref{prop:CorrectnessBoundaryRounding:proof:Sorting}-\ref{prop:CorrectnessBoundaryRounding:proof:Conv} hold by induction hypothesis. Hence, it can be assumed that $\sum_{i=1}^k \bar{\lambda}^k_i < c \cdot \bar{\lambda}^k_{k +1}$ and, therefore,
	\begin{align*}
	\bar{\lambda}^{k + 1} = 
	\begin{cases}
	\frac{\bar{\lambda}^{k}_i}{\sum_{j=1}^k \bar{\lambda}^{k}_j} \cdot c \cdot \bar{\lambda}^{k}_{k + 1}, &\text{ if } i = 1,\ldots, k \text{ and } \sum_{j=1}^k \bar{\lambda}^{k}_j >0,\\
	\frac{1}{k + 1} \cdot c \cdot \bar{\lambda}^{k}_{k + 1}, &\text{ if } i = 1, \ldots, k \text{ and } \bar{\lambda}^{k}_j =0 \text{ for all } j=1 \ldots, k,\\
	\bar{\lambda}^k_i, &\text{ if } i \geq k +1.
	\end{cases}
	\end{align*} holds true in the following.
	\begin{enumerate} 


		\item[3.] Let $i \in \{1, \ldots, k\}$. Since $\sum_{i=1}^k \bar{\lambda}^k_i < c \cdot \bar{\lambda}^k_{k +1}$, it follows in the case of $\sum_{j=1}^k \bar{\lambda}^{k}_j >0$ that $\bar{\lambda}^{k +1}_i = \frac{\bar{\lambda}^{k}_k}{\sum_{j=1}^k \bar{\lambda}^{k}_j} \cdot c \cdot \bar{\lambda}^{k}_{k + 1} \geq \bar{\lambda}^{k}_i$. In the case of $\bar{\lambda}^k_j=0$ for all $j=1, \ldots, k$, it holds trivially that $\bar{\lambda}^{k + 1}_i \geq 0$. Since $\bar{\lambda}^{k + 1}_i = \bar{\lambda}^{k}_i$ for $i = k +1,\ldots, d$, Statement~\ref{prop:CorrectnessBoundaryRounding:proof:greaterequal} follows.
		
		\item[4.] If $\bar{\lambda}^k_j=0$ for all $j=1, \ldots, k$, it must be that $\bar{\lambda} \notin P_\leq(\{1,\ldots,k'\})$ for every $1 \leq k' \leq k - 1$. Since $\sum_{j=1}^k \bar{\lambda}^{k}_j < c \cdot \bar{\lambda}_{k +1}$ holds true by assumption, it follows that $R^{k +1} = \{k\}$. Hence, it is to show that
		\begin{align*}
		\bar{\lambda}^{k + 1} \in P_= (\{1, \ldots, k\}) \text{ and } \bar{\lambda} \in \conv\{ \bar{\lambda}^{k+1}, \proj^{\{1,\ldots, k\}}(\bar{\lambda}^{k+1}) \}.
		\end{align*}
		It holds that
		\begin{align*}
		\sum_{i=1}^{k} \bar{\lambda}^{k + 1}_i = \sum_{i=1}^{k} \frac{1}{k +1} \cdot c \cdot \bar{\lambda}^k_{k +1} = c \cdot \bar{\lambda}^k_{k +1} = c \cdot \bar{\lambda}^{k + 1}_{k +1},
		\end{align*}
		and, therefore, $\bar{\lambda}^{k + 1} \in P_=(\{1,\dots, k\})$. Furthermore, since $\bar{\lambda} \notin P_\leq(\{1,\ldots,k'\})$ for each $1 \leq k' \leq k - 1$, the if-condition in Step~\ref{alg:BoundaryRounding:IfConditionPI} has not been been true for each $k' < k$ and, thus, it must be that $\bar{\lambda} = \bar{\lambda}^k$. Then, Statement~\ref{prop:CorrectnessBoundaryRounding:proof:greaterequal} states that $\bar{\lambda}^{k +1}_i \geq \bar{\lambda}_i$ for all $i = 1,\ldots, d$, which implies that $\bar{\lambda} \in \conv\{ \bar{\lambda}^{k+1}, \proj^{\{1,\ldots, k\}}(\bar{\lambda}^{k+1}) \}$.\\
		
		Consider now the case $\sum_{j=1}^k \bar{\lambda}^k_j > 0$. Note that $R^{k +1} = R^{k} \cup \{ k \}$. Let $\bar{\lambda}^{k} \notin P_=(\{1,\ldots,k'\})$ for some $k' \leq k -1$. Then, $\sum_{j=1}^{k'} \bar{\lambda}^{k}_i = c \cdot \bar{\lambda}^k_{k'+1}$. Since, $k' +1 \leq k$, it follows that
		\begin{align*}
		\sum_{i=1}^{k'} \bar{\lambda}^{k + 1}_i = \sum_{i =1}^{k'} \frac{\bar{\lambda}^{k}_i}{\sum_{j = 1}^k \bar{\lambda}^{k}_j} \cdot c \cdot \bar{\lambda}^{k}_{k + 1} = \frac{c \cdot \bar{\lambda}^{k}_{k'+1}}{\sum_{j = 1}^k \bar{\lambda}^{k}_j} \cdot c \cdot \bar{\lambda}^{k}_{k + 1} = c \cdot \bar{\lambda}^{k +1}_{k'+1}
		\end{align*}
		and, therefore, $\bar{\lambda}^{k +1} \in P_=(\{1,\ldots,k'\})$ for all $k' \in R^k$. Furthermore,
		\begin{align*}
		\sum_{i=1}^k \bar{\lambda}^{k + 1}_i = \sum_{i = 1}^k \frac{\bar{\lambda}^{k}_i}{\sum_{j = 1}^k \bar{\lambda}^{k}_j} \cdot c \cdot \bar{\lambda}^{k}_{k + 1} = c \cdot \bar{\lambda}^{k}_{k +1} = \bar{\lambda}^{k +1}_{k + 1}.
		\end{align*}
		This yields $\bar{\lambda}^k \in P_=(\{1,\ldots,k'\}) \text{ for all } k' \in R^{k +1 }$ and it is left to show that $$\bar{\lambda} \in \conv\left( \{  \bar{\lambda}^{k +1}\} \cup \{ \proj^{\{1,\dots,k'\}}(\bar{\lambda}^k), k' \in R^{k + 1}\}\right).$$
		By induction hypothesis, we know that $$\bar{\lambda} \in \conv\left( \{\bar{\lambda}^k\} \cup \{ \proj^{\{1,\ldots,k'\}}(\bar{\lambda}^k), k' \in R^k\} \right),$$ which means that there exist coefficients $\theta \in [0,1]$ and $\theta_{k'} \in [0,1], k' \in R^k$ such that
		\begin{align*}
		\bar{\lambda} = \theta \cdot \bar{\lambda}^k + \sum_{k' \in R^k} \theta_{k'} \cdot \proj^{\{1,\ldots, k'\}}(\bar{\lambda}^k) \text{ and } \theta + \sum_{k' \in R^k} \theta_{k'} = 1.
		\end{align*}
		Statement~\ref{prop:CorrectnessBoundaryRounding:proof:greaterequal} implies that $\bar{\lambda}^k \in \conv\left( \{\bar{\lambda}^{k +1},\proj^{\{1, \ldots,k\}}(\bar{\lambda}^{k +1})\}\right)$, i.e., there exists some $\mu \in [0,1]$ such that
		\begin{align*}
		\bar{\lambda}^k = \mu \cdot \bar{\lambda}^{k +1} + (1 - \mu) \cdot \proj^{\{1,\ldots, k\}}(\bar{\lambda}^{k +1}).
		\end{align*}
		Note that, since $\{1,\ldots, k'\} \subseteq \{1,\ldots, k\}$ for $k' \in R^k$, it holds that
		\begin{align*}
		\proj^{\{1,\ldots,k'\}} \left( \proj^{\{1,\ldots, k\}}(\bar{\lambda}^{k +1})\right) = \proj^{\{1,\ldots, k\}}(\bar{\lambda}^{k +1})
		\end{align*}
		and, thus,
		\begin{align*}
		\bar{\lambda} &= \theta \cdot \bar{\lambda}^{k} +  \sum_{k' \in R^k} \theta_{k'} \cdot \proj^{\{1,\ldots,k'\}} (\bar{\lambda}^{k})\\
		&= \theta \cdot \left( \mu \cdot \bar{\lambda}^{k +1} + (1 - \mu) \cdot \proj^{\{1,\ldots,k\}}(\bar{\lambda}^{k +1})  \right) \\
		&\hspace{1.2cm}+ \sum_{k' \in R^k} \theta_{k'} \cdot \proj^{\{1,\ldots,k'\}} \left(  \mu \cdot \bar{\lambda}^{k+1} + (1 - \mu) \cdot \proj^{\{1,\ldots,k\}}(\bar{\lambda}^{k+1})  \right)\\
		&= \theta \cdot \mu  \cdot  \bar{\lambda}^{k+1} + \theta \cdot (1 - \mu) \cdot \proj^{\{1,\ldots,k\}}(\bar{\lambda}^{k +1}) \\
		&\hspace{2.33cm}+\sum_{k' \in R^k} \theta_{k'} \cdot \mu \cdot \proj^{\{1,\ldots,k'\}}( \bar{\lambda}^{k +1} ) \\
		&\hspace{2.33cm}  +\sum_{k' \in R^k} \theta_{k'} \cdot (1 - \mu) \cdot  \proj^{\{1,\ldots,k'\}}\left(\proj^{\{1,\ldots,k\}}(\bar{\lambda}^{k +1})\right)\\
		&=\theta \cdot \mu  \cdot  \bar{\lambda}^{k+1}  + \sum_{k \in R^k} \theta_{k'} \cdot \mu \cdot \proj^{\{1,\ldots,k'\}}( \bar{\lambda}^{k +1} ) \\
		&\hspace{2.33cm}+(1 - \mu) \cdot \proj^{\{1,\ldots,k\}}(\bar{\lambda}^{k+1})
		\end{align*}
		with
		\begin{align*}
		\theta \cdot \mu  + \sum_{k' \in R^k} \theta_{k'} \cdot \mu + (1 - \mu) = \mu + (1 - \mu) = 1.
		\end{align*}	
	\end{enumerate}
This completes the induction and the proof.
\end{proof}\noindent

\end{appendices}

\end{document}